\theoremstyle{plain}
\newtheorem{thm}{Theorem}[]
\newtheorem{lem}[thm]{Lemma}
\newtheorem{cor}[thm]{Corollary}
\newtheorem{prop}[thm]{Proposition}
\theoremstyle{definition}
\newtheorem{rem}[thm]{Remark}
\numberwithin{equation}{section}
\newcommand{\cC}{\mathcal C}
\newcommand{\C}{\mathbb C}
\newcommand{\D}{\mathbb D}
\newcommand{\R}{\mathbb R}
\newcommand{\cB}{\mathcal B}
\newcommand{\cS}{\mathcal S}
\newcommand{\cH}{\mathcal H}
\newcommand{\cM}{\mathcal M}
\newcommand{\zbar}{\overline{z}}
\renewcommand{\Re}{\operatorname{Re}}
\newcommand{\loc}{\mathrm{loc}}
\let\swap=\phi
\let\phi=\varphi
\let\varphi=\swap
\let\swap=\epsilon
\let\epsilon=\varepsilon
\let\varepsilon=\swap
\let\swap=\leq
\let\leq=\leqslant
\let\leqslant=\swap
\let\swap=\geq
\let\geq=\geqslant
\let\geqslant=\swap
\newcommand{\supp}{\mathrm{supp}}
\newcommand{\dist}{\mathrm{dist}}
\newcommand{\zz}{\mathbb{Z}}
\newcommand{\rr}{\mathbb{R}}
\newcommand{\cc}{\mathbb{C}}
\newcommand{\bz}{\bar{z}}
\newcommand{\hhh}{\mathcal{H}}
\renewcommand\Re{\operatorname{Re}}
\renewcommand\div{\operatorname{div}}
\renewcommand{\bar}[1]{\overline{#1}}
\title[Improved H\"older regularity for strongly elliptic PDEs]{Improved H\"older regularity for \\ strongly elliptic PDEs}
\date{}
\author[K. Astala]{Kari Astala}
\address{K. Astala, Aalto University, Department of Mathematics and Systems Analysis, P.O. Box 11100, FI-00076 Aalto, Finland}
\email{kari.astala@aalto.fi}
\author[A. Clop]{Albert Clop}
\address{A. Clop, Department of Mathematics, Universitat Aut\`onoma de Barcelona, 08193 Bellaterra (Barcelona), Catalonia}
\curraddr{}
\email{albertcp@mat.uab.cat}
\author[D. Faraco]{Daniel Faraco}
\address{D. Faraco, Department of Mathematics, Universidad Aut\'onoma de Madrid, 28049 Ma\-drid, 
Spain; ICMAT CSIC-UAM-UCM-UC3M, 28049 Madrid, Spain}
\curraddr{}
\email{daniel.faraco@uam.es}
\author[J. J\"a\"askel\"ainen]{Jarmo  J\"a\"askel\"ainen}
\address{J. J\"a\"askel\"ainen, University of Jyv\"askyl\"a, Department of Mathematics and Statistics, P.O. Box 35 (MaD), Fi-40014 University of Jyv\"askyl\"a}
\curraddr{}
\email{jarmo.t.jaaskelainen@jyu.fi}
\author[A. Koski]{Aleksis Koski}
\address{A. Koski, University of Jyv\"askyl\"a, Department of Mathematics and Statistics, P.O. Box 35 (MaD), Fi-40014 University of Jyv\"askyl\"a}
\curraddr{}
\email{aleksis.t.koski@jyu.fi}
\thanks{K.A. was supported by the Academy of Finland project SA-13316965  and ERC projects 301179 and 834728. A.C. was partially supported by projects MTM2016-75390 and MTM2016-81703-ERC (Spanish Govt.) and 2017-SGR-395 (Catalan Govt.). D.F. was supported by MTM2014-57769-1-P,  and MTM2017-85934-C3 from the Ministerio de Ciencia e Innovaci\'on (MCINN), by ICMAT Severo Ochoa projects SEV-2011-0087 and  SEV-2015-0554 (MINECO), and by the ERC projects 301179 and 834728. 
J.J. was partially supported by the Academy of Finland (no. 318636). A.K. was supported by the ERC 307023 and the Academy of Finland (no. 315767).}
\keywords{Quasiconformal mappings, Schauder estimates, Leray-Lions equation, Beltrami equation}
\subjclass[2010]{30C62, 35J60, 35B65}
\begin{document}

\frenchspacing

\begin{abstract} We establish  surprising improved Schauder regularity properties for solutions to  the Leray-Lions divergence type equation  in the plane. The results are achieved by studying  the nonlinear Beltrami equation and making use of special new relations between these two equations. In particular, we show 
 that  solutions to an autonomous Beltrami equation enjoy a quantitative improved degree of H\"older regularity, higher than what is given by the classical exponent $1/K$.

\end{abstract}

\maketitle

\section{Introduction} The theme of this paper is the interaction 
of two elliptic partial differential equations in two dimensions,  the Leray-Lions equation
\begin{equation}\label{diverAuto}
\div A(z, \nabla u) = 0 
\end{equation}
and the nonlinear Beltrami equation 
\begin{equation}\label{bel1}
f_{\zbar} = \cH(z, f_z),
\end{equation}
 as well their inhomogeneous versions, see \eqref{diverEq1} and \eqref{bel2} below. 
  In particular, our aim is to look at the equations and their relations from a new and novel perspective. Secondly, we establish  an unexpected regularity result for the autonomous nonlinear Beltrami equation. As a consequence,  this will lead to improved regularity results for the nonlinear Leray-Lions equations, as well as for  the non-autonomous and inhomogeneous versions of both equations. 
  \smallskip
  
  That the  above two equations are related has been evident for long, see for instance, \cite{Bers-Nirenberg}, \cite{Bojarski}, \cite{Caccioppoli}, \cite{Finn-Serrin},  \cite{Iwaniec-Sbordone},  \cite{Vekua}, and this has been widely used to apply quasiconformal methods to planar elliptic equations,  see for example  the monograph \cite{AIM}. 
   
  However, the point of the  present paper is to give a  new perspective  by finding a sharp ellipticity condition for the structural function  $A(z,\xi)$ under which there is an  equivalence, even up to the exact ellipticity bounds, between the nonlinear equations \eqref{diverAuto} and \eqref{bel1}.  In  the  linear case the precise ellipticity relation was first proved in   \cite{Leonetti-Nesi}. 
  
\smallskip

For the nonlinear Beltrami equation \eqref{bel1}, the structural field $\cH(z,\zeta)$ is assumed measurable in $z \in \Omega$  while 
the ellipticity 
 is quantified by requiring the uniform Lipschitz bound
\begin{equation}\label{lip31}
| \cH(z, \zeta) - \cH(z, \eta)| \leq k \,|\zeta - \eta|, \qquad \zeta, \eta \in \C, 
\end{equation}
where $0 \leq k < 1$ is a fixed constant. One naturally assumes that $\cH(z,0) \equiv 0$ so that the equation becomes homogeneous. The conditions guarantee that $W^{1,2}_{\loc}$-solutions of \eqref{bel1} are quasiregular. The non-linear Beltrami equation appears also in other scenarios such as study of differential inclusions for gradient maps \cite{tartar}.

For the case of the autonomous Beltrami equation
\begin{equation}\label{autbel1}
f_{\zbar} = \cH(f_z),
\end{equation}
it is known  that the solutions have $K$-quasiregular directional derivatives with $K = \frac{1+k}{1-k}$, \cite{Faraco}, \cite{Faraco-Kristensen}, \cite{Hinkkanen-Martin} \cite{Sverak}, \cite{ACFJK},  and thus by the classical theorem of Morrey \cite{Morrey}, \cite[Section 3.10]{AIM} the solutions lie in $C^{1,1/K}_{\loc}$.   However, surprisingly it turns out their regularity  can be improved even further. Developing this phenomenon leads  to our first key result.

\begin{thm}\label{ImprovedAuto} Under the ellipticity assumption \eqref{lip31},  solutions  $f$ to the autonomous Beltrami equation \eqref{autbel1} belong to $C^{1,\alpha_K}_{\loc}$, where for $K = \frac{1+k}{1-k}$,
\begin{equation}\label{expo}
\alpha_K = \frac{1-k}{1+\frac{k}{2} \max\{1,2-4k\}} = \min\left\{  \frac{4}{3K+1},  \frac{K+1}{3K-1}\right\}
\quad > \frac{1}{K}. 
\end{equation}
\end{thm}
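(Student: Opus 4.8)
The plan is to exploit the fact, recalled in the introduction, that for a solution $f$ of the autonomous equation \eqref{autbel1} every directional derivative $g = \partial_v f$ (for a fixed direction $v \in \C$) is itself a $K$-quasiregular mapping. Indeed, differentiating \eqref{autbel1} along $v$ and using the Lipschitz bound \eqref{lip31} one finds that $g$ satisfies a \emph{linear} Beltrami equation $g_{\bar z} = \mu\, g_z$ with $\|\mu\|_\infty \leq k$, where the coefficient $\mu = \mu_v$ depends on the difference quotients of $\cH$ along $f_z$. The key new information, not used in the classical $C^{1,1/K}$ bound, is that $g = f_z$ and $g = f_{\bar z}$ (equivalently the two complex derivatives) are \emph{linked}: the coefficient $\mu$ is the same geometric object for both, and moreover $f_{\bar z} = \cH(f_z)$ ties the two together pointwise. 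So the first step is to write down carefully this linear Beltrami system for the pair $(f_z, f_{\bar z})$ and record that $f_z$ omits (or nearly omits) a disc-like region in the $\zeta$-plane dictated by the Lipschitz graph $\zeta \mapsto \cH(\zeta)$.

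Second, I would translate the desired gain in H\"older exponent into a gain in the integrability exponent of the gradient of a quasiregular map via the Astala area distortion / higher integrability machinery: a $K$-quasiregular map lies in $W^{1,p}_{\loc}$ for all $p < \frac{2K}{K-1}$, and correspondingly its "primitive" gains H\"older regularity $C^{0,1-2/p}$. The classical exponent $1/K$ corresponds exactly to the borderline $p = \frac{2K}{K-1}$. To beat it, one must show that $f_z$ has \emph{strictly better} integrability than a generic $K$-quasiregular map, using the extra constraint coming from the autonomy. The natural mechanism is a Stoilow-type factorization: write $f_z = F \circ \chi$ where $\chi$ is a $K$-quasiconformal homeomorphism and $F$ is holomorphic; then the constraint that $f_{\bar z} = \cH(f_z)$ forces $F$ (hence $f_z$) to take values avoiding a nontrivial set, or forces $\chi$ to have better-than-generic distortion on the relevant level sets. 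I would combine this with the sharp Astala bounds on how $K$-quasiconformal maps distort Hausdorff measure and with Gehring–Reich / Eremenko–Hamilton type estimates on the decay of $\|D\chi\|_p$.

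Third, to land on the precise exponent $\alpha_K = \min\{\frac{4}{3K+1}, \frac{K+1}{3K-1}\}$ I expect one needs a two-regime analysis matching the two terms in the minimum (equivalently the $\max\{1, 2-4k\}$ in the first expression for $\alpha_K$): for $K$ close to $1$ (small $k$, specifically $k < 1/2$, i.e. $2-4k > 1$) one branch is active, and for larger $K$ the other branch takes over; the crossover $2 - 4k = 1$ is $k = 1/2$, $K = 3$, and one checks $\frac{4}{3\cdot 3+1} = \frac{3+1}{3\cdot3-1} = \frac{2}{5}$ there, so the two formulas agree, confirming the $\min$ is continuous. Concretely, I would set up an optimization: parametrize the worst-case configuration (the extremal Beltrami coefficient, which one expects to be a radial-stretch-type map $z \mapsto z|z|^{1/K-1}$ composed with a rotation-shear adapted to $\cH$), compute the resulting modulus of continuity of $f$, and optimize over the free geometric parameter; the two regimes of the optimum give the two competing exponents.

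The main obstacle, I expect, is the third step — pinning down the \emph{sharp} constant. Getting \emph{some} improvement over $1/K$ is relatively soft once one exploits that $f_z$ and $f_{\bar z}$ solve a coupled rather than a free linear Beltrami equation. But extracting the exact value $\alpha_K$ requires identifying the true extremal solutions and verifying the estimate is saturated by them; this is where a clever use of the holomorphic factorization together with the precise (and known to be sharp) area-distortion exponents of Astala must be orchestrated just right, and where an a priori $L^p$ estimate with the correct dependence on $k$ — presumably proved by a Fourier/commutator argument on the Beurling transform or by a Burkholder-functional/quasiconvexity argument — has to be inserted. I would therefore expect the body of the proof to consist of (i) the coupled linear Beltrami reduction, (ii) a sharp $L^p$ bound for $\nabla f$ with $p$ explicitly larger than $\frac{2K}{K-1}$, proved via the structure of \eqref{autbel1}, and (iii) the Morrey–Sobolev embedding converting that $p$ into $\alpha_K = 1 - 2/p$, with the two-regime optimization producing the stated minimum.
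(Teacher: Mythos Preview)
Your proposal has a genuine gap: the central mechanism you outline --- passing through \emph{improved higher integrability} of $Df_z$ (i.e., $f\in W^{2,p}_{\loc}$ for some explicit $p>\tfrac{2K}{K-1}$) and then invoking Morrey--Sobolev --- is precisely the open problem flagged at the end of the introduction. The paper does \emph{not} know whether such improved integrability holds; it even points out that in related elliptic settings improved H\"older regularity can occur without any gain in Sobolev regularity. So routing the proof through ``sharp $L^p$ bounds for $\nabla f$ with $p$ explicitly larger than $\tfrac{2K}{K-1}$'' is, as far as anyone knows, a harder statement than the theorem itself. Similarly, your Step~3 plan to identify $\alpha_K$ as the exponent saturated by an extremal solution cannot work as stated: the paper makes clear that $\alpha_K$ is \emph{not} believed to be sharp (the conjectured optimal exponent is $\tfrac{3}{2K+1}$, realized by $z^2|z|^{\frac{3}{2K+1}-1}$), so there is no extremal configuration to optimize over that lands on $\alpha_K$. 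The Stoilow/omitted-values heuristic in Step~2 is also vague: $f_z$ is $K$-quasiregular, but nothing in the autonomy forces it to omit a set large enough to improve integrability or H\"older exponent via known area-distortion bounds.

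The paper's actual argument is a direct refinement of Morrey's proof of $C^{1/K}$-regularity, bypassing integrability altogether. It works at the level of second derivatives: setting $\mu=f_{z\bar z}/f_{zz}$ and $\nu=f_{\bar z\bar z}/f_{zz}$, the $K$-quasiregularity of \emph{all} directional derivatives (not just $\partial_x f$ and $\partial_y f$ separately) yields the pointwise algebraic constraints $|\nu|\le k+(k-1)|\mu|$ and $|\nu-\mu^2|\le \tfrac{1}{k}(k^2-|\mu|^2)$. One then studies $J(r)=\int_{\D_r} j_f$ with $j_f=kJ_{f_z}+J_{f_{\bar z}}$, and proves the differential inequality $J'(r)\ge \tfrac{2\alpha_K}{r}J(r)$ via two ingredients: an \emph{improved Poincar\'e inequality on circles} (Fourier series on $S(r)$, where the key cancellation is the identity $A_{-1}=B_1$ between the Fourier coefficients of $f_z$ and $f_{\bar z}$, coming from $\int_{S(r)}\partial_\phi f=0$), and a pointwise bound $|\partial_\phi f_z|^2\le \tfrac{|z|^2}{k(1-k)}j_f$ derived from the $\mu,\nu$ relations. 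The two regimes in $\alpha_K$ arise from the $\max\{\tfrac12,\,1-2k\}$ in the Poincar\'e lemma, not from any extremal-map optimization. This is the step you are missing: the coupling you correctly identify between $f_z$ and $f_{\bar z}$ is exploited through the Fourier identity $A_{-1}=B_1$ and the second-order distortion inequalities, not through integrability or factorization.
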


We do not know whether the bound  of Theorem \ref{ImprovedAuto} is sharp. On the other hand, cf. Remark  \ref{HolderRemark},
\[f_0:\cc \to \cc, \quad f_0(z) = z^2|z|^{\frac{3}{2K + 1} - 1}\]
solves an autonomous equation with  ellipticity constant $k=\frac{K-1}{K+1}$, which
 shows that   $C^{1,\alpha}_{\loc}$-regularity can hold only for  $\alpha \leq \frac{3}{2K + 1}$. Indeed, we conjecture this 
 to be the optimal $C^{1,\alpha}_{\loc}$-regularity for general solutions to \eqref{autbel1}. It should also be mentioned that for $C^1$-regular fields $\cH$, the solutions to the autonomous Beltrami equation lie in $C^{1,\beta}$ for all $\beta < 1$, see Theorem 1.3 in the authors' previous paper \cite{ACFJK}.
  \smallskip

 The Leray-Lions  equation   \eqref{diverAuto} has been associated, since the original work  \cite{L-L}, to various notions of monotonicity. A quantified version of this was studied by Kovalev 
\cite{Kovalev}, who considered the $\delta$-monotonous mappings, defined by 
\begin{equation}\label{monot}
 \langle A (z, \xi_1) - A(z,\xi_2), \xi_1 - \xi_2 \rangle \geq \delta |A (z,\xi_1) - A(z,\xi_2)| \,|\xi_1 - \xi_2|, \qquad \xi_i \in \R^2.
\end{equation}
On the other hand, the requirement of uniform ellipticity is typically expressed in terms of the bounds
\begin{equation}\label{ellip}
|\xi|^2 +   |A (z,\xi)|^2 \leq \left( K + \frac{1}{K} \right)    \langle A (z,\xi) , \xi  \rangle, \qquad \xi \in \R^2.
\end{equation}
For $A(z,\xi)$ that is linear in the variable $\xi$, i.e., $A(z,\xi) = A(z)\xi$, with $\det A(z) =1$ the bounds \eqref{monot} and \eqref{ellip} are equivalent for 
$\delta =  \frac{2K}{K^2 +1}$.
In general the notions are different since $\delta$-monotonicity is scale invariant while \eqref{ellip}  is not. 
 
Combining the above notions leads to the following natural requirements for the structural function $A (z,\xi)$,
 \begin{equation}\label{diverEllipCondition}
 \aligned
 &|\xi_1 - \xi_2|^2 + |A(z,\xi_1) - A(z,\xi_2)|^2 \leq \left(K + \frac1K\right) \left\langle \xi_1 - \xi_2 , A(z,\xi_1) - A(z,\xi_2)\right \rangle,
\\
& A(z,0) \equiv 0,
\endaligned
\end{equation}
for $\xi_1, \xi_2 \in \R^2$.
\smallskip

A second key point, shown in Theorem \ref{equivalence}  below, is that under this requirement the Leray-Lions equation \eqref{diverAuto} is equivalent to \eqref{bel1}, including the exact equivalence between the respective ellipticity bounds in \eqref{lip31} and \eqref{diverEllipCondition}.  From this equivalence we, for instance, have  

\begin{cor} \label{LLautonmous} 
Suppose $\Omega \subset \R^2$ is a bounded simply connected domain. If $u \in W^{1, 2}_{\loc}(\Omega, \R)$ is a distributional solution to the Leray-Lions equation $$\div A(\nabla u) = 0,$$ where $A= A(\xi)$ satisfies the strong ellipticity condition \eqref{diverEllipCondition}, then $u$ belongs to the class $C^{1,\alpha_K}_{\loc}(\Omega, \R)$, with $\alpha_K$  defined in \eqref{expo}.
\end{cor}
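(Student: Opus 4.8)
The plan is to reduce Corollary~\ref{LLautonmous} to the autonomous Beltrami case already settled in Theorem~\ref{ImprovedAuto}, using the equivalence of Theorem~\ref{equivalence} to pass between the two equations without losing track of the ellipticity constant. First I would build the complex companion of $u$: since $\Omega$ is simply connected and $u \in W^{1,2}_{\loc}(\Omega,\R)$ solves $\div A(\nabla u) = 0$ distributionally, the field $A(\nabla u) \in L^2_{\loc}$ is divergence free, hence equals $(-\partial_2 v,\partial_1 v)$ for a stream function $v \in W^{1,2}_{\loc}(\Omega,\R)$, unique up to an additive constant. Setting $f = u + i v$, the strong ellipticity condition \eqref{diverEllipCondition} on $A$ is, by Theorem~\ref{equivalence}, equivalent to $f$ being a $W^{1,2}_{\loc}$ solution of a nonlinear Beltrami equation $f_{\zbar} = \cH(z,f_z)$ whose structural field $\cH$ satisfies the Lipschitz bound \eqref{lip31} with exactly the same $k$ (equivalently the same $K = \tfrac{1+k}{1-k}$) and with $\cH(z,0)\equiv 0$, the latter since $A(0)=0$. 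Because $A=A(\xi)$ carries no $z$-dependence, the induced $\cH$ is autonomous, so $f$ solves \eqref{autbel1}; in particular $f$ is $K$-quasiregular.

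Next I would simply apply Theorem~\ref{ImprovedAuto} to this $f$, obtaining $f \in C^{1,\alpha_K}_{\loc}(\Omega,\C)$ with $\alpha_K$ as in \eqref{expo}. It then remains to transfer the conclusion back to $u$: if $f \in C^{1,\alpha_K}_{\loc}$ then $f_z, f_{\zbar} \in C^{0,\alpha_K}_{\loc}$, hence $\nabla f \in C^{0,\alpha_K}_{\loc}$, and taking real parts gives $\nabla u = \nabla\Re f \in C^{0,\alpha_K}_{\loc}(\Omega,\R^2)$, i.e.\ $u \in C^{1,\alpha_K}_{\loc}(\Omega,\R)$, as claimed.

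The step I expect to require the most care is the first one, namely making the equivalence of Theorem~\ref{equivalence} rigorous at the low regularity we start from: one must check that $f = u + i v$ is genuinely a $W^{1,2}_{\loc}$ solution of the Beltrami equation and that the induced $\cH$ is Lipschitz with constant $k<1$ fixing the origin, so that $f$ is quasiregular and the qualitative theory underlying Theorem~\ref{ImprovedAuto} (continuity of the gradient, Sto\"ilow-type structure) applies. Granting Theorem~\ref{equivalence}, this is automatic, and the remaining ingredients --- existence of the stream function on a simply connected domain and the pointwise dictionary relating $(\nabla u,\nabla v)$ to $(f_z,f_{\zbar})$ --- are routine, so the three steps fit together with no additional estimates.
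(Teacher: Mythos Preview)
Your proposal is correct and follows exactly the route the paper intends: the corollary is stated without a separate proof, as an immediate consequence of combining Theorem~\ref{equivalence} (which produces from $u$ a $W^{1,2}_{\loc}$ solution $f=u+iv$ of a $k$-Lipschitz Beltrami equation, autonomous whenever $A$ is, by the Remark following that theorem) with Theorem~\ref{ImprovedAuto}. Your write-up simply spells out this two-step reduction and the trivial passage back to $u=\Re f$.
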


Accordingly, cf. discussion after Theorem \ref{ImprovedAuto},  we  expect that  the optimal regularity for solutions to the Leray-Lions equations with the structure \eqref{diverEllipCondition} is $C^{1,\alpha}_{\loc}$, where $\alpha = \frac{3}{2K + 1}$.  

\medskip

 When the structure function $A(\xi)$ is  assumed to be $\delta$-monotone, $\delta =  \frac{2K}{K^2 +1}$, there is another route to the improved gradient H\"older regularity for solutions to the autonomous Leray-Lions differential equations: As shown in \cite[Section 16.4]{AIM} the complex gradient of the solution is $K^2$-quasiregular and then \cite{Baernstein-Kovalev} gives an improved $C^{1,\alpha}_{\loc}$-regularity. However, the regularity exponent obtained in this way is smaller than  \eqref{expo}.
 See also \cite{Martin} for yet another recent higher regularity result for the autonomous Beltrami system. 

\medskip

It is natural to use these methods for non-autonomous and inhomogeneous equations as well, both for the nonlinear Beltrami equations and the 
Leray-Lions equations. Again the results turn out to be equivalent, including the quantitative bounds. We will concentrate on Schauder type estimates. 

It was shown in the  authors' previous article \cite{ACFJK}  that in Schauder estimates for the  nonlinear Beltrami equation both  smoothness in the variable $z$ as well as the ellipticity constant of the structural field come into play. 
The best smoothness one can obtain is that derivatives of the solutions lie in some H\"older class. Using the above improved $C^{1,\alpha}$-regularity we will then be able to establish the third key result of our paper, which is stated in the form of the following theorem. 
\begin{thm}\label{SchauderforA}
Let $\Omega \subset \R^2$ be simply connected bounded domain. Suppose $u \in W^{1, 2}_{\loc}(\Omega, \R)$ is a distributional solution to  the Leray-Lions equation 
\begin{equation}\label{diverEq1}
\div A(z, \nabla u) = \div g, 
\end{equation}
 where $A$ satisfies the strong ellipticity condition \eqref{diverEllipCondition} and $A$ is $\alpha$-H\"older continuous with respect to the variable $z$, that is, for $z_1, z_2 \in \Omega$ and $\xi \in \R^2$
$$
|A(z_1, \xi) - A(z_2, \xi)| \leq C\,|z_1 - z_2|^\alpha |\xi|.
$$
Assume also that $g$ lies in the class $C^{\alpha}(\Omega,\rr)$. Then $u$ belongs to the class $C^{1,\gamma}_{\loc}$, whenever
$\gamma \leq \alpha$  and $\gamma < \alpha_K  =\min\left\{  \frac{4}{3K+1},  \frac{K+1}{3K-1} \right\}.$
\end{thm}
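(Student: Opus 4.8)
The plan is to reduce the Leray–Lions equation \eqref{diverEq1} to a non-autonomous inhomogeneous nonlinear Beltrami equation via the equivalence announced in Theorem \ref{equivalence}, and then to combine the improved H\"older regularity of Theorem \ref{ImprovedAuto} with a perturbation/freezing argument in the spirit of the authors' earlier Schauder estimates in \cite{ACFJK}. First I would record that, by simple connectedness of $\Omega$, a $W^{1,2}_{\loc}$ distributional solution $u$ of $\div A(z,\nabla u)=\div g$ gives rise (via the usual stream-function construction, taking the Hodge conjugate of $A(z,\nabla u)-g$) to a complex function $f=u+iv$ which solves an equation of the form $f_{\zbar}=\cH(z,f_z)+h(z)$, where $\cH$ inherits the Lipschitz bound \eqref{lip31} with the same $k$ (this is exactly the exact-ellipticity content of Theorem \ref{equivalence} applied under \eqref{diverEllipCondition}), where $\cH(\cdot,\zeta)$ is $\alpha$-H\"older in $z$ with constant controlled by the $\alpha$-H\"older constant of $A$, and where $h\in C^\alpha_{\loc}$ is built from $g$. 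Thus it suffices to prove the corresponding $C^{1,\gamma}_{\loc}$ statement for solutions of $f_{\zbar}=\cH(z,f_z)+h(z)$.

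Next I would set up the standard localization and freezing-of-coefficients scheme. Fix a point $z_0$ and work on small disks $\D(z_0,r)$. Write $f_{\zbar}=\cH(z_0,f_z)+\big(\cH(z,f_z)-\cH(z_0,f_z)\big)+h(z)$; the second term is $O(r^\alpha\,|f_z|)$ and the third is $O(r^\alpha)+ $ a constant, so after subtracting a suitable affine function the right-hand side is an autonomous Beltrami operator plus a source of size $\lesssim r^\alpha(\|f_z\|_{L^\infty}+1)$. For the autonomous frozen equation $g_{\zbar}=\cH(z_0,g_z)$ Theorem \ref{ImprovedAuto} gives the decay estimate
\begin{equation*}
\fint_{\D(z_0,\rho)}|g_z-(g_z)_{\D(z_0,\rho)}|\,dm \;\lesssim\; \Big(\tfrac{\rho}{r}\Big)^{\alpha_K}\,\fint_{\D(z_0,r)}|g_z-(g_z)_{\D(z_0,r)}|\,dm ,\qquad 0<\rho\leq r,
\end{equation*}
i.e. a Campanato-type excess decay with exponent $\alpha_K$. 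Comparing $f$ with the solution $g$ of the frozen equation having the same boundary data on $\D(z_0,r)$, and using the a priori $W^{1,p}$/BMO-type bounds for the inhomogeneous nonlinear Beltrami equation (so that the source contributes an error $\lesssim r^{\alpha}$ to the excess), one obtains the iteration inequality
\begin{equation*}
\Phi(z_0,\rho)\;\leq\; C\Big(\tfrac{\rho}{r}\Big)^{\alpha_K}\Phi(z_0,r)+C\,r^{\alpha},
\qquad \Phi(z_0,r):=\fint_{\D(z_0,r)}\big|f_z-(f_z)_{\D(z_0,r)}\big|\,dm .
\end{equation*}
A routine iteration lemma (Campanato/Giaquinta) then yields $\Phi(z_0,r)\lesssim r^{\gamma}$ for every $\gamma<\alpha_K$ with $\gamma\le\alpha$, uniformly in $z_0$, which by the Campanato characterization of H\"older spaces gives $f_z\in C^{\gamma}_{\loc}$, hence $u\in C^{1,\gamma}_{\loc}$.

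The main obstacle is making the comparison step quantitative at the level needed: one must show that the difference $f-g$ (solution of the inhomogeneous equation versus the frozen autonomous one) has gradient small in the averaged $L^1$ sense by a power of $r$ \emph{larger than} $\alpha_K$ — or at least by $r^\alpha$ — despite $\cH$ being merely Lipschitz in $\zeta$ and merely measurable-then-H\"older in $z$. This requires a careful use of the nonlinear structure: subtracting the two equations produces a linear Beltrami-type equation for $f-g$ with measurable coefficients of ellipticity $k$ (here one uses that $\cH(z_0,\cdot)$ is $k$-Lipschitz to linearize the difference), plus the explicit source terms estimated above, and then Caccioppoli together with higher-integrability ($W^{1,p}$, $p>2$) for such linear equations closes the gap. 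A secondary technical point is that the frozen-equation decay estimate from Theorem \ref{ImprovedAuto} must be applied in scale-invariant averaged form and combined with the boundedness of $f_z$ (which itself follows from Corollary \ref{LLautonmous} applied at the frozen level, or directly from the quasiregularity of derivatives), so that the constants in the iteration do not degenerate; once these are in place the exponent that survives is precisely $\min\{\alpha,\alpha_K^-\}$ as claimed.
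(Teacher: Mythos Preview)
Your overall strategy matches the paper's: reduce to an inhomogeneous nonlinear Beltrami equation via Theorem~\ref{equivalence} (with Proposition~\ref{Holderperiytyy} supplying the H\"older-in-$z$ structure of the resulting field), then run a freezing-of-coefficients/Campanato argument using the improved autonomous decay from Theorem~\ref{ImprovedAuto}. The paper packages the second step as the standalone Theorem~\ref{SchauderforH} (proved as Theorem~\ref{schauder}) and then simply cites it; you have inlined that proof. One cosmetic difference: to put the Beltrami equation exactly in the form required by Theorem~\ref{SchauderforH}, the paper works with the intermediate equation \eqref{HauxEquation} and substitutes $f^*=f+\overline{\mathcal{C}g}$ so that the structure field becomes $\cH^*(z,\overline{\,\cdot\,})$ and the inhomogeneity becomes $g+\overline{\mathcal{S}g}\in C^\alpha$. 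Without this trick the field you obtain satisfies only $|\cH(z_1,\zeta)-\cH(z_2,\zeta)|\lesssim|z_1-z_2|^\alpha(|\zeta|+1)$ rather than the homogeneous form; this is harmless for the freezing estimate but worth recording.

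There is, however, a genuine gap in how you close the iteration. Your inequality $\Phi(z_0,\rho)\le C(\rho/r)^{\alpha_K}\Phi(z_0,r)+Cr^\alpha$ requires, through the freezing error $\lesssim r^\alpha\|f_z\|_{L^2(\D_r)}$, an a priori Morrey-type control on $f_z$. You propose to get $f_z\in L^\infty$ from Corollary~\ref{LLautonmous} ``applied at the frozen level'' or from ``the quasiregularity of derivatives'', but both of those facts hold only for \emph{autonomous} equations and say nothing about $f$ itself; invoking them here is circular. The paper closes this gap by a two-step bootstrap inside the proof of Theorem~\ref{schauder}: first Lemma~\ref{basicIV} combined with Lemma~\ref{growth} yields $f\in C^\beta_{\loc}$ for every $\beta<1$; then an inhomogeneous Caccioppoli inequality for the nonlinear Beltrami equation (Theorem~\ref{vmolocalestimate} in the appendix, valid under a VMO hypothesis on $\cH$) converts this into $\|f_z\|_{L^2(\D_R)}\lesssim R^\beta$, which is precisely what feeds into the Campanato step (Lemma~\ref{basicII}) and lets the iteration reach the exponent $\min\{\alpha,\alpha_K^-\}$. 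Your sketch omits both the preliminary $C^\beta$ step and the inhomogeneous Caccioppoli estimate, and without them the argument does not close.
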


In fact, in Theorem \ref{SchauderforA} one can replace $\alpha_K$ by $\beta_K$, the largest exponent such that a solution to the autonomous Beltrami equation \eqref{autbel1} always lies in the class $C^{1,\beta_K}_{\loc}$.

In terms of the nonlinear Beltrami equation one has the following  alternative formulation of the theorem. This theorem is a direct strengthening of the main result in \cite{ACFJK}. As a matter of fact one of our motivations to study Schauder estimates in this context was the relation between nonlinear Beltrami equations and nonlinear families of quasiconformal mappings discovered in \cite{ACFJ}, see also \cite{Hinkkanen-Martin}. The results of \cite{ACFJ} are hence also sharpened but we omit the details. 

\begin{thm}\label{SchauderforH} Let $\Omega \subset \C$ be a domain, $0 < \alpha < 1$ and $k < 1$. Suppose that the structure field $\cH(z,\zeta)$ satisfies the assumptions
\begin{equation*}
\aligned
&|\cH(z_1, \zeta_1) - \cH(z_2, \zeta_2)| \leq C\,|z_1 - z_2|^{\alpha}\bigl(|\zeta_1| + |\zeta_2|\bigr) + k\,|\zeta_1 - \zeta_2|,\\ 
&\cH(z_1, 0) \equiv 0,
\endaligned
\end{equation*}
for every $z_1, z_2 \in \Omega$ and $\zeta_1, \zeta_2 \in \C$. 
Then for all $G \in C^{\alpha}(\Omega,\cc)$, every solution $f : \Omega \to \C$ of the inhomogeneous Beltrami equation \[f_{\zbar} = \cH(z, f_z) + G \qquad \text{a.e. $z\in \Omega$}\] belongs to the regularity class $C^{1,\gamma}_{\loc}$, where $\gamma$ is any number satisfying
$\gamma \leq \alpha$  and $\gamma < \alpha_K.$\end{thm}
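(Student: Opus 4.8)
The plan is to run the classical Schauder scheme --- freezing of the coefficients followed by a Campanato iteration --- feeding the improved autonomous exponent of Theorem~\ref{ImprovedAuto} into precisely the step where the proof of \cite{ACFJK} used the weaker exponent $1/K$; so the theorem is really the argument of \cite{ACFJK} with one input upgraded. Since the assertion is local, fix $\Omega'\Subset\Omega$. A $W^{1,2}_{\loc}$ solution $f$ is quasiregular away from the inhomogeneity, so the main result of \cite{ACFJK} (or a short preliminary bootstrap from the Caccioppoli inequality) already gives $f\in C^{1,\gamma_0}_{\loc}(\Omega)$ for some $\gamma_0=\gamma_0(k,\alpha)>0$, whence $M:=\|Df\|_{L^\infty(\Omega')}<\infty$; the task is to raise $\gamma_0$ to an arbitrary $\gamma'$ with $\gamma'\le\alpha$, $\gamma'<\alpha_K$. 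I would work with the excess
\begin{equation*}
\omega_h(z_0,r)\;=\;\inf_{\ell}\Bigl(\,|B(z_0,r)|^{-1}\!\!\int_{B(z_0,r)}|Dh-D\ell|^2\,\Bigr)^{1/2},
\end{equation*}
the infimum over affine $\ell\colon\C\to\C$, and recall that $\omega_f(z_0,r)\le Cr^{\gamma'}$ for all small $r$, uniformly in $z_0\in\Omega'$, is equivalent (Campanato) to $f\in C^{1,\gamma'}_{\loc}(\Omega)$.

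Fix $z_0$ and a disc $B=B(z_0,r)\Subset\Omega$, and let $g\in W^{1,2}(B)$ solve the \emph{frozen autonomous} equation $g_{\zbar}=\cH(z_0,g_z)+G(z_0)$ with $g=f$ on $\partial B$; such a $g$ exists since $\cH(z_0,\cdot)$ is $k$-Lipschitz, by the solvability theory for the Dirichlet problem of nonlinear Beltrami / Leray--Lions equations (cf.\ \cite{ACFJ},\ \cite{AIM}). Then $g-G(z_0)\,\zbar$ solves the genuinely autonomous equation $v_{\zbar}=\cH(z_0,v_z)$, whose field is $k$-Lipschitz and vanishes at $0$, so Theorem~\ref{ImprovedAuto} gives $g\in C^{1,\alpha_K}_{\loc}(B)$. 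What the iteration actually needs is the scale-invariant form: for every $\gamma<\alpha_K$ a constant $C_1=C_1(k,\gamma)$ such that every solution of every autonomous $k$-Lipschitz Beltrami equation satisfies
\begin{equation*}
\omega_g(z_0,\rho)\;\le\;C_1\,(\rho/r)^{\gamma}\,\omega_g(z_0,r),\qquad 0<\rho\le r/2 .
\end{equation*}
After rescaling $z\mapsto(z-z_0)/r$ this amounts to the uniform interior estimate $[Dg]_{C^{0,\gamma}(\frac12 B)}\,r^{\gamma}\lesssim\omega_g(z_0,r)$; this is the quantitative counterpart of Theorem~\ref{ImprovedAuto} and, since the directional derivatives of $g$ are $K$-quasiregular, follows either from the proof of that theorem or from a standard normal-families compactness argument reducing to a limiting autonomous solution. \textbf{This upgrade of the qualitative Theorem~\ref{ImprovedAuto} to a uniform decay estimate is the step I expect to be the main obstacle}; everything else is routine.

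Next I would compare $f$ with $g$. Writing $\cH(z_0,f_z)-\cH(z_0,g_z)=\mu\,(f-g)_z$ with $\mu$ measurable, $|\mu|\le k$ a.e., the difference $w=f-g\in W^{1,2}_0(B)$ satisfies
\begin{equation*}
w_{\zbar}\;=\;\mu\,w_z+E,\qquad E:=\bigl(\cH(z,f_z)-\cH(z_0,f_z)\bigr)+\bigl(G-G(z_0)\bigr).
\end{equation*}
The H\"older hypotheses on $\cH$ (used with $\zeta_1=\zeta_2=f_z$) and on $G$, with $|Df|\le M$, give $|E(z)|\le C(M+\|G\|_{C^\alpha})\,r^{\alpha}$ on $B$, hence $\|E\|_{L^2(B)}\le C(M+\|G\|_{C^\alpha})\,r^{1+\alpha}$. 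Extending $w$ by zero to $\C$ and using $w_z=\cB w_{\zbar}$ for the Beurling transform, the operator $I-\mu\cB$ is invertible on $L^2(\C)$ with norm $\le(1-k)^{-1}$, giving
\begin{equation*}
\|Dw\|_{L^2(B)}\;\le\;\frac{C}{1-k}\,\|E\|_{L^2(B)}\;\le\;C(M+\|G\|_{C^\alpha})\,r^{1+\alpha}.
\end{equation*}

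Finally, the iteration. Choosing in $\omega_f(z_0,\rho)$ the affine map optimal for $g$ on $B(z_0,\rho)$ and combining the three previous displays yields, for $0<\rho\le r/2$,
\begin{equation*}
\omega_f(z_0,\rho)\;\le\;C_1\,(\rho/r)^{\gamma}\,\omega_f(z_0,r)+C(M+\|G\|_{C^\alpha})\,\frac{r^{1+\alpha}}{\rho}.
\end{equation*}
Given the target $\gamma'$, fix $\gamma\in(\gamma',\alpha_K)$ and then $\rho=\tau r$ with $\tau$ small enough that $C_1\tau^{\gamma}\le\frac12\tau^{\gamma'}$; since $\gamma'\le\alpha$, on scales $r\le R_0$ the forcing term is bounded by $C(\tau,R_0)\,r^{\gamma'}$, so a standard dyadic Campanato iteration gives $\omega_f(z_0,r)\le C\,r^{\gamma'}$ for small $r$, uniformly in $z_0\in\Omega'$, i.e.\ $f\in C^{1,\gamma'}_{\loc}(\Omega)$. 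As $\gamma'\le\alpha$, $\gamma'<\alpha_K$ was arbitrary, this is the theorem; and since $\alpha_K$ entered only through the decay estimate above, it may be replaced there by any exponent valid for autonomous solutions, in particular the optimal $\beta_K$ --- this is the strengthening noted after Theorem~\ref{SchauderforA}.
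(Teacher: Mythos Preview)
Your overall scheme --- freeze the equation at $z_0$, compare via an energy estimate, feed the improved autonomous exponent into a Campanato iteration --- is exactly the route the paper takes, and the step you flag as the main obstacle (upgrading Theorem~\ref{ImprovedAuto} to a uniform decay estimate) is not actually an obstacle: the proof of Theorem~\ref{ImprovedRegularity} already yields the Morrey--Campanato inequality \eqref{MorreyCamp} with an explicit constant depending only on $K$, which is precisely your $\omega_g(z_0,\rho)\le C_1(\rho/r)^{\gamma}\omega_g(z_0,r)$ (this is recorded as Corollary~\ref{holder1K}).

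The genuine gap is in the comparison step. The Dirichlet problem you pose, $g_{\zbar}=\cH(z_0,g_z)+G(z_0)$ in $B$ with $g=f$ on $\partial B$, is overdetermined: a first-order elliptic system admits only one real boundary condition, not two (already for $\cH\equiv 0$ you cannot prescribe an arbitrary complex trace for a holomorphic function). The references you cite solve the associated second-order Leray--Lions Dirichlet problem, which fixes only $\Re g=\Re f$ on $\partial B$; equivalently, the well-posed problem for the Beltrami equation is the Riemann--Hilbert condition $\Re(f-g)=0$. With that boundary condition $w=f-g\notin W^{1,2}_0(B)$, so your extension-by-zero / global Beurling argument $(I-\mu\cB)w_{\zbar}=E$ collapses. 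The remedy, used in the paper (Proposition~\ref{splitting}), is to work with the \emph{local} Beurling transform $\cS_{\D_R}$ of the disc, which is an $L^2(\D_R)$-isometry and gives $\|(f-g)_z\|_{L^2(\D_R)}=\|(f-g)_{\zbar}\|_{L^2(\D_R)}$ under the Riemann--Hilbert condition; from there the comparison estimate $\|D(f-g)\|_{L^2(\D_R)}\le \frac{C}{1-k}\|E\|_{L^2(\D_R)}$ follows exactly as you wrote. A secondary point: your appeal to \cite{ACFJK} for the a priori bound $M=\|Df\|_{L^\infty(\Omega')}<\infty$ is for the homogeneous case $G\equiv 0$; in the inhomogeneous case this requires an independent Caccioppoli-type estimate (the paper supplies one in the appendix, Theorem~\ref{vmolocalestimate}) followed by the same bootstrap you sketch.
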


In addition to the results already mentioned, we also prove an inhomogeneous version of the Caccioppoli inequality for solutions to nonlinear Beltrami equations for which the field $\cH(z,\zeta)$ satisfies a $\mathrm{VMO}$-regularity condition in the first variable. While we only need this inequality for a step in the proof of Theorem \ref{SchauderforH}, there is a good possibility of such an inequality having future applications in the regularity theory of Beltrami equations as well. This result is stated as Theorem \ref{vmolocalestimate} in the appendix.

\medskip

As a closing remark to the introduction we pose the following open question. As 
our first key result, Theorem \ref{ImprovedAuto}, shows that solutions to the autonomous Beltrami equation \eqref{autbel1} enjoy a degree of H\"older continuity higher than the classical exponent $\frac{1}{K}$, it is natural to ask whether such solutions also lie in a higher Sobolev class. Classical results show that due to the quasiregularity of the directional derivatives, solutions to \eqref{autbel1} lie in the Sobolev class $W^{2,q}_{\loc}(\Omega, \C)$ for all $q < p_K = \frac{2K}{K-1}$. Whether this exponent is the optimal one is a question we leave open, though Theorem \ref{MuConstantLemma} may indicate that  the optimal exponent is larger than $p_K$. 
Nevertheless, there are previous studies that show that solutions to elliptic equations may admit improved H\"older regularity but no improved Sobolev regularity -- see for example \cite{convexintegration}, \cite{Baernstein-Kovalev} and \cite{Koskela}.

\section{Connection to divergence equations} 

Let us begin with the Leray-Lions equation
\begin{equation}\label{diverEq2}
\div A(z, \nabla u) = \div g \qquad \text{in $\Omega \subset \C$}.
\end{equation}
The divergence equation is understood in the distributional sense. Here $u \in W^{1, 2}_{\loc}(\Omega, \R)$ and $A$ is measurable in the $z$-variable. The regularity in the gradient variable defines the ellipticity of the equation.

\begin{thm} \label{equivalence}
Let $\Omega \subset \C$ be simply connected domain. Suppose that $f \in W^{1,2}_{\loc}(\Omega, \C)$ solves the Beltrami equation 
\begin{equation}\label{bel2}
f_{\zbar} = \cH(z, f_z) + G \qquad \text{a.e. $z\in \Omega$}.
\end{equation}
Then $u = \Re f$ solves the Leray-Lions equation \eqref{diverEq2}, where $A$ satisfies the strong ellipticity bound \eqref{diverEllipCondition}.

Conversely, if $u \in W^{1, 2}_{\loc}(\Omega, \R)$ solves \eqref{diverEq2}, where $A$ satisfies \eqref{diverEllipCondition}, then there exists $v\in W^{1, 2}_{\loc}(\Omega, \R)$ such that the function $f = u + iv$ solves a Beltrami equation \eqref{bel2}, where the structure field $\cH(z,\zeta)$ is $k$-Lipschitz in the gradient variable $\zeta$ with
$$ k = \frac{K-1}{K+1}.
$$
\end{thm}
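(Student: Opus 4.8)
\textbf{Proof plan for Theorem \ref{equivalence}.}
The strategy is the classical Bers--Nirenberg type dictionary between first-order complex equations and second-order divergence-form equations, carried out with enough care to track the ellipticity constants exactly. For the first implication, start with a solution $f \in W^{1,2}_{\loc}(\Omega,\C)$ of \eqref{bel2} and write $u = \Re f$, $v = \Im f$. The Cauchy--Riemann operators relate $\nabla u$ and $\nabla v$: concretely, $\overline{f_{\zbar}} - f_z = -i(\nabla u)^{\perp} + \ldots$ (up to the standard identifications), so that the Beltrami relation $f_{\zbar} = \cH(z,f_z) + G$ becomes an algebraic identity expressing the $90^{\circ}$-rotated gradient $(\nabla v)^{\perp}$ as a function $A(z,\nabla u)$ of $\nabla u$ plus a field built from $G$. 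The point is that $v \in W^{1,2}_{\loc}$ automatically satisfies $\div\big(J\nabla v\big) = 0$ in the distributional sense (where $J$ is the rotation by $\pi/2$), since this is just the statement $\partial_1\partial_2 v = \partial_2\partial_1 v$; substituting $J\nabla v = A(z,\nabla u) + (\text{term in }G)$ yields \eqref{diverEq2} with $\div g$ the contribution of $G$. The first real task is then the \emph{pointwise algebra}: one must check that the map $\xi \mapsto A(z,\xi)$ so defined satisfies \eqref{diverEllipCondition} precisely when $\cH(z,\cdot)$ is $k$-Lipschitz with $k = \frac{K-1}{K+1}$, i.e. $K + \frac1K = \frac{1+k^2}{?}\ldots$ — this is a direct computation with the quadratic forms, comparing $|\zeta_1-\zeta_2|^2$, $|\cH(z,\zeta_1)-\cH(z,\zeta_2)|^2$ and the inner product, and is essentially the linear-algebra fact quoted before \eqref{diverEllipCondition} applied increment-by-increment.

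For the converse, start from $u \in W^{1,2}_{\loc}(\Omega,\R)$ solving \eqref{diverEq2}. The strong ellipticity \eqref{diverEllipCondition} (in particular the monotonicity it encodes) guarantees that the vector field $A(z,\nabla u) - g$ has the same integrability as $\nabla u$ and, crucially, is \emph{divergence-free}; since $\Omega$ is simply connected, it admits a stream function, i.e. there is $v \in W^{1,2}_{\loc}(\Omega,\R)$ with $J\nabla v = A(z,\nabla u) - g$ (equivalently $\nabla v = -J^{-1}(A(z,\nabla u)-g)$). Setting $f = u + iv$, the very definition of $v$ rewrites as a pointwise relation between $f_{\zbar}$ and $f_z$: one solves the linear system for $f_{\zbar}$ in terms of $f_z$ using \eqref{diverEllipCondition} to see that the resulting map $\zeta \mapsto \cH(z,\zeta)$ is single-valued and Lipschitz. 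Here the second main computation enters: one must invert the relation $\xi \mapsto A(z,\xi)$ on the level of the associated complex fields and verify that the Lipschitz constant of $\cH(z,\cdot)$ is exactly $k = \frac{K-1}{K+1}$ and not merely some $k' < 1$. This is where the sharpness of \eqref{diverEllipCondition} is used: the condition is tailored so that the two quadratic constraints ($|\xi_1-\xi_2|^2 + |A\text{-increment}|^2 \le (K+\frac1K)\langle\cdot,\cdot\rangle$) translate, under the change of variables $\xi \leftrightarrow$ complex derivative, into precisely the disk $|\cH(z,\zeta_1)-\cH(z,\zeta_2)| \le k|\zeta_1-\zeta_2|$.

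\textbf{Main obstacle.} The analytic points (simple connectivity giving a stream function, distributional integration by parts, matching Sobolev classes) are routine; the genuine content — and the step I expect to require the most care — is the \emph{exact constant bookkeeping} in both directions of the pointwise correspondence. One must set up the increments $\zeta_i = f_z$-values versus $\xi_i = \nabla u$-values so that the rotation/conjugation identities line up, and then verify that the ellipticity inequality \eqref{diverEllipCondition} is equivalent — not just sufficient — to $k$-Lipschitz continuity of $\cH(z,\cdot)$ with the stated $k$. A clean way to organize this is to reduce everything, increment by increment, to the known two-variable linear algebra lemma (the equivalence, for unit-determinant matrices, between \eqref{monot} with $\delta = \frac{2K}{K^2+1}$ and \eqref{ellip}), applied not to $A(z,\xi)$ itself but to the difference quotients $\xi \mapsto A(z,\xi_1) - A(z,\xi_1 + \xi)$; the homogeneity $A(z,0)\equiv 0$ and $\cH(z,0)\equiv 0$ then close the loop and ensure $G$ corresponds to $\div g$ with $g$ in the right Hölder/$L^p$ class.
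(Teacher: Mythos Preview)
Your high-level strategy matches the paper's, but two steps that you treat as immediate are in fact the substantive ones. In the forward direction you assert that the Beltrami relation ``becomes an algebraic identity expressing $(\nabla v)^{\perp}$ as a function $A(z,\nabla u)$'': it does not, at least not directly. Writing $f_z = \overline{u_{\zbar} - iv_{\zbar}}$, the equation $u_{\zbar} + iv_{\zbar} = \cH(z,\overline{u_{\zbar} - iv_{\zbar}}) + G$ is an \emph{implicit} relation in which $v_{\zbar}$ appears on both sides through the nonlinear $\cH$. The paper extracts $-iv_{\zbar} = B(z,u_{\zbar})$ by observing that $\omega \mapsto u_{\zbar} - \cH(z,\overline{u_{\zbar}+\omega}) + G$ is a $k$-contraction and invoking the Banach fixed point theorem; without this step you have no function $A$ at all. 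Symmetrically, in the converse direction you write ``one solves the linear system for $f_{\zbar}$ in terms of $f_z$'': the system is not linear, and what is actually needed is the global invertibility of $\xi \mapsto \xi + A(z,\xi)$ on $\C$. The paper gets this from the strong monotonicity encoded in \eqref{diverEllipCondition} (its Claim~2); your plan does not supply this argument.

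Your proposed route for the constant bookkeeping---reducing to the equivalence of \eqref{monot} and \eqref{ellip} for unit-determinant matrices applied to difference quotients---is not the right abstraction: the increments $\xi \mapsto A(z,\xi_1+\xi)-A(z,\xi_1)$ are nonlinear and have no determinant normalisation. The paper instead proves directly (Claim~1) that for any $\xi_j,a_j\in\C$ the inequality $|\xi_1-a_1-\xi_2+a_2|\le k\,|\xi_1+a_1-\xi_2-a_2|$ is \emph{equivalent} to $|\xi_1-\xi_2|^2+|a_1-a_2|^2 \le \frac{2(1+k^2)}{1-k^2}\langle\xi_1-\xi_2,a_1-a_2\rangle$, simply by squaring and rearranging; this single algebraic identity handles both directions with the exact constant $k=\frac{K-1}{K+1}$. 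Finally, the paper also treats measurability of the constructed $\cH$ in $z$ (via mollification of $A$ and stability of the inverse $(I+A(z,\cdot))^{-1}$), a point your plan omits entirely.
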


\begin{proof}

Suppose that $f$ solves the Beltrami equation 
$$
f_{\zbar} = \cH(z, f_z) + G \qquad \text{a.e.}.
$$
We will find a divergence type equation for the real part of $f$. Writing $f = u + iv$  we obtain the equation
\begin{equation}\label{realpartHEq} 
u_{\zbar} + iv_{\zbar} = \cH(z,\overline{u_{\zbar} - iv_{\zbar}}) + G.
\end{equation}
Now, we would like to solve the quantity $-iv_{\zbar}$ from the above equation. Note that the mapping $\omega \mapsto  u_{\zbar} - \cH(z,\overline{u_{\zbar} + \omega}) + G$ is a contraction due to the $k$-Lipschitz property of $\cH$. Thus the Banach fixed point theorem gives us a solution of \eqref{realpartHEq} in terms of the variables $z$ and $u_{\zbar}$, i.e., there exists a function $B$ such that
\[-iv_{\zbar} = B(z,u_{\zbar}).\]

To see that $B$ is measurable in the $z$-variable, note that  $B(z, \xi)$ can be obtained by iterating the map $\omega \mapsto  u_{\zbar} - \cH(z,\overline{u_{\zbar} + \omega}) + G$. At each point of the iteration the function is measurable due to Lusin-measurability of $\cH$ and \cite[Theorem 7.7.2]{AIM}, and the limit function is measurable as a pointwise limit of measurable functions.

Note now that the expression $-iv_{{\zbar}} = -\frac12 \left(-v_y + iv_x\right)$ is divergence free. Thus we obtain the divergence type equation
\begin{equation}\label{divergenceEqReal}
\div B(z,u_{\zbar}) = 0,
\end{equation}
which is understood in the distributional sense. To obtain the equation \eqref{diverEq2} with $A$ satisfying the homogeneity condition $A(z,0) \equiv 0$, we may define $g(z) := -B(z,0)$ and $A(z,\xi) := B(z,\xi) + g(z)$.

Let us then show that the function $A(z,\xi)$ satisfies the ellipticity condition \eqref{diverEllipCondition}. It is enough to verify this condition for $B$ since the condition only involves differences with the variable $z$ fixed. Take $\xi_1,\xi_2 \in \C$. Denote $a_j = B(z,\xi_j)$ for $j = 1,2$. Then by \eqref{realpartHEq} we have
\[\xi_j - a_j = \cH(z,\overline{\xi_j + a_j}).\]
Subtracting and taking absolute values gives
\begin{align*}
|\xi_1 - a_1 - \xi_2 + a_2| = |\cH(z,\overline{\xi_1 + a_1}) - \cH(z,\overline{\xi_2 + a_2})| \leq k|\xi_1 + a_1 - \xi_2 - a_2|.
\end{align*}

We next show that this ellipticity condition is equivalent with the strong ellipticity \eqref{diverEllipCondition}. 

\smallskip

\noindent {\bf Claim 1.} The conditions 
\begin{equation}\label{elliptisyysehdot}
 |\xi_1 - a_1 - \xi_2 + a_2| 
 \leq k\,|\xi_1 + a_1 - \xi_2 - a_2| 
\end{equation}
and 
\begin{equation}\label{elliptisyysehdot2}
|\xi_1 - \xi_2|^2 + |a_1 - a_2|^2 
\leq \frac{2(1 + k^2)}{1 - k^2} \left\langle \xi_1 - \xi_2 , a_1 - a_2\right \rangle
\end{equation}
are equivalent for every $\xi_i, a_i \in \C$.

\smallskip

To prove the claim we first take squares on both sides of \eqref {elliptisyysehdot},
\begin{align*}
&|\xi_1 - \xi_2|^2 + |a_1 - a_2|^2 - 2 \Re((\xi_1 - \xi_2)\overline{(a_1 - a_2)})  \\
&\quad \leq k^2\left(|\xi_1 - \xi_2|^2 + |a_1 - a_2|^2 + 2 \Re((\xi_1 - \xi_2)\overline{(a_1 - a_2)})\right)
\end{align*}
and obtain the equivalent inequality
\begin{align*}
|\xi_1 - \xi_2|^2 + |a_1 - a_2|^2 &\leq \frac{2(1 + k^2)}{1 - k^2} \Re((\xi_1 - \xi_2)\overline{(a_1 - a_2)}) \\& =  \frac{2(1 + k^2)}{1 - k^2}\; (\xi_1 - \xi_2) \cdot (a_1 - a_2),
\end{align*}
which equates to the ellipticity condition \eqref {elliptisyysehdot2}. This proves the claim 1.

\smallskip

Hence, by putting $a_j = B(z,\xi_j)$ in \eqref {elliptisyysehdot2}, we see that  $B$ (and hence $A$) satisfies
\begin{equation*}|\xi_1 - \xi_2|^2 + |B(z,\xi_1) - B(z,\xi_2)|^2 \leq \left(K + \frac1K\right) \left\langle \xi_1 - \xi_2 , B(z,\xi_1) - B(z,\xi_2)\right \rangle.\end{equation*}

\bigskip

Conversely, let $u \in W^{1, 2}_{\loc}(\Omega, \R)$ solve \eqref{diverEq2}. As the function $B(z, \nabla u) = A(z, u_{\zbar}) - g(z)$  is divergence free and $\Omega$ is simply connected, by the Poincar\'e lemma there exists $v  \in W^{1, 2}_{\loc}(\Omega, \R)$ such that 
$$
i\,B(z, \nabla u) = v_{\zbar}.
$$

We will show that $f = u + iv$ solves a Beltrami equation. Now,
\begin{equation}\label{defineH}
\begin{cases}
f_{\zbar} = u_{\zbar} + iv_{\zbar} = u_{\zbar} - B(z, u_{\zbar}) \\
\overline{f_{z}} = u_{\zbar} - iv_{\zbar} = u_{\zbar} + B(z, u_{\zbar}).
\end{cases}
\end{equation}
Using the definition of $B$, we find that
\begin{equation}\label{defineH2}f_{\zbar} - g(z)  = u_{\zbar} - A(z, u_{\zbar}) \quad \text{ and } \quad
\overline{f_{z}} + g(z) = u_{\zbar} + A(z, u_{\zbar}).\end{equation}
\smallskip
\noindent {\bf Claim 2.} If a function $\mathcal{A}(z, \xi) : \Omega \times \C \to \C$ satisfies the ellipticity bound \eqref{diverEllipCondition}, then $T_z(\xi) = (\xi + \mathcal{A}(z, \xi)) : \C \to \C$ is invertible. 

\smallskip

The ellipticity bound \eqref{diverEllipCondition} gives that $I + \mathcal{A}(z, \cdot) : \C \to \C$ is strongly monotone, that is, 
\begin{equation}\label{monotoninen}
c\,|\xi_1 - \xi_2|^2 \leq  \left\langle \xi_1 - \xi_2 , \xi_1 + \mathcal{A}(z, \xi_1) - \xi_2  - \mathcal{A}(z, \xi_2)\right \rangle
\end{equation}
and thus coercive and injective. As $I + \mathcal{A}(z, \cdot) : \C \to \C$ is also continuous, bijectivity, and hence the claim 2, follows.

\smallskip

Thus from \eqref{defineH2} we get that 
\begin{equation}\label{AinvEquation}
f_{\zbar} = (I - A(z, \cdot))(I + A(z, \cdot))^{-1}\left(\bar{f_z} + g(z)\right) + g(z).
\end{equation}
We now define a function $\cH^*$ by
\begin{equation}\label{HAsta}
\cH^*(z, \zeta) := (I - A(z, \cdot))(I + A(z, \cdot))^{-1}(\zeta).
\end{equation}
Thus \eqref{AinvEquation} shows that $f$ solves the partial differential equation
\begin{equation}\label{HauxEquation}
f_{\bz} = \cH^*(z,\bar{f_z} + g(z)) + g(z).
\end{equation}
This equation is already of interest by itself, but for the purposes of the result we are proving we need to transform it into a proper Beltrami equation. To this end define
\begin{equation}\label{cHdefined}
\cH(z,\zeta) := \cH^*(z,\bar{\zeta} + g(z)) - \cH^*(z,g(z)) \quad \text{ and } \quad G(z) := \cH^*(z,g(z)) + g(z).
\end{equation}
These definitions with \eqref{HauxEquation} guarantee that $f$ solves the inhomogeneous nonlinear Beltrami equation \eqref{bel2} and we have that $\cH(z,0) \equiv 0$.
We are left to prove the ellipticity and measurability of $\cH$. It is enough to verify these properties for $\cH^*$.

We proved in Claim 1 that  the strong ellipticity \eqref{diverEllipCondition} is equivalent with 
$$
 |\xi_1 - a_1 - \xi_2 + a_2| \leq k\,|\xi_1 + a_1 - \xi_2 - a_2|.
 $$
 Here we let $\xi_i \in \C$ and $a_i = A(z, \xi_i)$. 
 
 Thus, given $\zeta_1, \zeta_2 \in \C$, we choose $\xi_i = (I + A(z, \cdot))^{-1}(\zeta_i)$ and obtain 
 $$
|\cH^*(z, \zeta_1) - \cH^*(z, \zeta_2)| \leq k\,|\zeta_1 - \zeta_2|,
$$ 
that is, the structural field $\cH^*$ (and hence $\cH$) is $k$-Lipschitz in the second variable as wanted.

\smallskip

For the measurability of $\cH^*$  in the $z$-variable, 
we first mollify $A(z,\xi)$ in the variable $z$, 
$$
A^\epsilon(z,\xi)=\int_{\C} \Phi^\epsilon(\eta)A(z-\eta,\xi)\,dm(\eta)
$$
for a positive mollifier $\Phi^\epsilon : \C \to [0, \infty)$. Now, for fixed $\xi \in \C$, the map $z\mapsto A^\epsilon(z,\xi)$ is continuous, and the map converges to $z \mapsto A(z,\xi)$ as $\epsilon\to 0$ for almost every $z$.

The mollified structural function $A^\epsilon$ satisfies the same strong ellipticity bound \eqref{diverEllipCondition} as $B$, since
$$
\aligned
&\left\langle \xi_1 - \xi_2 , A^\epsilon (z,\xi_1) - A^\epsilon (z,\xi_2)\right \rangle \\
&= 
\int_{\C} \Phi^\epsilon(\eta)\left\langle \xi_1 - \xi_2 , A(z-\eta,\xi_1)- A (z - \eta,\xi_2)\right \rangle \,dm(\eta) \\
&\geq \frac{1}{\left(K + \frac1K\right)} \int_{\C} \Phi^\epsilon(\eta)\Big( |\xi_1 - \xi_2|^2 + |A(z - \eta,\xi_1) - A(z - \eta,\xi_2)|^2 \Big) \,dm(\eta)\\
& \geq  \frac{1}{\left(K + \frac1K\right)}\left(|\xi_1 - \xi_2|^2 + \left(\int_{\C} \Phi^\epsilon(\eta)|A(z - \eta,\xi_1) - A(z - \eta,\xi_2)| \,dm(\eta)\ \right)^2\right) \\
& \geq  \frac{1}{\left(K + \frac1K\right)}\Big(|\xi_1 - \xi_2|^2 + |A^\epsilon(z,\xi_1) - A^\epsilon(z ,\xi_2)|^2\Big),
\endaligned
$$
where the second last inequality follows from H\"older's inequality applied to the second term after expanding.

\smallskip

Now, by the proof of Claim 2, $T_{z, \epsilon}  = (I + A^\epsilon(z, \cdot))$ is invertible and satisfies \eqref{monotoninen}. We show that $T^{-1}_{z, \epsilon}$ is continuous in the $z$-variable. In fact given $z_1,z_2 \in \C$ by monotonicity 
\eqref{monotoninen} it holds that 
\begin{equation}\label{jatkuvuuskaanteisoperaattorille}
|T^{-1}_{z_1, \epsilon}(\zeta) - T^{-1}_{z_2, \epsilon}(\zeta)| \leq \frac{1}{c}|\zeta - T_{z_1, \epsilon}(T^{-1}_{z_2, \epsilon}(\zeta))| = \frac{1}{c}|T_{z_1, \epsilon}(\xi_2) - T_{z_2, \epsilon}(\xi_2)|.
\end{equation}
and thus the continuity of $z \mapsto T_{z, \epsilon}(\xi)$  implies that of $z \mapsto T_{z, \epsilon}(\xi)$. Similarly
setting $\xi = T^{-1}_{z}(\zeta)$,
\eqref{monotoninen} implies that
$$
|T^{-1}_{z, \epsilon}(\zeta) - T^{-1}_{z}(\zeta)| \leq \frac{1}{c}|\zeta - T_{z, \epsilon}(T^{-1}_z(\zeta))| = \frac{1}{c}|T_z(\xi) - T_{z, \epsilon}(\xi)| \to 0,
$$
since the convolution $z\mapsto A^\epsilon(z,\xi)$ converges to $z \mapsto A(z,\xi)$  for almost every $z$.
Thus, for every fixed $\zeta \in \C$, $T^{-1}_z(\zeta) = (I+A(z, \cdot))^{-1}(\zeta)$ is measurable in $z$ as it is at almost every point $z$ a limit of continuous functions. We have proved   $(z, \xi) \mapsto (\xi - A(z, \xi))$ is measurable in $z$ and continuous in $\xi$, that is, it is a Carath\'edory function. Hence we conclude that  $\cH^*(z, \zeta) = (I - A(z, \cdot))(I + A(z, \cdot))^{-1}(\zeta)$ is measurable in $z$. 
\end{proof}

\begin{rem}
One can see easily straight from the proof that linearity and autonomity are preserved: If $\cH(z, \zeta)$ is linear in the $\zeta$-variable, then $A(z, \xi)$ is linear with respect to $\xi$ as well (and vice versa). If $\cH$ does not depend on the $z$-variable, neither does $A$ (and vice versa).
\end{rem}

The following proposition will be important for the proof of Theorem \ref{SchauderforA}.

\begin{prop}\label{Holderperiytyy}
If in Theorem \ref{equivalence} the structure function $A$ is also H\"older continuous with respect to $z$, that is,
$$
|A(z_1, \xi) - A(z_2, \xi)| \leq C\,|z_1 - z_2|^\alpha |\xi| \qquad z_1, z_2 \in \Omega, \quad\xi \in \C,
$$
and the inhomogeneous term $g$ is in $C^{\alpha}$, then the auxiliary function $\cH^*$ as defined in \eqref{HAsta} satisfies the Schauder regularity assumptions of Theorem \ref{SchauderforH}.
\end{prop}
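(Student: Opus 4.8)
The plan is to verify, for the field $\cH^*$ defined in \eqref{HAsta}, the hypotheses on the structure field that appear in Theorem~\ref{SchauderforH}: that $\cH^*$ is $k$-Lipschitz in the gradient variable with $k=\frac{K-1}{K+1}$, that $\cH^*(z_1,0)\equiv 0$, and that $\cH^*$ is $\alpha$-H\"older in $z$ with a constant linear in $|\zeta|$. The first two are already contained in the proof of Theorem~\ref{equivalence}: the bound $|\cH^*(z,\zeta_1)-\cH^*(z,\zeta_2)|\le k\,|\zeta_1-\zeta_2|$ is proved there, and since $A(z,0)\equiv 0$ forces $(I+A(z,\cdot))^{-1}(0)=0$ we get $\cH^*(z,0)=-A(z,0)=0$; measurability of $\cH^*$ in $z$ is likewise part of Theorem~\ref{equivalence}. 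So the whole task reduces to the estimate
\[
|\cH^*(z_1,\zeta)-\cH^*(z_2,\zeta)| \le C'\,|z_1-z_2|^{\alpha}\,|\zeta|,\qquad z_1,z_2\in\Omega,\ \zeta\in\C,
\]
with $C'$ depending only on $K$ and the H\"older constant of $A$. The mixed inequality required in Theorem~\ref{SchauderforH} then follows by splitting $\cH^*(z_1,\zeta_1)-\cH^*(z_2,\zeta_2)=\bigl(\cH^*(z_1,\zeta_1)-\cH^*(z_1,\zeta_2)\bigr)+\bigl(\cH^*(z_1,\zeta_2)-\cH^*(z_2,\zeta_2)\bigr)$ and estimating the two terms by $k\,|\zeta_1-\zeta_2|$ and $C'\,|z_1-z_2|^{\alpha}(|\zeta_1|+|\zeta_2|)$.

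To prove the displayed estimate, fix $\zeta\in\C$ and $z_1,z_2\in\Omega$ and set $\xi_j:=(I+A(z_j,\cdot))^{-1}(\zeta)$, well defined by Claim~2 in the proof of Theorem~\ref{equivalence}. Then $\xi_j+A(z_j,\xi_j)=\zeta$, and unravelling \eqref{HAsta} gives $\cH^*(z_j,\zeta)=\xi_j-A(z_j,\xi_j)=2\xi_j-\zeta$, so that $\cH^*(z_1,\zeta)-\cH^*(z_2,\zeta)=2(\xi_1-\xi_2)$; it therefore suffices to bound $|\xi_1-\xi_2|$. For this I use the strong monotonicity \eqref{monotoninen} of $I+A(z,\cdot)$, valid for every fixed $z$ with one and the same constant $c=c(K)>0$ (this is the content of \eqref{diverEllipCondition}, used pointwise in $z$). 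Applying \eqref{monotoninen} with base point $z_1$ to the pair $\xi_1,\xi_2$, and using $\xi_1+A(z_1,\xi_1)=\zeta$ together with $\xi_2+A(z_1,\xi_2)=\zeta+\bigl(A(z_1,\xi_2)-A(z_2,\xi_2)\bigr)$, the right-hand side collapses to $\langle\xi_1-\xi_2,\,A(z_2,\xi_2)-A(z_1,\xi_2)\rangle$, whence
\[
c\,|\xi_1-\xi_2|^2 \le |\xi_1-\xi_2|\,|A(z_2,\xi_2)-A(z_1,\xi_2)| \le C\,|z_1-z_2|^{\alpha}\,|\xi_2|\,|\xi_1-\xi_2|,
\]
so $|\xi_1-\xi_2|\le (C/c)\,|z_1-z_2|^{\alpha}\,|\xi_2|$. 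Applying \eqref{monotoninen} at $z_2$ to the pair $\xi_2,0$ and using $A(z_2,0)=0$ gives $c\,|\xi_2|^2\le\langle\xi_2,\zeta\rangle\le|\xi_2|\,|\zeta|$, hence $|\xi_2|\le c^{-1}|\zeta|$. Combining the two bounds yields $|\cH^*(z_1,\zeta)-\cH^*(z_2,\zeta)|=2|\xi_1-\xi_2|\le 2Cc^{-2}|z_1-z_2|^{\alpha}|\zeta|$, as desired.

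I do not expect a serious obstacle: the argument is a short bootstrap from the monotonicity of $I+A(z,\cdot)$ already established. The points requiring care are purely bookkeeping --- one must apply monotonicity at a single frozen base point, so that it is precisely the structural H\"older hypothesis on $A$ (and nothing about the regularity of $u$) that carries the $z$-dependence into $\cH^*$; one must note that the monotonicity constant is uniform in $z$, which holds because \eqref{diverEllipCondition} is a pointwise-in-$z$ condition; and one should observe that the inhomogeneity $g$ is irrelevant for the estimate on $\cH^*$ itself, the hypothesis $g\in C^{\alpha}$ being needed only in the later passage from $\cH^*$ to the genuine structure field $\cH$ and inhomogeneous term $G$ via \eqref{cHdefined}, which belongs to the proof of Theorem~\ref{SchauderforA}.
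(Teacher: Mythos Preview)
Your argument is correct. The route differs slightly from the paper's: the paper proves the full mixed bound $|\cH^*(z_1,\zeta_1)-\cH^*(z_2,\zeta_2)|\le k|\zeta_1-\zeta_2|+C|z_1-z_2|^\alpha|\xi_2|$ in one stroke, working with distinct $\zeta_1,\zeta_2$ and invoking the contraction form \eqref{elliptisyysehdot} of the ellipticity (Claim~1) rather than the monotonicity \eqref{monotoninen}. You instead freeze $\zeta$, exploit the algebraic identity $\cH^*(z_j,\zeta)=2\xi_j-\zeta$ to reduce to $|\xi_1-\xi_2|$, and then use \eqref{monotoninen} to control that difference; the mixed inequality is recovered afterwards by the obvious splitting. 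The final step bounding $|\xi_2|$ by $|\zeta_2|$ via \eqref{monotoninen} is identical in both proofs. Your organization cleanly separates the $k$-Lipschitz and $\alpha$-H\"older contributions and avoids the somewhat delicate chain of triangle inequalities the paper uses; the paper's version, on the other hand, yields the combined estimate directly without the preliminary reduction. Both are short and rest on the same ingredients.
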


\begin{proof}
As we already showed in the proof of Theorem \ref{equivalence}, the function $\cH^*$ is given by \eqref{HAsta}, that is,
$$
\cH^*(z, \zeta) = (I - A(z, \cdot))(I + A(z, \cdot))^{-1}(\zeta)
$$ 
and $\cH^*$ is $k$-Lipschitz in the variable $\zeta$. We are left to show that $\cH^*$ satisfies the condition
\begin{equation}\label{Hholdercond}
|\cH^*(z_1, \zeta_1) - \cH^*(z_2, \zeta_2)| \leq C\,|z_1 - z_2|^{\alpha}\bigl(|\zeta_1| + |\zeta_2|\bigr) + k\,|\zeta_1 - \zeta_2|.
\end{equation}
Using the invertibility of the map $\xi \mapsto \xi + A(z,\xi)$, we define $\xi_j = (I + A(z_j,\cdot))^{-1}(\zeta_j)$ for $j = 1,2$. We now use the inequality \eqref{elliptisyysehdot} which in Claim 1 was found to be equivalent with the strong ellipticity condition on the function $A$ to obtain
\begin{align*}
|(\xi_1 &- A(z_1,\xi_1)) - (\xi_2 - A(z_2,\xi_2)) |\\ &\leq |\xi_1 - \xi_2 + A(z_1,\xi_2) - A(z_1,\xi_1)| + |A(z_2,\xi_2) - A(z_1,\xi_2)|
\\ &\leq k\,|\xi_1 - \xi_2 + A(z_1,\xi_1) - A(z_2,\xi_2)| + (1+k)\,|A(z_2,\xi_2) - A(z_1,\xi_2)|
\\ &\leq k\,|(\xi_1 + A(z_1,\xi_1)) - (\xi_2  + A(z_2,\xi_2))| + C\,|z_1 - z_2|^\alpha |\xi_2|
\end{align*}
Written in terms of $\zeta_1$ and $\zeta_2$, the inequality obtained above reads as
\[|\cH^*(z_1, \zeta_1) - \cH^*(z_2, \zeta_2)| \leq k\,|\zeta_1 - \zeta_2| + C\,|z_1 - z_2|^{\alpha}|\xi_2|.\]
However, the last $|\xi_2|$ should still be estimated above by a constant times $|\zeta_2|$. Hence we seek to obtain the inequality
\[|\xi_2| \leq C\,|\xi_2 + A(z_2,\xi_2)|\]
for some constant $C$ uniformly in $z_2$. This comes from applying the strong monotonicity \eqref{monotoninen} of $A$ with $\xi_1 = 0$ and $z = z_2$ to find that
\[c|\xi_2|^2 \leq \left\langle \xi_2 , \xi_2  + A(z, \xi_2)\right \rangle\leq |\xi_2|\,|\xi_2  + A(z, \xi_2)|.\]
Dividing by $c|\xi_2|$ gives the desired inequality. Hence the auxiliary field $\cH^*$ satisfies \eqref{Hholdercond} as desired.
\end{proof}

\section{Directional derivatives}

The following proposition,  \cite[Proposition 2.1]{ACFJK}, tells us that directional derivatives play a key role in the study of autonomous Beltrami equations.
\begin{prop}\label{directionalderivativesKqr} Let $f$ be a solution to an autonomous Beltrami equation \eqref{autbel1}. Then the directional derivatives of $f$ are $K$-quasiregular, which can be summarized in the condition:
\begin{equation}\label{dirqc}|f_{z\bz} + \theta f_{\bz\bz}| \leq k |f_{zz} + \theta f_{z\bz}|\end{equation}
at almost every point $z$ and for every unit vector $\theta$. 
\end{prop}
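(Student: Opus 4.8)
The plan is to exploit the translation invariance of the autonomous equation \eqref{autbel1} together with the Lipschitz bound \eqref{lip31}, reducing everything to a stability property of quasiregular maps, and then to rewrite the resulting distortion inequality in terms of the second order derivatives of $f$.

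\emph{Step 1 (difference quotients).} Fix a unit vector $\theta\in\C$, and for small $h\in\R\setminus\{0\}$ put $f^h(z):=f(z+h\theta)$. Since $\cH$ does not depend on $z$, the translate $f^h$ is again a $W^{1,2}_{\loc}$ solution of $f^h_{\bz}=\cH(f^h_z)$ on any $\Omega'\Subset\Omega$, once $|h|$ is small enough. Hence the normalized difference quotient $g^h:=(f^h-f)/h$ satisfies, almost everywhere on $\Omega'$,
\[
g^h_{\bz}=\frac{\cH(f^h_z)-\cH(f_z)}{h},\qquad\text{so that}\qquad |g^h_{\bz}|\le k\,\frac{|f^h_z-f_z|}{|h|}=k\,|g^h_z|,
\]
by \eqref{lip31}. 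Thus each $g^h$ is $K$-quasiregular with the same constant $k$.

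\emph{Step 2 (passing to the limit).} Being a $W^{1,2}_{\loc}$ solution of \eqref{bel1}, $f$ is quasiregular and therefore lies in $W^{1,p}_{\loc}(\Omega)$ for some $p>2$; consequently the difference quotients $g^h$ converge, as $h\to0$, in $L^p_{\loc}(\Omega)$ to the distributional directional derivative $g:=D_\theta f=\theta f_z+\bar\theta f_{\bz}$. The family $\{g^h\}$ is uniformly $K$-quasiregular and locally bounded in $L^p$, so by higher integrability, the Caccioppoli estimate and the normal family property of quasiregular mappings a subsequence converges locally uniformly, necessarily to $g$; since the class of $K$-quasiregular maps is closed under locally uniform limits, $g$ is $K$-quasiregular. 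In particular $|g_{\bz}|\le k\,|g_z|$ a.e., and $g=D_\theta f\in W^{1,p}_{\loc}$ for every unit vector $\theta$, whence $f\in W^{2,p}_{\loc}(\Omega,\C)$ and all its second order partials are defined a.e.

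\emph{Step 3 (rewriting and relabelling).} Because $\theta$ is constant, $\partial_z$ and $\partial_{\bz}$ commute with $D_\theta$, so $g_z=\theta f_{zz}+\bar\theta f_{z\bz}$ and $g_{\bz}=\theta f_{z\bz}+\bar\theta f_{\bz\bz}$. Substituting into $|g_{\bz}|\le k\,|g_z|$ and dividing by $|\theta|=1$ yields
\[
|f_{z\bz}+\bar\theta^{\,2}f_{\bz\bz}|\le k\,|f_{zz}+\bar\theta^{\,2}f_{z\bz}|.
\]
As $\theta$ ranges over all unit vectors, so does $\bar\theta^{\,2}$, and after relabelling this is exactly \eqref{dirqc}. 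The one genuinely delicate point is Step 2: one must guarantee that the difference quotients converge and that the (non-convex) distortion inequality survives the limit. This is handled by the a priori higher integrability coming from the quasiregularity of $f$ itself, together with the classical compactness and closure properties of $K$-quasiregular mappings; alternatively one may extract a weak $W^{1,2}_{\loc}$ limit and combine weak lower semicontinuity of $g\mapsto\int|Dg|^2$ with weak continuity of the Jacobian (a null Lagrangian) and the Lebesgue differentiation theorem to recover the pointwise inequality $|Dg|^2\le K\,J_g$.
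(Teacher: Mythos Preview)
Your argument follows the same basic strategy as the paper's proof: use translation invariance and the Lipschitz bound on $\cH$ to show the difference quotients are uniformly $K$-quasiregular, then pass to the limit. The paper in fact cites \cite[Proposition~2.1]{ACFJK} for this part, so your Steps~1--2 essentially reconstruct that external input.

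There is, however, one point the paper singles out and which you do not address: the statement asks for \eqref{dirqc} to hold \emph{at almost every $z$ and for every $\theta$}, i.e.\ with a single null set independent of $\theta$. Your Steps~2--3, as written, only yield: for each fixed $\theta$, the inequality holds at almost every $z$. Since the unit circle is uncountable, one cannot simply intersect the exceptional sets. The paper resolves this by observing that once the directional derivatives are quasiregular they are \emph{continuous}; hence the difference-quotient inequality $|(f_{\bar z})_{h,\theta}|\le k\,|(f_z)_{h,\theta}|$ holds at \emph{every} $z$, and one passes to the limit as $h\to 0$ only at the points where $f_z$ and $f_{\bar z}$ are simultaneously (classically) differentiable---a single full-measure set that does not depend on $\theta$.

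Your framework can also be repaired without invoking continuity: since you already have $f\in W^{2,p}_{\loc}$, the three second-order derivatives $f_{zz},f_{z\bar z},f_{\bar z\bar z}$ are defined off a single null set, and on its complement the two sides of \eqref{dirqc} are continuous functions of $\theta$. Running your Step~2 for a countable dense set of directions and intersecting the resulting null sets then gives the inequality for all $\theta$ simultaneously. Either way, the uniformity in $\theta$ deserves an explicit sentence.
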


\begin{proof}

In \cite[Proposition 2.1]{ACFJK} it was proved that the directional derivatives are quasiregular and thus, for fixed $\theta$, \eqref{dirqc} holds almost everywhere.
 The exceptional set might a priori depend on $\theta$. However,  by quasiregularity the directional derivatives are  differentiable almost everywhere and continuous. Therefore $\partial_x,\partial_y$ are continuous and simultaneously differentiable up to a null set, and  so are $\partial_z f, \partial_{\overline{z}} f$ and in
fact all directional derivatives $\partial_{\theta} f$.

Due to continuity, the difference quotient $f_{h,\theta}=\frac{f(z+h\theta)-f(z)}{h}$ satisfies the distortion inequality for
every $z \in \C$:
\[ |(\partial_{\overline z} f)_{h,\theta}| \le  k|(\partial_{ z} f)_{h,\theta}| . \]
In particular, at the points of differentiability of $\partial_z  f$ and $\partial_{\overline{z}} f$ we have $|  \partial_{\theta}\partial_{\overline z} f |\le k | \partial_{\theta} \partial_{ z}  f |$, which is \eqref{dirqc}. \end{proof}




The above proposition reveals that solutions to autonomous equations enjoy one more degree of regularity than general quasiregular maps. In particular any such solution must be in the classes $C^{1,1/K}_{\loc}$ and $W^{2,p}_{\loc}$ for $p < \frac{2K}{K-1}$ due to classical regularity theory of quasiregular maps. But the proposition also indicates that to study solutions to autonomous equations, we should first understand those maps which have $K$-quasiregular directional derivatives. The following lemma indicates that these two classes of maps are not that different.
\begin{lem}\label{DirectionalLemma} Suppose $f \in W^{2,2}_{\loc}(\Omega)$ has $K$-quasiregular directional derivatives. Then the following hold
\begin{enumerate}
\item{The map $f_z : \Omega \to \cc$ is $K$-quasiregular.}
\item{If $f$ is $K$-quasiregular, then at those points where $f_z$ is a local homeomorphism $f$ is locally the solution of an autonomous equation.}
\end{enumerate}
\end{lem}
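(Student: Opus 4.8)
The plan is to derive both parts from the pointwise distortion relation \eqref{dirqc} of Proposition \ref{directionalderivativesKqr}, namely, for a.e.\ $z$ and every unit vector $\theta$,
\[
|f_{z\bar z}+\theta f_{\bar z\bar z}|\le k\,|f_{zz}+\theta f_{z\bar z}| .
\]
For part (1), $f\in W^{2,2}_{\loc}(\Omega)$ gives $f_z\in W^{1,2}_{\loc}(\Omega,\cc)$, so since $\partial_{\bar z}(f_z)=f_{z\bar z}$ and $\partial_z(f_z)=f_{zz}$ it suffices to establish $|f_{z\bar z}|\le k\,|f_{zz}|$ almost everywhere. Fix a point where \eqref{dirqc} holds for all unit $\theta$ and write $a=f_{z\bar z}$, $b=f_{\bar z\bar z}$, $c=f_{zz}$; we may assume $a\ne 0$. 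Choosing $\theta$ so that $\theta b$ is a nonnegative multiple of $a$ (any $\theta$ if $b=0$) makes $|a+\theta b|=|a|+|b|$, so $|a|+|b|\le k(|c|+|a|)$; in particular $c\ne0$, for $c=0$ would force the impossible $|a|+|b|\le k|a|$. Choosing instead $\theta$ with $\theta a$ a nonpositive multiple of $c$ (possible since $a,c\ne0$) makes $|c+\theta a|=\bigl||c|-|a|\bigr|$, so $|a|-|b|\le k\bigl||c|-|a|\bigr|$. Adding the two inequalities gives
\[
2|a|\le k|c|+k|a|+k\bigl||c|-|a|\bigr|,
\]
which is $|a|\le k|c|$ when $|a|\le|c|$ and the contradiction $2|a|\le 2k|a|$ when $|a|>|c|$. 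Hence $f_z$ is $K$-quasiregular.

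For part (2), assume in addition that $f$ is $K$-quasiregular; then $f\in W^{1,2}_{\loc}$ and $|f_{\bar z}|\le k|f_z|$ a.e. Each directional derivative $\partial_\theta f=\theta f_z+\bar\theta f_{\bar z}$ is $K$-quasiregular, hence continuous, so $f_z$ and $f_{\bar z}$ are continuous (take $\theta=1,i$) and $|f_{\bar z}|\le k|f_z|$ holds everywhere. Let $z_0$ be a point at which $f_z$ is a local homeomorphism: then $f_z$ maps a neighbourhood $U$ of $z_0$ homeomorphically onto a neighbourhood $V$ of $w_0:=f_z(z_0)$, and by part (1) $f_z$ is a non-constant $K$-quasiregular local homeomorphism on $U$, hence $K$-quasiconformal there with $J_{f_z}>0$ a.e., so $\phi:=(f_z)^{-1}:V\to U$ is $K$-quasiconformal. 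Put $\cH_0(w):=f_{\bar z}(\phi(w))$ for $w\in V$; then $\cH_0(f_z(z))=f_{\bar z}(z)$ for $z\in U$, so $f$ satisfies $f_{\bar z}=\cH_0(f_z)$ on $U$, and $|\cH_0(w)|=|f_{\bar z}(\phi(w))|\le k|f_z(\phi(w))|=k|w|$ on $V$.

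The crux, and the step I expect to need most care, is that $\cH_0$ is $k$-Lipschitz on $V$. By the chain rule for a $W^{1,2}_{\loc}$ function under a quasiconformal change of variable, $\cH_0\in W^{1,1}_{\loc}(V)$ with $D\cH_0(w)=Df_{\bar z}(\phi(w))\circ D\phi(w)$ for a.e.\ $w$; at such $w$ the map $\phi$ is differentiable with invertible differential, so $D\phi(w)=(Df_z(z))^{-1}$ with $z=\phi(w)$. Using the real-linear forms $Df_z(z)[v]=f_{zz}v+f_{z\bar z}\bar v$ and $Df_{\bar z}(z)[v]=f_{z\bar z}v+f_{\bar z\bar z}\bar v$, and writing $v=|v|e^{i\alpha}$, $\mu=e^{-2i\alpha}$, one gets
\[
\|D\cH_0(w)\|_{\mathrm{op}}=\sup_{v\ne0}\frac{|f_{z\bar z}v+f_{\bar z\bar z}\bar v|}{|f_{zz}v+f_{z\bar z}\bar v|}=\sup_{|\mu|=1}\frac{|f_{z\bar z}+\mu f_{\bar z\bar z}|}{|f_{zz}+\mu f_{z\bar z}|}\le k ,
\]
the last inequality being precisely \eqref{dirqc}. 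Thus $D\cH_0\in L^\infty_{\loc}(V)$, so $\cH_0\in W^{1,\infty}_{\loc}(V)$, and being continuous it is locally $k$-Lipschitz. Since moreover $|\cH_0(w)|\le k|w|$ on $V$, adjoining the value $0$ at the origin keeps the map $k$-Lipschitz, so by Kirszbraun's theorem $\cH_0$ extends to a $k$-Lipschitz $\cH:\cc\to\cc$ with $\cH(0)=0$, and $f$ solves the autonomous Beltrami equation \eqref{autbel1} on $U$. The conceptually surprising point is part (1) --- that the one-parameter family \eqref{dirqc} forces the distortion bound for $f_z$ with the \emph{same} ellipticity constant $k$ --- while part (2) is mostly the Sobolev chain-rule bookkeeping, organised around the displayed identity for $\|D\cH_0\|_{\mathrm{op}}$.
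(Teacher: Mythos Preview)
Your proof is correct and follows essentially the same route as the paper: for part (1) you extract the two inequalities $|a|+|b|\le k(|c|+|a|)$ and $|a|-|b|\le k\bigl||c|-|a|\bigr|$ from special choices of $\theta$ in \eqref{dirqc} and combine them exactly as the paper does; for part (2) you define $\cH_0=f_{\bar z}\circ(f_z)^{-1}$, identify $\|D\cH_0\|_{\mathrm{op}}$ with the ratio governed by \eqref{dirqc} via the chain rule (the paper phrases this as $|B_{z_0}\circ A_{z_0}^{-1}|\le k$ for the linear differentials $A_{z_0},B_{z_0}$ of $f_z,f_{\bar z}$), and then extend by Kirszbraun. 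The only cosmetic difference is that you are slightly more explicit about the Sobolev chain rule, while the paper works directly with the pointwise differentials; both treatments implicitly shrink $V$ to a convex neighbourhood so that the a.e.\ bound $\|D\cH_0\|\le k$ yields a genuine $k$-Lipschitz estimate before invoking Kirszbraun.
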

Note that in the above lemma, the set of points where $f_z$ is not a local homeomorphism is quite small. More precisely, since $f_z$ is quasiregular the Sto\"ilow factorization reveals that this set is the zero set of a holomorphic function.

\begin{proof}[Proof of Lemma \ref{DirectionalLemma}]
To prove the quasiregularity of $f_z$, we start by manipulating \eqref{dirqc}. Choosing $\theta$ suitably and applying the triangle inequality gives us the following inequalities almost everywhere:
\begin{eqnarray} 
|f_{z\bz}| + |f_{\bz\bz}|&\leq& k |f_{zz}| + k |f_{z\bz}|, \label{dist34}\\
|f_{z\bz}| - |f_{\bz\bz}|&\leq& k||f_{zz}| - |f_{z\bz}||. \label{dist56}
\end{eqnarray}
Suppose first that $|f_{z\bz}| > |f_{zz}|$. Then adding the above inequalities together would give that $2|f_{z\bz}| < 2k |f_{z\bz}|$, a contradiction. Thus $|f_{z\bz}|\leq |f_{zz}|$. Adding the two inequalities now gives
\[2|f_{z\bz}| \leq 2k |f_{z z}|,\]
which is the distortion inequality for $f_z$, and thus $f_z$ is $K$-quasiregular.

Suppose now that $f_z$ is injective in an open set $V \subset \Omega$. Let $z_0 \in V$. Then around the point $\omega_0 = f_z(z_0)$ we may define the field $\hhh$ by $\hhh := f_{\bz} \circ f_z^{-1}$, giving us the autonomous equation $f_{\bz} = \hhh(f_z)$. Now at $z_0$, we define two linear transformations by
\[A_{z_0}(z) = f_{zz}(z_0) z + f_{z\bz}(z_0) \bar{z} \]
and
\[B_{z_0}(z) = f_{z\bz}(z_0) z + f_{\bz\bz}(z_0) \bar{z}.\]
In fact, $A_{z_0}$ and $B_{z_0}$ are the linear differentials of $f_z$ and $f_{\bz}$ at $z_0$ respectively. The $K$-quasiregularity of the directional derivatives of $f$ implies that $|B_{z_0}(z)| \leq k |A_{z_0}(z)|$ for every complex number $z$. Since $f_z$ is quasiconformal at $z_0$, the map $A_{z_0}$ is invertible. Hence the linear map $B_{z_0} \circ A_{z_0}^{-1}$ is well-defined and its operator norm is bounded by the number $k$. We may now calculate that
\[|D\hhh(\omega_0)| = |D f_{\bz}(f_z^{-1}(\omega_0))|\,|D f_{z}^{-1}(\omega_0)| = |B_{z_0}| |A_{z_0}^{-1}| \leq k.\]
The above inequality shows that the map $\hhh$ is $k$-Lipschitz at any point $\omega_0 \in U$, where $U = f_z(V)$. Since the equation $f_{\bz} = \hhh(f_z)$ holds in the set $V$, the $K$-quasiregularity of $f$ implies that the inequality $|\hhh(\zeta)|\leq k|\zeta|$ also holds for all $\zeta \in U$. Thus $\hhh$ is $k$-Lipschitz in the (possibly disconnected) set $U \cup \{0\}$ where we define $\hhh(0) = 0$ if $0 \notin U$. We may now extend $\hhh$ as a $k$-Lipschitz mapping to the whole plane by the Kirszbraun extension theorem, and this guarantees that it satisfies the required normalization $\hhh(0) = 0$. This proves our claim.
\end{proof}
Another equivalent condition for a map to be a solution of an autonomous Beltrami equation is given by the quasiregularity of the increments, as shown in the following result.
\begin{prop} A function $f$ is the solution of an autonomous Beltrami equation if and only if the increments $f(z+v)-f(z)$ are $K$-quasiregular functions of $z$.
\end{prop}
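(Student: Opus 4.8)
The plan is to characterize solutions of autonomous Beltrami equations through the structure we have already developed, namely the quasiregularity of directional derivatives (Proposition \ref{directionalderivativesKqr}) together with Lemma \ref{DirectionalLemma}, and to translate the property ``the increments are quasiregular in $z$'' into exactly that condition. First I would treat the easy direction: if $f$ solves $f_{\bz} = \cH(f_z)$ with $\cH$ being $k$-Lipschitz and $\cH(0) = 0$, then for a fixed shift $v$ the increment $g_v(z) = f(z+v) - f(z)$ satisfies $(g_v)_{\bz} = f_{\bz}(z+v) - f_{\bz}(z) = \cH(f_z(z+v)) - \cH(f_z(z))$, and the $k$-Lipschitz bound on $\cH$ immediately gives $|(g_v)_{\bz}| \le k\,|f_z(z+v) - f_z(z)| = k\,|(g_v)_z|$, so $g_v$ is $K$-quasiregular. (One should note $g_v \in W^{1,2}_{\loc}$ on the appropriate subdomain, which is automatic since $f$ is quasiregular hence in $W^{1,2}_{\loc}$.)

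For the converse, suppose every increment $f(z+v) - f(z)$ is $K$-quasiregular in $z$. The key step is to pass to the infinitesimal level: dividing by $|v|$ and letting $v = h\theta \to 0$ along a fixed direction $\theta$, the difference quotients $f_{h,\theta}(z) = \frac{f(z+h\theta) - f(z)}{h}$ are $K$-quasiregular, and this is precisely the setup appearing in the proof of Proposition \ref{directionalderivativesKqr}: the distortion inequality passes to the limit at points of differentiability, yielding $|\partial_\theta \partial_{\bz} f| \le k\,|\partial_\theta \partial_z f|$ for every unit vector $\theta$, which is condition \eqref{dirqc}. To run this argument cleanly I first need that $f$ itself is $K$-quasiregular (take $v \to 0$ freely, or observe the $v$-increments being quasiregular already forces it) and that $f \in W^{2,2}_{\loc}$ — the latter follows because $f_z$ will be shown quasiregular and one invokes the classical second-order Sobolev regularity of quasiregular maps as recalled after Proposition \ref{directionalderivativesKqr}. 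Once \eqref{dirqc} holds, Lemma \ref{DirectionalLemma}(1) gives that $f_z$ is $K$-quasiregular, and Lemma \ref{DirectionalLemma}(2) gives that on the complement of the (holomorphically thin) branch set of $f_z$, the map $f$ is locally a solution of an autonomous equation; a final argument using the Kirszbraun extension, exactly as at the end of the proof of Lemma \ref{DirectionalLemma}, patches the local fields $\hhh$ into a single globally defined $k$-Lipschitz $\cH$ with $\cH(0)=0$, and continuity of $f_z$ lets one conclude $f_{\bz} = \cH(f_z)$ holds everywhere, including across the branch set.

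The main obstacle I anticipate is making the limit $v \to 0$ rigorous and, relatedly, securing enough a priori regularity to even speak of the second derivatives: one must be careful that ``$f(z+v)-f(z)$ is $K$-quasiregular for each fixed $v$'' is a hypothesis only about first-order behavior, and bootstrapping it to $W^{2,2}_{\loc}$ requires first extracting quasiregularity of $f$ and of $f_z$. A clean route is to not differentiate directly but to note that for fixed $v$ the quasiregularity of the increment, rescaled, gives uniform distortion control on all difference quotients $f_{h,\theta}$; since $f$ is already quasiregular its directional derivatives exist a.e.\ and are the a.e.\ limits of these quotients, and the distortion inequality is stable under such limits. A second, more minor, point is the consistency of the locally defined autonomous fields on overlaps and the extension across the branch set, but this is handled verbatim by the Kirszbraun/continuity argument already used in Lemma \ref{DirectionalLemma}, so I expect it to be routine rather than a genuine difficulty.
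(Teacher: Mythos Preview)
Your forward direction is correct and matches the paper.

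For the converse you take a genuine detour through directional derivatives and Lemma~\ref{DirectionalLemma}, and the patching step has a gap. The paper never passes to the infinitesimal limit. Once $f\in C^{1,1/K}_{\loc}$ is established (from the quasiregularity of the difference quotients, as in the proof of Proposition~\ref{directionalderivativesKqr}), the increment hypothesis at \emph{finite} $v$ reads, for the now continuous derivatives,
\[
|f_{\bz}(z+v)-f_{\bz}(z)|\le k\,|f_z(z+v)-f_z(z)|\qquad\text{for all }z,v.
\]
Setting $z_2=z+v$, this single inequality gives both the well-definedness of $\cH(f_z(z)):=f_{\bz}(z)$ on the image of $f_z$ (if $f_z(z_1)=f_z(z_2)$ the right-hand side vanishes, hence so does the left) and its $k$-Lipschitz bound there. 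Kirszbraun is then invoked only once, to extend $\cH$ from the image of $f_z$ to all of $\C$.

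In your plan the well-definedness step is exactly the ``consistency of the locally defined autonomous fields on overlaps,'' and it is \emph{not} handled by Kirszbraun or by continuity: Kirszbraun extends a single-valued Lipschitz map, it cannot reconcile two values already assigned to the same point. If $z_1,z_2$ lie in disjoint neighborhoods on which $f_z$ is injective and $f_z(z_1)=f_z(z_2)$, the two local fields $f_{\bz}\circ f_z^{-1}$ could a priori disagree at that common image point; ruling this out requires precisely the finite-$v$ inequality displayed above, which your passage to the infinitesimal condition \eqref{dirqc} has discarded. So you must return to the original increment hypothesis anyway, and at that point the paper's direct argument is both shorter and avoids the detour entirely.

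A smaller issue: your claim that $f$ itself is $K$-quasiregular (needed to invoke Lemma~\ref{DirectionalLemma}(2)) is not justified by ``take $v\to 0$'', which yields only the zero map. The paper's argument sidesteps this by never invoking Lemma~\ref{DirectionalLemma}(2) beyond the final Kirszbraun extension step.
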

\begin{proof}
If $f$ solves an autonomous Beltrami equation, then the inequality
\[|f_{\bz}(z+v) - f_{\bz}(z)| = \left| \hhh(f_z(z+v)) - \hhh(f_z(z)) \right| \leq k |f_{z}(z+v) - f_{z}(z)|\]
proves the claim in the other direction. To establish the converse, note that since the increment $f(z+v)-f(z)$ is quasiregular, $f$ belongs to $C_{\loc}^{1, 1/K}$ by the proof of \cite[Proposition 2.1]{ACFJK}. Thus we have that at every point $z$
\[f_{z}(z+v) - f_{z}(z) = 0 \qquad \Rightarrow \qquad f_{\bz}(z+v) - f_{\bz}(z) = 0.\]
This implies that if $f_z(z) = f_z(\omega)$ for some $z,\omega\in\cc$, then also $f_{\bz}(z) = f_{\bz}(\omega)$. As a consequence $f$ uniquely defines a field $\hhh$ such that $\cH(f_z(z))=f_{\bar{z}}(z)$. This field $\cH$ is $k$-Lipschitz in the image set of the function $f_z$ by construction and the $K$-quasiregularity of the increments. We may extend $\cH$ to the whole complex plane as in the proof of Lemma \ref{DirectionalLemma}, part (2), giving us the autonomous equation solved by $f$.
\end{proof}

\begin{lem}\label{MuNuLemma} Let $f$ have $K$-quasiregular directional derivatives. Denote $\mu = f_{z\bz}/f_{zz}$ and $\nu = f_{\bz\bz}/f_{zz}$. Then we have that almost everywhere
\begin{enumerate}
\item{$|\nu| \leq k + (k-1)|\mu|$, in particular $|\nu| \leq k$.}
\item{$|\nu-\mu^2| \leq \frac1k (k^2 - |\mu|^2)$.}
\end{enumerate}
\end{lem}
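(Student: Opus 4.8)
The plan is to extract both inequalities directly from the pointwise distortion inequality \eqref{dirqc}, which says $|f_{z\bz} + \theta f_{\bz\bz}| \leq k\,|f_{zz} + \theta f_{z\bz}|$ for every unit vector $\theta$. Since everything is homogeneous of degree one in the second derivatives, I would work at a fixed point where $f_{zz} \neq 0$ (the set where $f_{zz} = 0$ is negligible because $f_z$ is $K$-quasiregular by Lemma \ref{DirectionalLemma}(1), so its derivative vanishes only on a null set), divide through by $|f_{zz}|$, and rewrite \eqref{dirqc} as
\[
|\mu + \theta \nu| \leq k\,|1 + \theta \mu| \qquad \text{for all } |\theta| = 1.
\]
Everything then reduces to an elementary problem about two complex numbers $\mu, \nu$ satisfying this family of inequalities.

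For part (1), I would choose $\theta$ so as to make the left side as large as possible and the right side as small as possible simultaneously; concretely, pick $\theta$ with $\theta\nu = |\nu|\cdot\frac{\mu}{|\mu|}$ when $\mu \neq 0$ — wait, more carefully: choose $\theta$ so that $\theta \mu$ has argument opposite to $1$, i.e. $\theta\mu = -|\mu|$, which minimizes $|1+\theta\mu| = 1 - |\mu|$ (valid since $|\mu|\leq 1$, itself a consequence of taking $\theta \to 0$... actually $|\mu|\le k<1$ follows from an analogous choice, or from $|f_{z\bz}|\le k|f_{zz}|$ already established in the proof of Lemma \ref{DirectionalLemma}). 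With that same $\theta$, the left side is $|\mu + \theta\nu| \geq |\theta\nu| - |\mu| = |\nu| - |\mu|$ by the reverse triangle inequality. Hence $|\nu| - |\mu| \leq k(1 - |\mu|)$, which rearranges to $|\nu| \leq k + (k-1)|\mu|$, and since $|\mu| \geq 0$ and $k - 1 < 0$ this immediately gives $|\nu| \leq k$.

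For part (2), the idea is to square the inequality $|\mu + \theta\nu|^2 \leq k^2 |1 + \theta\mu|^2$, expand both sides into $|\mu|^2 + |\nu|^2 + 2\Re(\bar\mu\theta\nu)$ and $k^2(1 + |\mu|^2 + 2\Re(\theta\mu))$, and then integrate (average) over all unit vectors $\theta$ — or rather, choose $\theta$ cleverly. Averaging over $\theta$ kills the cross terms and only gives $|\mu|^2 + |\nu|^2 \le k^2(1+|\mu|^2)$, which is too weak. Instead I would substitute $\theta = $ a carefully chosen unit multiple of $\bar\mu^2/|\mu|^2$-type quantity; the target inequality $|\nu - \mu^2| \leq \frac1k(k^2 - |\mu|^2)$ suggests completing the square so that the combination $\bar\mu\theta\nu - k^2\theta\mu$ is controlled. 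Concretely, writing the squared inequality as $k^2 - |\mu|^2 \geq |\nu|^2 - k^2|\mu|^2 + 2\Re(\theta(\bar\mu\nu - k^2\mu))$ and then choosing $\theta$ so that $\theta(\bar\mu\nu - k^2\mu) = -|\bar\mu\nu - k^2\mu|$, I get $k^2 - |\mu|^2 \geq |\nu|^2 - k^2|\mu|^2 + 2|\bar\mu\nu - k^2\mu|$... this still needs massaging into the stated bound. The cleanest route may be: from part (1)'s squared form at the optimal $\theta$, derive $|\mu\bar\mu\nu - \text{(something)}|$; alternatively one observes $|\nu - \mu^2|$ and uses $|\mu + \theta\nu|\le k|1+\theta\mu|$ with $\theta = -\bar\mu/|\mu|$ (if $\mu\ne0$) giving $|\mu - \bar\mu\nu/|\mu||\le k(1-|\mu|)$, i.e. $||\mu|^2 - \bar\mu\nu| \le k|\mu|(1-|\mu|)$, hence $|\bar\mu(\mu^2\cdot\bar\mu/|\mu|^2 - \nu\cdot\bar\mu/|\mu|^2)|$... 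I expect the honest computation to require one more clever choice of $\theta$ or a short optimization over the argument of $\theta$.

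\textbf{Main obstacle.} Part (1) is routine. The genuine difficulty is part (2): identifying the single optimal direction $\theta$ (as a function of $\mu$ and $\nu$) that turns the family \eqref{dirqc} into exactly the clean bound $|\nu - \mu^2| \leq \frac1k(k^2 - |\mu|^2)$, rather than a messier consequence, and handling the degenerate case $\mu = 0$ separately (where the claim just reads $|\nu| \leq k$, already covered). I would expect to spend the bulk of the work choosing $\theta$ to be the unit vector pointing in the direction of $-\overline{(\bar\mu\nu - k^2\mu)}$ or similar, then verifying algebraically that the resulting real inequality is equivalent to the stated one after completing the square.
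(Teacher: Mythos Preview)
Your part (1) contains an algebraic slip that makes the argument fail as written. With your choice $\theta\mu = -|\mu|$ the right side becomes $k(1-|\mu|)$, and the reverse triangle inequality on the left gives $|\nu|-|\mu| \le k(1-|\mu|)$, which rearranges to $|\nu|\le k+(1-k)|\mu|$, \emph{not} $k+(k-1)|\mu|$; the sign on the $|\mu|$-term is wrong. The fix is to choose $\theta$ the other way: align $\theta\nu$ with $\mu$ so that the \emph{left} side is maximized, $|\mu+\theta\nu|=|\mu|+|\nu|$, and then bound the right side crudely by $k(1+|\mu|)$ via the ordinary triangle inequality. That yields $|\mu|+|\nu|\le k(1+|\mu|)$, which is exactly (1). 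This is what the paper does, invoking the inequality \eqref{dist34} already derived in Lemma~\ref{DirectionalLemma}.

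For part (2) there is a genuine missing idea. Squaring and optimizing over a single $\theta\in\partial\D$ gives you the condition $|\mu|^2+|\nu|^2+2|\bar\mu\nu-k^2\mu|\le k^2(1+|\mu|^2)$, which is \emph{equivalent} to the full family of inequalities $|\mu+\theta\nu|\le k|1+\theta\mu|$, but getting from there to $k|\nu-\mu^2|\le k^2-|\mu|^2$ by bare hands is not the clean optimization you are hoping for. The paper bypasses this entirely by observing that
\[
\Theta(\theta)=\frac{\mu+\theta\nu}{1+\theta\mu}
\]
is an analytic function of $\theta$ on the unit disc (since $|\mu|<1$) with $|\Theta|\le k$ on $\partial\D$, hence on all of $\D$ by the maximum principle. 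The Schwarz--Pick lemma applied to $\Theta/k:\D\to\D$ then gives $k|\Theta'(0)|\le k^2-|\Theta(0)|^2$; since $\Theta(0)=\mu$ and $\Theta'(0)=\nu-\mu^2$, this is precisely the claim. The point you are missing is that the family of inequalities over $|\theta|=1$ encodes boundedness of a holomorphic function, and the derivative bound you want is exactly what the Schwarz lemma extracts from that.
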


\begin{proof}
We know from Lemma \ref{DirectionalLemma} that $|f_{zz}| \neq 0$ outside a discrete set.
Dividing \eqref{dist34} by this gives $|\mu| + |\nu| \leq k(1+ |\mu|)$ which proves the first claim. 

For  
($2$), dividing \eqref{dirqc} with $|f_{zz}|$ shows that the inequality
$ |\mu + \theta \nu| \leq k | 1 + \theta \mu|$ holds for every parameter $ \theta \in S(1) = \partial \D(0, 1)$. Thus for almost every fixed $z$,
$$ \Theta: \theta \mapsto \frac{\mu(z) + \theta \nu(z)}{1 + \theta \mu(z)}
$$
is an analytic function from the unit disc onto $\D(0,k)$.  In particular, with the Schwarz lemma one has $\, k |\Theta'(0)| \leq k^2- |\Theta(0)|^2$, which gives the second claim.
\end{proof}

We now come to our main regularity result. We will make use of the following  auxiliary real valued functions

\[I_f := -i \bigl[k  \, {\overline{\, f_z}\,} \, \partial_\phi f_z + 
\overline{ \,f_{\overline{z}} \,} \, \partial_\phi  f_{\overline{z}}  \bigr]\]
and 
\[j_f := k |f_{zz}|^2 + (1-k)|f_{z\bz}|^2 - |f_{\bz\bz}|^2 = k J_{f_z} + J_{f_{\bz}}.\]
Above $\partial_\phi g$ denotes the angular derivative of a function $g$. Recall that in the complex notation it has  the representation
\begin{equation} \label{angular3}
\partial_\phi g = i(zg_z - \bz g_{\bz}), \qquad z = |z|e^{i\phi}.
\end{equation}

By a direct application of the Cauchy-Schwarz and Poincar\'e inequalities to each of the terms, the following inequality will holds with constant $1$ in from of $\int_{S(r)} |\partial_\phi  f_{\overline{z}} | ^2$
 but due to the relations between $f_z$ and $f_{\overline z}$ there is an improvement.
\begin{lem}\label{Poincare} Suppose  $f \in W^{2,2}_{\loc}(\Omega)  $ and let $z_0 \in \Omega$.
Then for almost all small radii, the  circles $S(r) = S(z_0,r)$ satisfy
\begin{equation}
\int_{S(r)} I_f    \le k  \int_{S(r)} |\partial_\phi  f_{{z}} | ^2
+\max\left\{\frac{1}{2},1-2k\right\} 
\int_{S(r)} |\partial_\phi  f_{\overline{z}} | ^2
\end{equation}
\end{lem}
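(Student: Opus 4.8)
The plan is to pass to Fourier series on the circles $S(r)=S(z_0,r)$, rewrite all four integrals via Parseval, and reduce the claimed estimate to an elementary inequality between two nonnegative quadratic forms; the improvement over the naive constant $1$ will come from a single identity linking the first Fourier modes of $f_z$ and $f_{\bar z}$.

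First, after a translation I may assume $z_0=0$ and parametrise $S(r)$ by $\phi\mapsto re^{i\phi}$. Since $f\in W^{2,2}_{\loc}$, that is $f_z,f_{\bar z}\in W^{1,2}_{\loc}$, Fubini in polar coordinates gives that for almost every small $r$ the restrictions $f_z|_{S(r)},f_{\bar z}|_{S(r)}$ lie in $W^{1,2}(S(r))\subset C(S(r))$, with $\partial_\phi$ the tangential derivative; these are the "almost all small radii" in the statement, and all integrations by parts below are then legitimate. Write $f_z|_{S(r)}=\sum_n d_n e^{in\phi}$ and $f_{\bar z}|_{S(r)}=\sum_n c_n e^{in\phi}$ with $d_n=d_n(r)$, $c_n=c_n(r)$. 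A direct Parseval computation gives
\[\int_{S(r)}-i\,\overline{f_z}\,\partial_\phi f_z\,d\phi=2\pi\sum_{n\neq0}n|d_n|^2,\qquad \int_{S(r)}|\partial_\phi f_z|^2\,d\phi=2\pi\sum_{n}n^2|d_n|^2,\]
and the same identities with $c_n$ in place of $d_n$; in particular both sides of the asserted inequality are real. Hence, after dividing by $2\pi$, the lemma reduces to
\[k\sum_{n\neq0}(n-n^2)|d_n|^2+\sum_{n\neq0}(n-c\,n^2)|c_n|^2\le0,\qquad c:=\max\Big\{\tfrac12,\,1-2k\Big\}.\]
This already exhibits the naive bound (the second sum is $\le0$ as soon as $c\ge1$), and shows that the entire difficulty is the single frequency $n=1$ of $f_{\bar z}$, where $n-c\,n^2=1-c>0$.

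The key input — "the relation between $f_z$ and $f_{\bar z}$" — is the identity $c_1(r)=d_{-1}(r)$. I would prove it from \eqref{angular3}: on $S(r)$ one has $\partial_\phi f=i(zf_z-\bar z f_{\bar z})=ir\bigl(e^{i\phi}f_z-e^{-i\phi}f_{\bar z}\bigr)$, so $\int_0^{2\pi}\bigl(e^{i\phi}f_z-e^{-i\phi}f_{\bar z}\bigr)\,d\phi=\tfrac1{ir}\int_0^{2\pi}\partial_\phi f\,d\phi=0$; reading off the Fourier coefficients, $d_{-1}=\widehat{f_z}(-1)$ and $c_1=\widehat{f_{\bar z}}(1)$ must coincide. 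With this in hand the two sums are estimated separately. Using $c\ge\tfrac12$ one checks $n-c\,n^2\le0$ for every integer $n\neq0,1$, hence $\sum_{n\neq0}(n-c\,n^2)|c_n|^2\le(1-c)|c_1|^2$; and since $n^2-n=n(n-1)\ge0$ for all integers, with value $\ge2$ at $n=-1$, we get $k\sum_{n\neq0}(n-n^2)|d_n|^2=-k\sum_{n\neq0}n(n-1)|d_n|^2\le-2k|d_{-1}|^2=-2k|c_1|^2$. Adding the two estimates bounds the left-hand side of the reduced inequality by $(1-c-2k)|c_1|^2$, which is $\le0$ precisely because $c\ge1-2k$. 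This also explains the shape of $c$: the factor $2$ in $-2k|d_{-1}|^2$ is exactly the slack that converts the lossy frequency $n=1$ of $f_{\bar z}$ into the condition $c\ge1-2k$, while the requirement $c\ge\tfrac12$ is what is needed to discard the frequencies $|n|\ge2$ of $f_{\bar z}$.

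The main obstacle is this third step: recognising that the improvement is governed entirely by the coincidence $\widehat{f_z}(-1)=\widehat{f_{\bar z}}(1)$, and that the negative-frequency part of the $k\,|\partial_\phi f_z|^2$-term carries just enough room (with the correct factor $2$) to absorb the surplus created by the frequency-one part of the $|\partial_\phi f_{\bar z}|^2$-term. Once this is seen, what remains is the Parseval bookkeeping and the elementary per-frequency inequalities above, together with the routine justification of Sobolev slicing on almost every circle. A minor point to double-check along the way is that the convergence of $\sum_{n\neq0}n|d_n|^2$ and $\sum_{n\neq0}n|c_n|^2$ (needed for the rearrangements) is guaranteed by $f_z|_{S(r)},f_{\bar z}|_{S(r)}\in W^{1,2}(S(r))$.
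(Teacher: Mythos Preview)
Your proof is correct and follows essentially the same route as the paper's: expand $f_z$ and $f_{\bar z}$ in Fourier series on $S(r)$, use Parseval to reduce to a per-frequency inequality, and exploit the identity $\widehat{f_z}(-1)=\widehat{f_{\bar z}}(1)$ coming from $\int_{S(r)}\partial_\phi f=0$ to absorb the lone problematic $n=1$ mode of $f_{\bar z}$. The only difference is bookkeeping: the paper first rewrites $\sum_n n(k|A_n|^2+|B_n|^2)$ by merging the $n=-1$ and $n=1$ terms via $A_{-1}=B_1$ and then applies $n\le\tfrac12 n^2$ for $n\neq\pm1$, whereas you define $c=\max\{\tfrac12,1-2k\}$ upfront and keep only the $n=-1$ contribution of the $f_z$-sum; both arrive at the same elementary inequality $(1-c-2k)\le0$.
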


\begin{proof}
Let us express $f_z$ and $f_{\bz}$ as a Fourier series  so that for  fixed radius $r$,
\begin{equation*} f_z(re^{i\phi}) = \sum_{n\in\zz} A_n (r) e^{in\phi}, \quad f_{\bz}(re^{i\phi}) =\sum_{n\in\zz} B_n (r) e^{i n \phi}.\end{equation*}

Notice that, since $\int_{S(r)} \partial_\phi f d\phi =0$, it follows that

\begin{equation}
\label{constant}
A_{-1}=B_{1}, \end{equation}
which is what gives us further improvement. 
Taking the angular derivative of the Fourier series and using theorems of Plancherel and Parceval we get 

\begin{align*}
&\frac{1}{2\pi r} \int_{S(r)} I_f=\sum_{n\in\zz} n(k|A_n|^2 + |B_n|^2), \quad {\rm while}
\\ & \frac{1}{2\pi r} \int_{S(r)} k \, | \,\partial_\phi  f_{{z}}\, |^2 +  |\,\partial_\phi  f_{\overline{z}}\,|^2 
=\sum_{n\in\zz} n^2 (k|A_n|^2 +  |B_n|^2).
\end{align*}

Now, \eqref{constant} implies that,
\begin{align*}
&\sum_{n\in\zz} n (k|A_n|^2 + |B_n|^2) = 
 k\sum_{-1 \neq n \, \in \, \zz} n |A_n|^2+ \, (1-k)|B_1|^2 \, +\sum_{1 \neq n \, \in \, \zz} n |B_n|^2.
\end{align*}
Therefore, noticing that $n\le \frac{1}{2}n^2$ when  $n\neq\pm 1$,  we can estimate  

\begin{align*}
&\sum_{n\in\zz} n (k|A_n|^2 + |B_n|^2)\le \\ 
& \quad k\sum_{-1 \neq n \, \in \, \zz} n^2 |A_n|^2+ k |A_{-1}|^2+ (1-2k) |B_1|^2+\frac{1}{2}\sum_{1 \neq n \, \in \, \zz} n^2 |B_n|^2,
\end{align*}
which tells that 
\[\int_{S(r)} I_f \le k\int_{S(r)} | \,\partial_\phi  f_{{z}} \, |^2+ \max\left\{\frac{1}{2}, 1-2k\right\}\int_{S(r)} |\partial_\phi f_{\overline{z}}|^2, \]
proving the claim.
\end{proof}
Notice that $f(z)=|z|^2z$ yields equality in the above lemma. However $\partial_z f$ is not quasiregular.  On the other hand the map
$g(z)=\frac12(z+k\bz)^2$ has quasiregular directional derivatives and shows that the bound $1-2k$ is sharp as $k$ tends to zero. 

\begin{lem} \label{pointwise3}
Suppose $f \in W^{2,2}_{\loc}(\Omega)  $ has $K$-quasiregular directional derivatives. Then for almost every $z = |z|e^{i\phi}$, 
\begin{equation} \label{pointwise}
|\,\partial_\phi f_z(z)\,|^2 \le \frac{|z|^2}{k(1-k)} \, j_f(z)
\end{equation}
\end{lem}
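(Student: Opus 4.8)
The plan is to argue pointwise, away from the discrete set where $f_{zz}$ vanishes (by Lemma \ref{DirectionalLemma}(1) and the Sto\"ilow factorization of the quasiregular map $f_z$, we have $|f_{zz}|\neq 0$ off a discrete set). First I would insert the complex representation \eqref{angular3} of the angular derivative applied to $g=f_z$, namely $\partial_\phi f_z = i(zf_{zz}-\bz f_{z\bz})$, and compute, using $\bz/z = e^{-2i\phi}$ when $z=|z|e^{i\phi}$,
\[
|\partial_\phi f_z|^2 = |zf_{zz}-\bz f_{z\bz}|^2 = |z|^2\,\bigl|f_{zz}-e^{-2i\phi}f_{z\bz}\bigr|^2 .
\]
Factoring out $|f_{zz}|^2$ and writing $\mu = f_{z\bz}/f_{zz}$, $\nu = f_{\bz\bz}/f_{zz}$ as in Lemma \ref{MuNuLemma}, this becomes $|z|^2|f_{zz}|^2\,|1-e^{-2i\phi}\mu|^2$, which by the triangle inequality is at most $|z|^2|f_{zz}|^2(1+|\mu|)^2$. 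This step discards the (harmless) dependence on $\phi$ at the cost of a bound by the worst angular position.

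Next I would observe that $j_f = |f_{zz}|^2\bigl(k + (1-k)|\mu|^2 - |\nu|^2\bigr)$, so the asserted bound \eqref{pointwise} reduces to the scalar inequality
\[
k(1-k)\,(1+|\mu|)^2 \;\le\; k + (1-k)|\mu|^2 - |\nu|^2 .
\]
This is where Lemma \ref{MuNuLemma}(1) enters: it gives $|\nu|\le k-(1-k)|\mu|$, and since $|\nu|\ge 0$ the right-hand side of that bound is nonnegative, so in particular $|\mu|\le k/(1-k)$ and we may square to obtain $|\nu|^2 \le \bigl(k-(1-k)|\mu|\bigr)^2$. Substituting this and expanding, the quadratic in $|\mu|$ collapses exactly to $k(1-k)\bigl(1 + 2|\mu| + |\mu|^2\bigr) = k(1-k)(1+|\mu|)^2$, which closes the estimate and yields \eqref{pointwise}.

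There is really no serious obstacle here; the only points needing care are the justification that one may divide by $f_{zz}$ (handled by the discreteness of its zero set) and the legitimacy of squaring the bound from Lemma \ref{MuNuLemma}(1). I would also note that Lemma \ref{MuNuLemma}(2) is not needed for this particular estimate, and that passing to $(1+|\mu|)^2$ is somewhat wasteful but still sufficient. Finally, in the degenerate situation where $f_z$ is constant, \eqref{dirqc} forces $f_{zz}=f_{z\bz}=f_{\bz\bz}=0$, so both sides of \eqref{pointwise} vanish and the inequality holds trivially.
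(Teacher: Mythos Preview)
Your proof is correct and follows essentially the same route as the paper's: both reduce \eqref{pointwise} to the scalar inequality $k(1-k)(1+|\mu|)^2 \le k + (1-k)|\mu|^2 - |\nu|^2$ via the angular derivative formula and the triangle inequality, and then close it by squaring the bound $|\nu|\le k-(1-k)|\mu|$ from Lemma~\ref{MuNuLemma}(1). Your write-up is simply more explicit about the justification for squaring (nonnegativity of $k-(1-k)|\mu|$) and the degenerate case $f_{zz}\equiv 0$, points the paper leaves implicit.
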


\begin{proof} In view of  \eqref{angular3}, we are to show that 
\[ \frac{1}{|z|^2}|zf_{zz} - \bz f_{z\bz}|^2 \leq \frac{1}{k(1-k)}(k|f_{zz}|^2 + (1-k)|f_{z\bz}|^2 - |f_{\bz\bz}|^2).\]
 In terms of the coefficients $\mu$ and $\nu$ from Lemma \ref{MuNuLemma}, this requires  
\begin{equation*}
k(1-k) (1 + |\mu|)^2 \leq k + (1-k)|\mu|^2 - |\nu|^2.
%
\end{equation*}
But that follows  immediately  by squaring the  first  bound   $|\nu| \leq k + (k-1)|\mu|$ of  Lemma \ref{MuNuLemma}.
\end{proof}


\begin{thm}\label{ImprovedRegularity} Suppose $f$ is a mapping whose directional derivatives are $K$-quasiregular. Then $f$ lies in the class $C^{1,\alpha_K}$, where $\alpha_K > 1/K$. In particular one may take
\[\alpha_K = \frac{1-k}{1+\frac{k}{2} \max\{1,2-4k\}} = \min\left\{  \frac{4}{3K+1},  \frac{K+1}{3K-1}\right\}.\]
\end{thm}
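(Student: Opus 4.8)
I would prove the theorem via a Morrey-type decay estimate for the angular derivative $\partial_\phi f_z$, integrated over circles $S(r) = S(z_0, r)$. The idea is that controlling $\int_{S(r)} |\partial_\phi f_z|^2$ by a power $r^{2\alpha}$ (roughly speaking) forces $f_z \in C^{0,\alpha}_{\loc}$, hence $f \in C^{1,\alpha}_{\loc}$. Since $f_z$ and $f_{\bar z}$ are quasiregular they lie in $W^{2,2}_{\loc}$ away from a small set, so all of the lemmas above apply. One first reduces, by Lemma \ref{DirectionalLemma}(1) and Stoilow factorization, to studying $f_z$ (and $f_{\bar z} = \cH(f_z)$ when applicable) near a generic point, but really the whole argument is carried out with the hypothesis "directional derivatives are $K$-quasiregular" as in the lemmas.

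\textbf{Key steps.} First, combine the harmonic-type / Rellich identity for the quasiregular map $f_z$ with the auxiliary function $I_f$: differentiating $\int_{S(r)} (\text{something})$ in $r$ and using that $f_z$, $f_{\bar z}$ satisfy the distortion inequalities produces a differential inequality relating $\frac{d}{dr}\int_{S(r)}(\cdots)$ to $\int_{S(r)} I_f$ and the boundary integrals $\int_{S(r)} |\partial_\phi f_z|^2$, $\int_{S(r)} |\partial_\phi f_{\bar z}|^2$. Second, feed in Lemma \ref{Poincare}, which gives the improved constant $\max\{\tfrac12, 1-2k\}$ in front of $\int_{S(r)} |\partial_\phi f_{\bar z}|^2$ instead of $1$ — this is the source of the improvement over the exponent $1/K$. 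Third, use Lemma \ref{pointwise3}, i.e. $|\partial_\phi f_z|^2 \le \tfrac{|z|^2}{k(1-k)} j_f$, together with the identity $j_f = k J_{f_z} + J_{f_{\bar z}}$ relating $j_f$ to the Jacobians, to convert the pointwise/circle data into a closed differential inequality of the form $r \Phi'(r) \ge 2\alpha_K \Phi(r)$ for a suitable energy quantity $\Phi(r)$ built from $\int_{S(r)} |\partial_\phi f_z|^2$ (and possibly $\int_{S(r)}|\partial_\phi f_{\bar z}|^2$). The exponent $\alpha_K$ that comes out is precisely the one maximizing the gain subject to the two competing constraints coming from Lemmas \ref{Poincare} and \ref{MuNuLemma}, which is why it takes the form $\frac{1-k}{1+\frac{k}{2}\max\{1,2-4k\}} = \min\{\frac{4}{3K+1}, \frac{K+1}{3K-1}\}$; the two branches of the $\min$ correspond to the two branches of the $\max$. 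Fourth, integrate the differential inequality: $\Phi(r) \le C r^{2\alpha_K}$, and conclude by the standard Morrey/Campanato characterization that $f_z \in C^{0,\alpha_K}_{\loc}$, noting that the discrete exceptional set where $f_{zz}=0$ is removable for Hölder continuity. Finally one checks $\alpha_K > 1/K$, which is elementary since $\frac{4}{3K+1} > \frac1K \iff 4K > 3K+1 \iff K>1$ and $\frac{K+1}{3K-1} > \frac1K \iff K^2 + K > 3K - 1 \iff (K-1)^2 > 0$.

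\textbf{Main obstacle.} The delicate point is setting up the right monotone quantity $\Phi(r)$ and correctly tracking how the two improvements — the Fourier gain in Lemma \ref{Poincare} and the pointwise Jacobian bound in Lemma \ref{pointwise3} — combine without losing constants. In particular one must be careful that the exceptional null sets from each lemma (the "almost all radii" in Lemma \ref{Poincare}, the "almost every $z$" in Lemma \ref{pointwise3}, the discrete zero set of $f_{zz}$) are compatible and do not obstruct the integration of the differential inequality; this is handled by absolute continuity of $r \mapsto \int_{S(r)}(\cdots)$ for Sobolev functions on almost every circle. A secondary subtlety is the bookkeeping that yields exactly $\max\{1, 2-4k\}$ rather than a weaker constant: one has to use the sharp Poincaré constant on $S(r)$ (eigenvalue $1$) together with the constraint $A_{-1} = B_1$ from $\int_{S(r)} \partial_\phi f \, d\phi = 0$, exactly as in the proof of Lemma \ref{Poincare}, and then optimize the resulting quadratic form in the Fourier modes. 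Once $\Phi(r) \lesssim r^{2\alpha_K}$ is in hand the conclusion $f \in C^{1,\alpha_K}_{\loc}$ is routine.
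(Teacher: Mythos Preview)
Your proposal is correct and follows essentially the same route as the paper. The only point to sharpen is the choice of the monotone quantity: the paper takes $\Phi(r)=J(r)=\int_{\D_r} j_f$ (a \emph{disk} integral, which by Green's formula equals $\tfrac{1}{2r}\int_{S(r)} I_f$), and then the chain Lemma~\ref{Poincare} $\Rightarrow$ $|\partial_\phi f_{\bar z}|\le k|\partial_\phi f_z|$ $\Rightarrow$ Lemma~\ref{pointwise3} gives directly $J(r)\le \tfrac{r}{2\alpha_K}\,J'(r)$, hence $J(r)\le (r/R)^{2\alpha_K}J(R)$ and the Morrey--Campanato conclusion.
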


\begin{rem} \label{HolderRemark} While the above theorem gives an improvement on the classical H\"older regularity exponent $1/K$, we do not know what is the sharpest possible exponent for these mappings. Our current best example is the function $f(z) = z^2 |z|^{\frac{3}{2K + 1} - 1}$, which solves a $k$-Lipschitz autonomous Beltrami equation in the whole plane. Indeed, to construct the example consider first the function  $f(z) = z^2 |z|^{2\alpha}$, where $-\frac{1}{2} < \alpha < 1$. For this map
$$ f_z(z) = (2+\alpha) z |z|^{2\alpha} \quad {\rm with } \quad f_{\bar z}(z) = \alpha \frac{z^3}{|z|^2} |z|^{2\alpha}. 
$$
Therefore our function satisfies $ f_{\bar z} = \hhh(f_z)$, where 
$$ \hhh(w) = \frac{\alpha}{2 + \alpha} \frac{w^3}{|w|^2} \quad \Rightarrow \quad k := Lip(\hhh) = \frac{3|\alpha|}{2+\alpha}.
$$
In particular, we have $2\alpha + 1 = \frac{3}{2K + 1}$ for $K= \frac{1+k}{1-k}$. Hence for this function $f \in C^{1,\gamma}$ with $\gamma = \frac{3}{2K + 1}$. This smoothness is still strictly better than the class $C^{1,\alpha_K}$ obtained in Theorem \ref{ImprovedRegularity}.
\end{rem}

\begin{proof}[Proof of Theorem \ref{ImprovedRegularity}]
We assume that $k \neq 0$, as in the case $k = 0$ one trivially has smooth regularity.

Our proof is a modification Morrey's classical proof of the $C^{1/K}_{\loc}$-regularity of quasiregular maps. This  utilizes the Morrey-Campanato characterization of H\"older continuity, essentially requiring us to prove that for a small disc $\D_R = \D(z_0,R)$ and all radii $r \in (0,R)$ we have
\begin{equation}\label{MorreyCamp}
\int_{\D_r} |D^2 f|^2 dm(z) \leq C \left(\frac{r}{R}\right)^{2\alpha_K} \int_{\D_R} |D^2 f|^2 dm(z),
\end{equation}
where $\alpha_K$ is our desired H\"older exponent. Lemmas \ref{DirectionalLemma} and \ref{MuNuLemma} imply that $|f_{z\bz}|$ and $|f_{\bz\bz}|$ are both dominated by $|f_{zz}|$. Thus we are  free to replace the quantity $|D^2 f|^2$ in \eqref{MorreyCamp} by the expression
\[j_f = k |f_{zz}|^2 + (1-k)|f_{z\bz}|^2 - |f_{\bz\bz}|^2 = k J_{f_z} + J_{f_{\bz}}.\]

We now denote $J(r) = \int_{\D_r} j_f(z) dm(z)$. To obtain \eqref{MorreyCamp} it will be enough to prove the following inequality,
\[J'(r) \geq \frac{2\alpha_K}{r} J(r) \ \ \text{ for } r \in (0,R). \]

To show the inequality use first  Green's formula to obtain $$J(r)= \frac{1}{2 r}\int_{S(r)} I_f \,|dz|.$$ Second, by  quasiregularity of directional derivatives,  
for almost every $r$ we have $f \in W^{2,2}(S(r))$ while \eqref{dirqc}  and \eqref{angular3} show that for almost every $z \in S(r)$, 
 \begin{equation}\label{angularqr}
|\,\partial_\phi f_{\overline{z}}\,| \leq k | \,\partial_\phi  f_{{z}} \, |. 
 \end{equation}
We can combine this with
our improved Poincar\'e inequality Lemma~\ref{Poincare} to see that 
\begin{align*} 2 r J(r) &\leq  k\int_{S(r)} | \,\partial_\phi  f_{{z}} \, |^2+ \max\left\{ \frac{1}{2}, 1-2k \right\} \int_{S(r)} |\,\partial_\phi f_{\overline{z}}\,|^2
\\ &\leq \left( k+ k^2 \max\left\{ \frac{1}{2}, 1-2k \right\}  \right) \int_{S(r)} | \,\partial_\phi  f_{{z}}\, |^2 
\end{align*}
where the last line follows from \eqref{angularqr}. This allows us to complete the proof with the  pointwise estimate of  $| \,\partial_\phi  f_{{z}} \,|^2$ from Lemma \ref{pointwise3}. In conclusion, we obtain 
\[ J(r) \leq \frac{r}{2\alpha_K}  \int_{S(r)} j_f \,|dz| =\frac{r}{2\alpha_K} J'(r) \qedhere\]
\end{proof}

\begin{proof}[Proof of Theorem \ref{ImprovedAuto}]
By Proposition \ref{directionalderivativesKqr} the directional derivatives of a solution $f$ to an autonomous Beltrami equation \eqref{autbel1} are $K$-quasiregular. Thus the claim follows from Theorem \ref{ImprovedRegularity}.
\end{proof}
%

Our next theorem will reveal another reason why it is nontrivial to find the optimal regularity for mappings with $K$-quasiregular directional derivatives. Recall that for general $K$-quasiregular mappings, the number $K$ determines their degree of regularity. In other words, the smaller the Beltrami coefficient $\mu$ is the more regular is the map. Hence to obtain an extremal quasiregular map we should certainly require that $||\mu||_\infty = k$, and for the usual radial stretching example we indeed have the equality $|\mu| = k$ everywhere. Surprisingly, however, this equality cannot hold for the Beltrami coefficient of $f_z$ for a map with $K$-quasiregular directional derivatives -- unless the map is smooth.

\begin{thm} \label{MuConstantLemma} Suppose that $f$ has $K$-quasiregular directional derivatives, and 
 that $\mu = f_{z\bz}/f_{zz}$, the Beltrami coefficient of $f_z$,  satisfies $|\mu|=k$ in a disc $D_0 \subset \C$. 
 
 Then in that disc, $\mu = ke^{i\phi_0}$ is constant and  $f$ is real analytic. In fact, 
 $f(z) =  \Phi(z + ke^{i\phi_0} \bz)$, where $\Phi$ is holomorphic.
\end{thm}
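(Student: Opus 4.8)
The plan is to exploit the key structural constraint from Lemma~\ref{MuNuLemma}, namely the inequality $|\nu - \mu^2| \le \frac1k(k^2 - |\mu|^2)$ where $\nu = f_{\bz\bz}/f_{zz}$. If $|\mu| = k$ throughout $D_0$, the right-hand side vanishes, forcing $\nu = \mu^2$ on $D_0$. So the first step is to record that $f_{\bz\bz} = \mu^2 f_{zz}$ and $f_{z\bz} = \mu f_{zz}$ almost everywhere on $D_0$, which says that the Hessian of $f$ at each point is a rank-one-type form: the differential of $f_z$ is $z \mapsto f_{zz}(z + \mu \bar z)$ and similarly the differential of $f_{\bz}$ is $z \mapsto \mu f_{zz}(z + \mu\bar z)$ (using $f_{\bz z} = f_{z\bz}$, $f_{\bz\bz} = \mu^2 f_{zz} = \mu \cdot (\mu f_{zz})$).

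Next I would argue that $\mu$ is constant. The natural route is via the Schwarz-lemma picture already set up in the proof of Lemma~\ref{MuNuLemma}: for a.e.\ fixed $z$ the map $\Theta_z(\theta) = \frac{\mu(z) + \theta\nu(z)}{1 + \theta\mu(z)}$ is analytic from $\D$ into $\D(0,k)$, and $|\mu(z)| = k$ means $|\Theta_z(0)| = k$, i.e.\ $\Theta_z$ attains its maximum modulus at an interior point, so by the maximum principle $\Theta_z$ is the constant $\mu(z)$; this re-derives $\nu = \mu^2$ but more importantly shows the directional Beltrami coefficient $\frac{\partial_\theta f_{\bz}}{\partial_\theta f_z}$ equals $\mu(z)$ for \emph{every} direction $\theta$. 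Now combine the two displayed differentials: $f_z$ is quasiregular with Beltrami coefficient $\mu$, and from $d(f_z) = f_{zz}(dz + \mu\, d\bar z)$ together with $d(f_{\bz}) = \mu f_{zz}(dz + \mu\, d\bar z) = \mu\, d(f_z)$ we get that $d(f_{\bz} - \text{(something)}\cdot f_z)$... more cleanly: introduce the new complex coordinate $w = z + \ell \bar z$ only after knowing $\ell$ is constant. To get constancy, differentiate the relation $f_{\bz} $ restricted appropriately; alternatively, observe $f_z$ is $K$-quasiregular with $|\mu_{f_z}| = k$ constant in modulus, and a quasiregular map whose dilatation has constant maximal modulus and whose $f_{\bz}$-part is itself a multiple of the gradient forces, via the Beltrami equation $f_{zz}$-chain, an overdetermined system; the cleanest is to show $\mu_z = \mu_{\bar z} = 0$ distributionally using $f_{\bz\bz} = \mu^2 f_{zz}$, $f_{z\bz} = \mu f_{zz}$ and the compatibility $\partial_{\bz}(f_{z\bz}) = \partial_z(f_{\bz\bz})$, i.e.\ $\partial_{\bz}(\mu f_{zz}) = \partial_z(\mu^2 f_{zz})$, while also $\partial_{\bz}(f_{zz}) = \partial_z(f_{z\bz}) = \partial_z(\mu f_{zz})$. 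Expanding both and eliminating $f_{zz}$ and its derivatives should yield $\mu_z = 0$ and $\mu_{\bz} = 0$, hence $\mu \equiv ke^{i\phi_0}$.

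With $\mu = ke^{i\phi_0}$ constant, set $w = z + ke^{i\phi_0}\bar z$, a real-linear invertible change of variables. Then $\partial_{\bar w}$ corresponds (up to a constant factor) to the operator $\partial_{\bz} - ke^{i\phi_0}\partial_z$ in the $z$-variable, and I claim $\partial_{\bar w} f_z = \partial_{\bar w} f_{\bz} = 0$: indeed $(\partial_{\bz} - \mu\partial_z) f_z = f_{z\bz} - \mu f_{zz} = 0$ and $(\partial_{\bz} - \mu\partial_z) f_{\bz} = f_{\bz\bz} - \mu f_{z\bz} = \mu^2 f_{zz} - \mu^2 f_{zz} = 0$. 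Hence both $f_z$ and $f_{\bz}$ are holomorphic functions of $w$; writing $f_z = \Psi(w)$ one checks that $f_{\bz} = \mu\Psi(w)$ (since their $w$-derivatives agree up to the factor $\mu$ and one can match constants), and then $df = f_z\,dz + f_{\bz}\,d\bar z = \Psi(w)(dz + \mu\, d\bar z) = \Psi(w)\,dw$, so $f = \Phi(w)$ with $\Phi' = \Psi$ holomorphic. Real analyticity of $f$ on $D_0$ follows since $\Phi$ is holomorphic and $w$ is a polynomial (linear) function of $(z,\bar z)$.

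The main obstacle is the constancy step: upgrading the pointwise modulus condition $|\mu| = k$ to $\mu$ being literally constant. The Schwarz-lemma argument cleanly gives $\nu = \mu^2$ pointwise but does not by itself pin down $\mu$; one genuinely needs the second-order compatibility (equality of mixed third derivatives of $f$) to derive $\partial\mu = \bar\partial\mu = 0$, and care is needed because $f_{zz}$ may vanish on a discrete set (so one should either work away from that set and then use continuity of $f_z$ being quasiregular, or phrase the PDE argument distributionally). Everything after constancy of $\mu$ is the routine linear change of variables sketched above.
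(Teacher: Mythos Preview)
Your opening and closing steps are fine and match the paper: from Lemma~\ref{MuNuLemma} the hypothesis $|\mu|=k$ forces $\nu=\mu^2$, whence $f_{z\bz}=\mu f_{zz}$ and $f_{\bz\bz}=\mu f_{z\bz}$; and once $\mu=ke^{i\phi_0}$ is known to be constant, your change of variables $w=z+ke^{i\phi_0}\bar z$ cleanly produces $f=\Phi(w)$ with $\Phi$ holomorphic.

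The gap is in the constancy step. Your plan is to expand the compatibility $\partial_{\bar z}(\mu f_{zz})=\partial_z(\mu^2 f_{zz})$ together with $\partial_{\bar z}(f_{zz})=\partial_z(\mu f_{zz})$ via the Leibniz rule and solve for $\mu_z,\mu_{\bar z}$. There are two problems. First, even formally this elimination does \emph{not} give $\mu_z=\mu_{\bar z}=0$: it gives only $\mu_{\bar z}=\mu\,\mu_z$, a single first-order relation with plenty of nonconstant solutions; you would still need to feed in $|\mu|\equiv k$ (differentiate $\mu\bar\mu=k^2$) to force $\mu_z=0$. Second, and more seriously, the Leibniz expansion is not justified: a priori $f\in W^{2,p}_{\loc}$ and $\mu\in L^\infty$, so neither third derivatives of $f$ nor weak derivatives of $\mu$ are available, and ``phrasing it distributionally'' does not help because the product-rule step is exactly what fails. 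The difficulty you flag (zeros of $f_{zz}$) is not the real obstruction; lack of regularity of $\mu$ is.

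The paper sidesteps this entirely. From $f_{z\bz}=\mu f_{zz}$ and $f_{\bz\bz}=\mu f_{z\bz}$ it observes that $f_z$ and $f_{\bz}$ solve the \emph{same} linear Beltrami equation $g_{\bar z}=\mu g_z$. Taking a homeomorphic solution $h$ and using the Sto\"ilow factorization, one writes $f_z=A\circ h$ and $f_{\bz}=B\circ h$ with $A,B$ holomorphic. The identity $(A\circ h)_{\bar z}=(B\circ h)_z$ then reduces to $(A'\circ h)\,\mu=B'\circ h$, so $\mu=(B'/A')\circ h$. Since $|\mu|=k$, the holomorphic function $B'/A'$ has constant modulus on an open set, hence is constant, and therefore so is $\mu$. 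This argument never differentiates $\mu$ directly; all derivatives land on the holomorphic factors, where regularity is free.
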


\begin{proof} By the second part of Lemma \ref{MuNuLemma},   if $|\mu| = k$ in  $D_0$, then $\nu = \mu^2$ in this disc. Hence we obtain the following two equations:
\begin{equation} \label{twoBeltramis}
f_{z\bz} = \mu f_{zz} \ \ \text{ and } \ \ f_{\bz\bz} = \mu^2 f_{zz} = \mu f_{z\bz} \qquad \text{a.e. \, in \,} D_0.
\end{equation}
These imply that the functions $f_z$ and $f_{\bz}$ are both solutions to the same Beltrami equation in the disc $D_0$. Hence for a homeomorphic solution $h$ of this equation, $h_{\bar z} = \mu h_z$, we may find analytic functions $A$ and $B$ in $h(D_0)$ so that
\begin{equation} \label{analytic}
f_z = A \circ h \ \ \text{ and } \ \ f_{\bz} = B \circ h.
\end{equation}
This implies that $(A \circ h)_{\bz} = (B \circ h)_z$, which simplifies to
\begin{equation}\label{muequation}(A' \circ h) \mu = B' \circ h \ \ \text{ a.e. \, in \,} D_0.\end{equation}
Since $|\mu| = k$, the holomorphic function $B'/A'$ has constant modulus in the open set $h(D_0)$. Thus
$B'/A'$ must be constant. This shows that $\mu$ is a constant, and one can then solve the equations \eqref{twoBeltramis} to find that $f = \Phi(z + ke^{i\phi_0} \bz)$ for some holomorphic $\Phi$ as claimed.
\end{proof}

\begin{rem} From the above arguments one can get little more, namely for $f$ to be real analytic  it is enough that $\nu=\mu^2$ a.e. in $D_0$.  Even more, if the disc $D_0$ is such that quasiregular map $f_z$ is  invertible in $D_0$, then the second part of Lemma  
\ref{DirectionalLemma} gives a  $k$-Lipschitz structure function $\hhh$ such that
\begin{equation} \label{analytic3} 
f_{\bar z} = \hhh( f_z) \qquad {\rm in \;} D_0.
\end{equation}
But then \eqref{analytic} implies that $B = \hhh \circ A$, so that $\hhh$ must be complex analytic. This implies that
$f$ is real analytic.  

Indeed, a quick way to see this is by first  derivating \eqref{analytic3} and using \eqref{twoBeltramis} to show that $\mu = \hhh'(f_z)$. Derivating  in turn the last identity gives
\[ \mu_{\bz}=\mu \,\mu_z \]
so that $\mu$ is its own Beltrami coefficient. In particular, $\mu = \phi \circ h$ for some analytic function $\phi$.
Finally, cf. \cite[p.34]{AIM}, the inverse $g = h^{-1}$ satisfies  the linear equation $g_{\bar z} = - (\mu \circ h^{-1}) {\overline {g_z}}$, 
that is $g_{\bar z} = - \phi {\overline{ g_z}}$. In particular, $g$ is real analytic, and as a homeomorphism it has non-vanishing Jacobian. It follows that $h$ and hence by \eqref{analytic} also $f$ must be real analytic.  

We also note that the autonomous Beltrami equation \eqref{analytic3} with a complex analytic $\hhh$ is a central tool in the recent study 
 \cite{ADPZ} of the geometry scaling limits of random tilings. 
\end{rem}

In \cite{Uniqueness} it was shown that a global quasiconformal solution to an autonomous equation \eqref{autbel1} must be an affine map. We are able to make the same conclusion while only assuming the quasiregularity of the directional derivatives, giving a completely new proof for the case of the autonomous equation as well.
\begin{thm}\label{GlobalAffine}
Let $f : \cc \to \cc$ be a global quasiconformal map with $K$-quasiregular directional derivatives. Then $f$ is an affine map.
\end{thm}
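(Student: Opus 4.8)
The plan is to combine the structure theorems already established with a Liouville-type argument. First I would use Lemma~\ref{DirectionalLemma}(1) to conclude that $f_z : \cc \to \cc$ is itself $K$-quasiregular. Since $f$ is a global quasiconformal homeomorphism, it has no zeros of $D f$ except possibly on a discrete set, so $f_z$ is in fact nonvanishing away from a discrete set; combined with the Sto\"ilow factorization $f_z = \Phi \circ h$ with $h$ quasiconformal and $\Phi$ holomorphic, the quasiregular map $f_z$ either omits $0$ or vanishes on a discrete set. The key point to extract is that $f_z$ is quasiregular on the whole plane and, because $f$ is a global homeomorphism, $f_z$ cannot grow too fast: one shows $f_z$ is, up to the quasiconformal change of variables $h$, an entire holomorphic function.

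Next, at a generic point $z_0$ where $f_z$ is a local homeomorphism, Lemma~\ref{DirectionalLemma}(2) gives that $f$ locally solves an autonomous Beltrami equation $f_{\bar z} = \hhh(f_z)$ with $\hhh$ a $k$-Lipschitz structure field. I would then invoke the entirety of the hypothesis: $f$ is globally quasiconformal, so globally $K$-quasiregular, and its complex gradient $f_z$ is globally $K$-quasiregular. Writing $f_z = \Phi \circ h$ via Sto\"ilow, the growth constraint coming from $f$ being a homeomorphism of the whole plane (so $|f(z)| \lesssim |z|^{K}$ and $f$ has a well-behaved inverse) forces $\Phi$ to be a polynomial, and then a degree count — using that $Df$ is nonvanishing for a homeomorphism — forces $\Phi$, and hence $f_z$, to be constant up to the quasiconformal reparametrization. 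The cleanest way to finish is to show $f_z$ and $f_{\bar z}$ are both entire-type (holomorphic after the same Sto\"ilow change of coordinates), that the relation $f_{\bar z} = \hhh(f_z)$ propagates globally, and that a globally defined solution with $f_z$ of polynomial type must have $f_z$ constant; then $f_{\bar z} = \hhh(f_z)$ is also constant and $f$ is affine.

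Alternatively, and perhaps more robustly, I would run the argument through Lemma~\ref{MuNuLemma} and a normal-families/rescaling argument: blow up $f$ at a point and at infinity, observing that the $K$-quasiregularity of directional derivatives is scale-invariant, so any tangent map is again of this type and is entire; combined with the $C^{1,\alpha_K}$ regularity from Theorem~\ref{ImprovedRegularity} and the polynomial growth bound $|f(z)| \le C(1+|z|)^K$ valid for global quasiconformal maps, one gets that $f_z$ is a polynomial in an appropriate sense, and then Theorem~\ref{MuConstantLemma} (or the fact that $f_z$ being a nonvanishing ``polynomial'' must be constant) finishes the job. The main obstacle I anticipate is the global control: turning the local autonomous structure from Lemma~\ref{DirectionalLemma}(2) into a genuinely global statement, since a priori $f_z$ may fail to be injective on a discrete set and one must check that the field $\hhh$ obtained on different local pieces is consistent — this is where one needs the global injectivity of $f$ together with the Sto\"ilow factorization of $f_z$ to glue everything, and to rule out nonconstant polynomial behaviour of $f_z$ one must use that $Df$ is nonvanishing for a homeomorphism.
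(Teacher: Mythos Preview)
Your outline shares the right opening moves with the paper's proof: apply Lemma~\ref{DirectionalLemma}(1) to see that $f_z$ is globally $K$-quasiregular, then Sto\"ilow-factor $f_z = \Phi \circ h$ with $\Phi$ entire and $h$ quasiconformal. But the key step in your proposal is the assertion that ``the growth constraint coming from $f$ being a homeomorphism of the whole plane (so $|f(z)| \lesssim |z|^K$\dots) forces $\Phi$ to be a polynomial.'' This is exactly the substantive part of the argument, and it is not justified. A bound on $|f|$ does not yield a pointwise polynomial bound on $f_z$; the derivative of a global quasiconformal map can be highly irregular, and you only have integrability information such as $Df \in L^p_{\loc}$ for $p < \tfrac{2K}{K-1}$. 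So as written the proof has a genuine gap at its central step.

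The paper avoids this gap by not trying to show $\Phi$ is a polynomial at all. Instead it first uses Lemma~\ref{posJacLemma} to show that the Jacobian of $f$ is strictly positive \emph{everywhere} (your phrase ``$Df$ is nonvanishing for a homeomorphism'' is not true in general and needs precisely this lemma), hence $\Phi$ is a \emph{nonvanishing} entire function. Then the dichotomy is clean: either $\Phi$ is constant (done), or $\Phi$ is nonvanishing and nonconstant, in which case it has an essential singularity at $\infty$. The paper rules out the latter by passing to $\tilde f(z) = 1/f(1/z)$, which is quasiconformal at $0$, computing $\partial_z \tilde f$ explicitly in terms of $\tilde\Phi(z)=\Phi(1/z)$, and using the local $L^2$-integrability of $D\tilde f$ together with the growth bound $|f(z)| \leq C|z|^K$ and H\"older's inequality to deduce that $\tilde\Phi$ has a finite $L^\gamma$-norm near $0$ for some $\gamma>0$. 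That is incompatible with an essential singularity, giving the contradiction. Your proposal would be repaired by replacing the unproven ``$\Phi$ is a polynomial'' step with this nonvanishing/essential-singularity/integrability argument; the alternative blow-up route via Theorem~\ref{MuConstantLemma} that you sketch does not appear to lead to a proof without a similar missing ingredient.
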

The proof of this theorem utilizes the following lemma, which we state by itself as it is also a general statement about maps with $K$-quasiregular directional derivatives.
\begin{lem}\label{posJacLemma} Let $f : \Omega \to \cc$ be a map with $K$-quasiregular directional derivatives. Then the Jacobian of $f$ is strictly positive at the points $z_0 \in \Omega$ where $f$ is a local homeomorphism.
\end{lem}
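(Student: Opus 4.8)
The plan is to argue by contradiction at a point $z_0$ where $f$ is a local homeomorphism but $J_f(z_0) = 0$. Since $f$ has $K$-quasiregular directional derivatives, by part (1) of Lemma \ref{DirectionalLemma} the complex gradient $f_z$ is $K$-quasiregular, so by Sto\"ilow factorization $f_z = \Phi \circ \omega$ with $\Phi$ holomorphic and $\omega$ quasiconformal; in particular $f_z$ is continuous and its zero set is discrete. Away from the zeros of $f_z$ the quasiregularity of directional derivatives gives $|f_{\bz}| \leq k |f_z| < |f_z|$ (using the $K$-quasiregularity of $f$ itself together with $|\mu| \leq k$ from Lemma \ref{MuNuLemma}), hence $J_f = |f_z|^2 - |f_{\bz}|^2 > 0$ at every point where $f_z \neq 0$. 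So the only candidates for $J_f(z_0) = 0$ are the points where $f_z(z_0) = 0$.

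Thus I would reduce to showing: if $f$ is a local homeomorphism at $z_0$, then $f_z(z_0) \neq 0$. First I would note that near such a point we may apply part (2) of Lemma \ref{DirectionalLemma} only where $f_z$ is a local homeomorphism, so this route needs care; instead I would work directly. Since $f_z$ is $K$-quasiregular with $f_z(z_0) = 0$, the Sto\"ilow factorization shows $f_z$ has a local degree $m \geq 1$ at $z_0$, i.e. $f_z$ behaves like $z \mapsto (\omega(z) - \omega(z_0))^m$ up to a holomorphic unit times a quasiconformal change of variables; integrating, $f$ itself behaves near $z_0$ like $z_0' + c\,(\omega(z)-\omega(z_0))^{m+1} + (\text{antiholomorphic-in-}\omega\ \text{lower order})$. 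The key point is that $f_{\bz}$ is controlled by $f_z$ via \eqref{dirqc}, so $f_{\bz}$ also vanishes to order $\geq m$ at $z_0$, and hence $f$ has a genuine critical point of local degree $m+1 \geq 2$ there. A map with a critical point of degree $\geq 2$ cannot be a local homeomorphism — this contradicts the hypothesis.

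The main obstacle I expect is making the last step rigorous: one must show that vanishing of $Df$ to positive order at $z_0$, for a map whose directional derivatives are $K$-quasiregular (so in particular $f$ is itself $K$-quasiregular by hypothesis), genuinely forces non-injectivity in every neighborhood. For a $K$-quasiregular $f$ this follows because $f$ admits a Sto\"ilow factorization $f = h \circ \chi$ with $h$ holomorphic and $\chi$ quasiconformal, so $f$ is a local homeomorphism at $z_0$ if and only if $h$ is, i.e. if and only if $h'(\chi(z_0)) \neq 0$, which is equivalent to $J_f(z_0) \neq 0$ off the (discrete) branch set. Hence "$f$ is a local homeomorphism at $z_0$" already forces $J_f(z_0) > 0$ for quasiregular $f$ — so in fact the lemma is almost immediate from the quasiregularity of $f$ plus $\operatorname{dim}$ of the branch set, and the role of the directional-derivative hypothesis is only to supply $|f_{\bz}| \leq k|f_z|$ so that the open mapping / Sto\"ilow machinery applies cleanly. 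I would therefore present the proof as: (i) $f_z$ is $K$-quasiregular, continuous, with discrete zero set (Lemma \ref{DirectionalLemma}(1), Sto\"ilow); (ii) where $f_z \neq 0$, $J_f = |f_z|^2 - |f_{\bz}|^2 \geq (1-k^2)|f_z|^2 > 0$; (iii) at a zero of $f_z$, the Sto\"ilow factorization of the quasiregular map $f$ shows the local degree is $\geq 2$, contradicting the local homeomorphism assumption. The one genuinely delicate point, and the place I would spend the most care, is step (iii): ruling out that $f_z(z_0)=0$ while $f$ is still injective near $z_0$, which amounts to checking that the Sto\"ilow factor $\chi$ of $f$ is regular at $z_0$ precisely when $f_z(z_0)\neq 0$ — equivalently, that $f_z$ cannot vanish at a non-branch point of $f$.
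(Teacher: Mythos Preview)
There is a genuine gap: you repeatedly assume $f$ itself is $K$-quasiregular --- the bound $|f_{\bar z}|\le k|f_z|$ in step~(ii) and the Sto\"ilow factorization of $f$ in step~(iii) --- and even assert this is ``by hypothesis''. It is not. Condition~\eqref{dirqc} constrains only the second-order quantities $f_{zz},f_{z\bar z},f_{\bar z\bar z}$, and Lemma~\ref{MuNuLemma} bounds $\mu=f_{z\bar z}/f_{zz}$, not $f_{\bar z}/f_z$. Indeed $f(z)=\bar z$ has constant (hence trivially $K$-quasiregular) directional derivatives, is a global homeomorphism, yet $J_f\equiv-1$; so the lemma as literally stated actually needs an orientation hypothesis, and your steps (ii)--(iii) cannot be recovered from \eqref{dirqc} alone. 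The paper itself only ever applies the lemma to a quasiconformal $f$ (Theorem~\ref{GlobalAffine}), and the proof it cites from \cite[Theorem~3.3]{ACFJK} is written for solutions of the autonomous equation \eqref{autbel1}, which \emph{are} automatically $K$-quasiregular --- this is the extra structure your argument is silently importing.

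Even granting quasiregularity of $f$, your step~(iii) is not yet rigorous: from $f=h\circ\chi$ one has $f_z=(h'\circ\chi)\,\chi_z$ only at points of differentiability of $\chi$, and nothing prevents $\chi_z$ (rather than $h'\circ\chi$) from being the factor that vanishes at the single point $z_0$, so one cannot read off $h'(\chi(z_0))=0$ directly. A clean way to close this is to argue on difference quotients instead: by local injectivity of $f$ the maps $z\mapsto (f(z+h\theta)-f(z))/h$ are nonvanishing for small $h$, they converge locally uniformly to $\partial_\theta f$, and if they are $K$-quasiregular a Hurwitz-type argument forces $\partial_\theta f(z_0)\neq 0$ for every $\theta$, hence $J_f(z_0)\neq 0$. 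This is essentially the route in \cite{ACFJK}; note it again uses more than \eqref{dirqc}, namely quasiregularity of the increments, which by the Proposition following Lemma~\ref{DirectionalLemma} is equivalent to $f$ solving an autonomous equation.
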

\begin{proof}
This proof is essentially done in \cite[Theorem 3.3]{ACFJK}. While the cited result is stated only for solutions to autonomous Beltrami equations, on closer inspection the proof only utilizes the fact that the directional derivatives are $K$-quasiregular. Hence the same proof also applies to the statement of Lemma \ref{posJacLemma}.
\end{proof}
We are now ready to prove Theorem \ref{GlobalAffine}.
\begin{proof}[Proof of Theorem \ref{GlobalAffine}]
First of all, we recall from Lemma \ref{DirectionalLemma} that we know that $f_z:\cc \to \cc$ is a quasiregular map. Hence $f_z = A \circ h$ for some entire function $A$ and a global quasiconformal map $h$ with $h(0) = 0$ by Sto\"ilow's factorization. Since $f$ is a homeomorphism, Lemma \ref{posJacLemma} implies that the Jacobian $|f_z|^2 - |f_{\bz}|^2$ is positive. In particular $A \neq 0$ everywhere. If $A$ is a constant function then $f$ is affine and we would be done, so let us assume the contrary.

From the above we conclude that $A$ is a nonvanishing, nonconstant entire function. Thus in particular it has an essential singularity at infinity, and the holomorphic function $\tilde{A}(z) = A(1/z)$ has an essential singularity at $z = 0$. However, since global quasiconformal maps extend quasiconformally to the whole Riemann sphere, we also know that the function $\tilde{f}(z) = 1/f(1/z)$ is quasiconformal at $z=0$. If we also denote $\tilde{h}(z) = 1/h(1/z)$, we may compute the Cauchy-Riemann derivative of $\tilde{f}$ as follows.
\[\frac{\partial \tilde{f}}{\partial z} (z) = \frac{f_z(1/z)}{z^2 f(1/z)^2} = \frac{f_z(1/z)}{z^2 f(1/z)^2} =  \frac{\tilde{A}\circ \tilde{h}(z)}{z^2 f(1/z)^2}.\]

Since $\tilde{f}$ is quasiconformal at $z=0$, its derivatives are locally in $L^2$. In a small disc $D_0$ we thus have that
\[\int_{D_0} \frac{|\tilde{A}\circ \tilde{h}(z)|^2}{|z|^4 |f(1/z)|^4} dm(z) < \infty.\]
Quasiconformality of $f$ gives the estimate $|f(z)| \leq C |z|^K$ for large $z$. Applying this estimate in $D_0$ gives that for the exponent $\alpha = 4(K-1)$ we have
\[\int_{D_0} |\tilde{A}\circ \tilde{h}(z)|^2 |z|^{\alpha} dm(z) < \infty.\]
We can now get rid of the factor $|z|^{\alpha}$ by using H\"older's inequality, utilizing the fact that $|z|^{-\beta}$ is integrable in $D_0$ for $\beta < 1$. This implies that
\[\int_{D_0} |\tilde{A}\circ \tilde{h}(z)|^\gamma dm(z) < \infty\]
for some positive number $\gamma$. We may also make a change of variables to find out that
\[\int_{D_1} |\tilde{A}(\omega)|^\gamma J_{\tilde{h}^{-1}}(\omega)dm(\omega) < \infty\]
for some small disc $D_1$ centered at zero. We can use H\"older's again to get rid of the extra factor $J_{\tilde{h}^{-1}}$, utilizing the fact that by the quasiconformality of $\tilde{h}^{-1}$ the expression $J_{\tilde{h}^{-1}}^{-\beta}$ is locally integrable for small positive $\beta$. This again leads to
\[\int_{D_1} |\tilde{A}(\omega)|^{\tilde{\gamma}}dm(\omega) < \infty\]
for some positive number $\tilde{\gamma}$. 

This will be a contradiction since $A$ is a holomorphic function with an essential singularity at zero, and thus it cannot be integrable to any positive power. This fact is true in general but in our case we can also simply use the fact that $A$ doesn't vanish to see that the function $B = A^{\tilde{\gamma}/2}$ is a well-defined holomorphic function that is locally in $L^2$ about the origin. Now by looking at the Laurent series expansion of $B$ one quickly sees that $B$ must have a removable singularity at $z = 0$. This gives a contradiction with the fact that $A$ has an essential singularity, and proves our claim.
\end{proof}

\section{General Schauder estimates}

This section is devoted to proving Theorem \ref{SchauderforH}, that is to establish  Schauder estimates of $W^{1,2}$-solutions to the inhomogeneous nonlinear Beltrami equation 
\begin{equation}\label{hqrjac}
f_{\zbar} = \cH(z, f_{z}) + G(z)  \qquad \text{a.e. \, in} \;\;  \Omega.
\end{equation}
Recall that  the strong ellipticity of the equation is encoded in the fact that the function $(z, \zeta) \mapsto \cH(z, \zeta)$ is $k$-Lipchitz in the variable $\zeta$,
where $k<1$.

Let us recall our regularity assumptions on the structure field $\cH$. Throughout this section we will assume H\"older continuity of $\cH$ in the variable $z$ and $k$-Lipschitz dependence on the variable $\zeta$. More precisely, given an open bounded set $\Omega \subset \C$, we assume that 
\begin{equation}\label{Hcondition}
\aligned
&|\cH(z_1, \zeta_1) - \cH(z_2, \zeta_2)|\leq \mathbf{H}_{\alpha}(\Omega)|z_1 - z_2|^{\alpha}(|\zeta_1| + |\zeta_2|) + k\,|\zeta_1 - \zeta_2|,\\
&\cH(z_1, 0) \equiv 0,
\endaligned
\end{equation}
for all $z_1, z_2 \in \Omega$, $\zeta_1, \zeta_2 \in \C$, where $\alpha \in (0,1)$ and  $k = \frac{K - 1}{K + 1} < 1$ are fixed. 

When $\cH$ is linear in the gradient variable (i.e., the Beltrami equation is linear), \eqref{Hcondition} implies 
that the derivatives of the solutions to Beltrami equation  are $\alpha$-H\"older continuous (\cite{LU}, see also \cite[Chapter 15]{AIM}). Our goal is to show that similar regularity results hold in the general nonlinear case.

\begin{thm}\label{schauder} 
Assume that the structure field $\cH$ satisfies the assumptions \eqref{Hcondition}. Suppose also that $G \in C^{\alpha}(\Omega,\cc)$ is a given function.
Then any solution $f : \Omega \to \C$ of the equation \eqref{hqrjac} lies in the regularity class $C^{1,\gamma}_{\loc}$, where $\gamma$ is any positive number satisfying
\begin{itemize}
\item{$\gamma \leq \alpha$ and }
\item{$\gamma < \beta_K$, where $\beta_K$ is the largest exponent such that a solution to the autonomous Beltrami equation \eqref{autbel1} always lies in the class $C^{1,\beta_K}_{\loc}$.}
\end{itemize}

Moreover, we have a norm bound, when  $\D(\omega, 2R) \Subset \Omega$,
\begin{equation}\label{thmnorm}
{\|D_zf\|}_{C^\gamma(\D(\omega, R))}
\leq c(K, \alpha, \gamma, \omega, R, \mathbf{H}_\alpha(\Omega))\left(\|D_z f\|_{L^2(\D(\omega, 2R))} +  \|G \|_{C^{\alpha}(\D(\omega, 2R))}\right)\!.\end{equation}

\end{thm}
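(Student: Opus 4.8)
The plan is to run a perturbation (frozen-coefficient) argument à la Caccioppoli--Campanato, comparing $f$ locally to solutions of an autonomous Beltrami equation, and then to bootstrap the gain in H\"older exponent coming from Theorem \ref{ImprovedRegularity}. First I would localize: fix a disc $\D(\omega,2R)\Subset\Omega$ and, for each $z_0$ in a slightly smaller disc and each small $\rho$, freeze the first variable of $\cH$ at $z_0$, writing the equation on $\D(z_0,\rho)$ as
\[
f_{\zbar}=\cH(z_0,f_z)+\big(\cH(z,f_z)-\cH(z_0,f_z)\big)+G(z).
\]
The assumption \eqref{Hcondition} controls the middle term by $\mathbf{H}_\alpha(\Omega)\rho^\alpha(|f_z|)$ and the last term is $C^\alpha$, so both are small perturbations of size $O(\rho^\alpha)$ in the relevant norm. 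I would then solve the autonomous Dirichlet-type (or Cauchy transform) problem $g_{\zbar}=\cH(z_0,g_z)$ with $g-f$ having zero boundary data on $\D(z_0,\rho)$ (existence of such $g$ via the standard fixed-point/Cauchy-transform scheme for nonlinear Beltrami equations, as in \cite{AIM}), and estimate $\|f-g\|$ on $\D(z_0,\rho)$ in terms of the size of the perturbation.

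The heart of the argument is the decay estimate for $\int_{\D(z_0,r)}|D_zf-(D_zf)_{z_0,r}|^2$. For the autonomous comparison map $g$, Theorem \ref{ImprovedRegularity} gives $g\in C^{1,\beta_K}_{\loc}$, hence $D_zg$ satisfies a Campanato decay of order $2\beta_K$; more precisely one has a Caccioppoli-type inequality plus the $C^{1,\beta_K}$ regularity, so
\[
\int_{\D(z_0,r)}\big|D_zg-(D_zg)_{z_0,r}\big|^2\;\le\;C\Big(\frac r\rho\Big)^{2+2\beta_K}\int_{\D(z_0,\rho)}\big|D_zg-(D_zg)_{z_0,\rho}\big|^2 .
\]
Splitting $D_zf=D_z(f-g)+D_zg$, using this decay for the $g$-part and the perturbation bound $\|D_z(f-g)\|_{L^2(\D(z_0,\rho))}\le C\rho^\alpha(\rho^{?}+\cdots)$ for the remainder, one obtains an iteration inequality of the form $\Phi(r)\le C[(r/\rho)^{2+2\gamma'}\Phi(\rho)+\rho^{2+2\gamma}(\text{data})]$ for the excess functional $\Phi$, valid for any $\gamma<\beta_K$ and $\gamma\le\alpha$. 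A standard iteration lemma (Campanato / Giaquinta) then yields $\Phi(r)\le C r^{2+2\gamma}$, which by the Morrey--Campanato characterization gives $D_zf\in C^{\gamma}_{\loc}$ together with the quantitative bound \eqref{thmnorm}. The Caccioppoli inequality needed to start the iteration and to absorb the perturbation terms for $\cH$ with merely $\mathrm{VMO}$ (here $C^\alpha$, hence certainly $\mathrm{VMO}$) dependence in $z$ is exactly Theorem \ref{vmolocalestimate} from the appendix, which I would invoke here.

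I expect the main obstacle to be the nonlinearity in two intertwined places. First, the comparison estimate $\|f-g\|$: unlike the linear case one cannot simply subtract the equations, and the $k$-Lipschitz bound \eqref{lip31} only controls $f_{\zbar}-g_{\zbar}$ by $k|f_z-g_z|$ plus the frozen-coefficient error, so closing the estimate for $D_z(f-g)$ requires the Caccioppoli inequality for the nonlinear equation and a careful use of $k<1$ to absorb the $k|f_z-g_z|$ term (this is why the VMO-Caccioppoli of Theorem \ref{vmolocalestimate} is essential rather than cosmetic). Second, the perturbation term $\cH(z,f_z)-\cH(z_0,f_z)$ carries a factor $|f_z|$, not $|f_z-(f_z)_{z_0,\rho}|$, so one must control $\rho^\alpha\|f_z\|_{L^2(\D(z_0,\rho))}$; this is handled by an a priori Morrey-type bound on $\|f_z\|_{L^2}$ obtained by first running the iteration with the (already known) exponent $1/K$ and then bootstrapping, i.e. the argument is run twice, first to get any Morrey decay and then to upgrade to the sharp $\gamma<\beta_K$. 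Keeping track of the dependence of all constants on $K,\alpha,\gamma,\omega,R$ and $\mathbf{H}_\alpha(\Omega)$ through this two-step bootstrap is the bookkeeping-heavy but routine part; the conceptual input is entirely the improved autonomous regularity $\beta_K$ from Theorem \ref{ImprovedRegularity}.
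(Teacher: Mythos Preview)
Your proposal is essentially the paper's own argument: freeze the $z$-variable, compare with an autonomous solution, feed in the improved Campanato decay from Theorem \ref{ImprovedRegularity} (stated as Corollary \ref{holder1K}), iterate via the Giaquinta lemma (Lemma \ref{growth}), and bootstrap in two passes. Two execution details differ from what you describe. First, the comparison map is produced not by a Dirichlet problem but by the Riemann--Hilbert problem $\Re(f-F)=0$ on $\partial\D_R$ (Proposition \ref{splitting}); the point is that this boundary condition makes the local Beurling transform an $L^2$-isometry, giving $\|(f-F)_z\|_{L^2(\D_R)}=\|(f-F)_{\zbar}\|_{L^2(\D_R)}$, and \emph{this} is how the $k|f_z-F_z|$ term is absorbed---no Caccioppoli inequality is needed at that step. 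Second, the Caccioppoli estimate (Theorem \ref{vmolocalestimate}, via Lemma \ref{cacciopp2}) enters only in the second pass, to convert the preliminary regularity $f\in C^\beta_{\loc}$ for all $\beta<1$ (obtained in the first pass from Corollary \ref{holder1K}(1), not from any a priori $1/K$-exponent for $f$ itself, which is unavailable since $f$ need not be quasiregular when $G\not\equiv 0$) into the bound $\|f_z\|_{L^2(\D_R)}\lesssim R^\beta$ that controls the perturbation term $\mathbf{H}_\alpha R^\alpha\|f_z\|_{L^2(\D_R)}$.
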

With data $G \equiv 0$ Theorem~\ref{schauder} is proven in \cite{ACFJK}, and our proof follows the same line of arguments.

 We will first recall the regularity results of the autonomous case (shown in the previous sections)  and then,  in the spirit of classical Schauder estimates \cite{s1}, \cite{s2}, tackle the general case by perturbation.

\subsection{The autonomous equation and integral estimates}\label{autosec}


\begin{prop}\label{ccc}
Let $F  \in W^{1,2}_{\loc}(\Omega, \C)$ be a solution to the  autonomous inhomogeneous nonlinear Beltrami equation 
\begin{equation}\label{auto}
F_{\zbar} = \cH(F_{z})  + c_0 \qquad \text{for a.e. $z\in \Omega$},
\end{equation}
where $c_0$ is a complex constant. 
Then the directional derivatives of $F$ are $K$-quasiregular, $K = \frac{1 + k}{1 - k}$.
\end{prop}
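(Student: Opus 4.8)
The plan is to reduce the inhomogeneous autonomous equation \eqref{auto} to the homogeneous one \eqref{autbel1} by absorbing the constant $c_0$ into an affine modification of the unknown, and then to invoke Proposition \ref{directionalderivativesKqr}.

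First I would set $G(z) := F(z) - c_0\,\bar z$. Since $F \in W^{1,2}_{\loc}(\Omega,\C)$ and $z \mapsto c_0\bar z$ is smooth, also $G \in W^{1,2}_{\loc}(\Omega,\C)$, and its complex derivatives are $G_z = F_z$ and $G_{\bz} = F_{\bz} - c_0$. Substituting into \eqref{auto} gives
\[ G_{\bz} = F_{\bz} - c_0 = \cH(F_z) = \cH(G_z) \qquad \text{a.e.\ in } \Omega, \]
so $G$ is a $W^{1,2}_{\loc}$-solution of the homogeneous autonomous Beltrami equation \eqref{autbel1} with the very same $k$-Lipschitz field $\cH$. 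Here one uses that $\cH$ is the autonomous instance of a structure field satisfying \eqref{Hcondition}, so $\cH(0)=0$ and $|\cH(\zeta_1)-\cH(\zeta_2)|\le k|\zeta_1-\zeta_2|$; thus the hypotheses of Proposition \ref{directionalderivativesKqr} are met, and we conclude that every directional derivative $\partial_\theta G$ is $K$-quasiregular.

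It then remains to transfer the conclusion from $G$ back to $F$. For a unit vector $\theta$ one has $\partial_\theta F = \partial_\theta G + \partial_\theta(c_0\bar z) = \partial_\theta G + \bar\theta\,c_0$, i.e.\ $\partial_\theta F$ differs from $\partial_\theta G$ only by an additive constant. Since adding a constant changes neither $(\partial_\theta F)_z = (\partial_\theta G)_z$ nor $(\partial_\theta F)_{\bz} = (\partial_\theta G)_{\bz}$, the distortion inequality $|(\partial_\theta G)_{\bz}| \le k\,|(\partial_\theta G)_z|$ carries over verbatim to $\partial_\theta F$; equivalently, \eqref{dirqc} holds for $F$. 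Hence the directional derivatives of $F$ are $K$-quasiregular, as claimed.

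There is essentially no obstacle here; the only point requiring minor care is checking that the reduced equation $G_{\bz}=\cH(G_z)$ still meets the exact hypotheses of Proposition \ref{directionalderivativesKqr} — namely that $\cH$ is $k$-Lipschitz with $\cH(0)=0$, so that $G$ is genuinely quasiregular and the a.e.\ distortion bound for its directional derivatives is available. Alternatively, one could bypass the substitution entirely and argue directly with difference quotients $F^{h,\theta}(z)=\bigl(F(z+h\theta)-F(z)\bigr)/h$: the constant $c_0$ cancels in $F^{h,\theta}_{\bz}=\bigl(\cH(F_z(z+h\theta))-\cH(F_z(z))\bigr)/h$, giving $|F^{h,\theta}_{\bz}|\le k\,|F^{h,\theta}_z|$, and one then passes to the limit exactly as in the proof of Proposition \ref{directionalderivativesKqr}.
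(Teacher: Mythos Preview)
Your proof is correct. The paper proceeds directly via the difference quotients $F_h(z)=\bigl(F(z+h\theta)-F(z)\bigr)/h$, observing that the constant $c_0$ cancels so that $|F_{h,\bz}|\le k\,|F_{h,z}|$ exactly as in Proposition~\ref{directionalderivativesKqr}; this is precisely the ``alternative'' you sketch at the end, while your main argument via the affine substitution $G=F-c_0\bar z$ is an equivalent repackaging of the same cancellation.
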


 Since $c_0$ does not depend on $z$, 
the difference quotients 
$$
F_h(z) := \frac{F(z + h\theta) - F(z)}{h}, \qquad |\theta| = 1, \quad h > 0. 
$$
are $K$-quasiregular as in Proposition \ref{directionalderivativesKqr}, \cite[Proposition 2.1]{ACFJK}. Therefore,  the directional derivatives inherit the properties of $K$-quasiregular maps. In particular, Theorem \ref{ImprovedRegularity} implies that the derivative $D_zF$ of a solution to the autonomous equation \eqref{auto} is locally $\alpha_K$-H\"older continuous. For perturbation arguments it is particularly useful to formulate this in a Morrey-Campanato form.

\begin{cor}\label{holder1K}
 If $F$ is as in Proposition~\ref{ccc}, the derivative $D_z F$ is  locally $\alpha_K$-H\"older continuous. Moreover,
\begin{enumerate}
\item for every $\D(z_0, \rho) \subset \D(z_0, R) \subset \Omega$,
$$
\| D_z F \|_{L^2(\D(z_0, \rho))} \leq c(K)\,\frac{\rho}{R}\;\| D_z F \|_{L^2(\D(z_0, R))}.
$$
\item For every $\D(z_0, \rho) \subset \D(z_0, R) \subset \Omega$,
 $$
 \|D_z F-(D_z F)_\rho\|_{L^2(\D(z_0,\rho))} \leq  c(K)\left(\frac\rho{R}\right)^{1 +  \alpha_{K}}\,\|D_z F-(D_z F)_{R}\|_{L^2(\D(z_0,R))}
$$
 where $(D_z F)_r = \fint_{\D(z_0, r)} D_z F$.
\end{enumerate}
\end{cor}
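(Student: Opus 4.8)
The plan is to deduce both estimates from the improved H\"older regularity of $D_z F$ together with the fact that $F$ (hence $D_z F$) has $K$-quasiregular directional derivatives, which by Lemma~\ref{DirectionalLemma}(1) makes $D_z F$ itself $K$-quasiregular. The two inequalities are the standard Morrey--Campanato growth estimates one expects for a $K$-quasiregular map whose gradient is additionally $\alpha_K$-H\"older. First I would record that Proposition~\ref{ccc} gives that the difference quotients of $F$ are $K$-quasiregular and converge to the directional derivatives, so $D_z F$ is a $K$-quasiregular map in $\Omega$; moreover by Theorem~\ref{ImprovedRegularity} (via Theorem~\ref{ImprovedAuto}) $F \in C^{1,\alpha_K}_{\loc}$, i.e. $D_z F \in C^{\alpha_K}_{\loc}$.

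For part (1): a $K$-quasiregular map $w = D_z F$ satisfies the energy decay estimate
\[
\int_{\D(z_0,\rho)} |Dw|^2 \, dm \leq c(K) \left(\frac{\rho}{R}\right)^2 \int_{\D(z_0,R)} |Dw|^2 \, dm
\]
for concentric discs, which is the classical higher-integrability/Caccioppoli-plus-reverse-H\"older consequence for quasiregular maps; since $|Dw|^2 \sim |D^2 F|^2$ up to constants depending only on $k$ (by Lemma~\ref{DirectionalLemma} and Lemma~\ref{MuNuLemma}, which bound $|f_{z\bar z}|, |f_{\bar z \bar z}|$ by $|f_{zz}|$), this transfers to the stated bound on $\|D_z F\|_{L^2}$ after noting $|D_z F|_{W^{1,2}} = \|D(D_z F)\|_{L^2}$ controls the full second derivative. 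Alternatively, and perhaps more directly in the spirit of the paper's argument, I would invoke the $C^{1,\alpha_K}$ regularity: on $\D(z_0,R)$ the oscillation of $D_z F$ is controlled, so $\int_{\D(z_0,\rho)}|D_z F|^2 \leq c\,\rho^2 \big(\sup_{\D(z_0,R)}|D_z F|\big)^2$, and $\sup_{\D(z_0,R/2)}|D_z F|^2 \leq c R^{-2}\int_{\D(z_0,R)}|D_z F|^2$ by the mean value / subharmonicity-type estimate for quasiregular maps (Caccioppoli in reverse), giving the claim after a standard covering/iteration to pass from $R/2$ to $R$.

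For part (2): here the key input is precisely the improved exponent $\alpha_K$. Since $D_z F \in C^{1,\alpha_K}$, for $z \in \D(z_0,\rho)$ one has the Taylor-type bound
\[
|D_z F(z) - (D_z F)_\rho| \leq |D_z F(z) - D_z F(z_0)| + |D_z F(z_0) - (D_z F)_\rho| \leq c\, \rho^{\,\alpha_K}\,[D_z F]_{C^{\alpha_K}(\D(z_0,\rho))},
\]
while $[D_z F]_{C^{\alpha_K}(\D(z_0,R/2))}$ is in turn controlled by $R^{-1-\alpha_K}\|D_z F - (D_z F)_R\|_{L^2(\D(z_0,R))}$ via the Morrey--Campanato characterization combined with interior estimates for the quasiregular map $D_z F$ (the Campanato seminorm of order $1+\alpha_K$ is comparable to the $C^{\alpha_K}$ seminorm of the gradient). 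Squaring, integrating over $\D(z_0,\rho)$, and collecting powers of $\rho/R$ yields the exponent $1+\alpha_K$ as stated. One must be a little careful that the estimates are uniform in $z_0$ and scale correctly; the cleanest route is to prove everything first for $R$ replaced by $R/2$ using interior regularity on $\D(z_0,R)$, then note the constants only depend on $K$.

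The main obstacle I anticipate is not conceptual but bookkeeping: making the passage from the pointwise/sup-norm $C^{1,\alpha_K}$ information back to the $L^2$-Campanato form with constants depending only on $K$ (not on the domain), and in particular justifying the interior estimate $[D_z F]_{C^{\alpha_K}(\D(z_0,R/2))} \lesssim R^{-1-\alpha_K}\|D_z F-(D_z F)_R\|_{L^2(\D(z_0,R))}$ uniformly. This requires that Theorem~\ref{ImprovedRegularity}'s proof be quantitative — which it is, since it proceeds through the Morrey--Campanato inequality \eqref{MorreyCamp} for $j_f$ — so the honest thing is to re-read \eqref{MorreyCamp} as exactly the statement $J(r) \leq c(K)(r/R)^{2\alpha_K} J(R)$ with $J(r) = \int_{\D_r} j_f$, and then combine $j_f \sim |D^2 F|^2$ with a one-step Campanato iteration to obtain (2), while (1) is the even simpler consequence $\int_{\D_\rho}|D_z F|^2 \lesssim (\rho/R)^2\int_{\D_R}|D_z F|^2$ coming from quasiregularity of $D_z F$ alone (independent of the improved exponent).
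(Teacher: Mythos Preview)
The paper states Corollary~\ref{holder1K} without proof; the intended justification is that both estimates are the standard Morrey--Campanato reformulations of (i) the continuity and quasiregularity of the directional derivatives (for part~(1)) and (ii) the quantitative inequality \eqref{MorreyCamp} from the proof of Theorem~\ref{ImprovedRegularity} (for part~(2)). Your final paragraph identifies exactly this route --- read \eqref{MorreyCamp} as $J(r)\le c(K)(r/R)^{2\alpha_K}J(R)$, use $j_f\sim|D^2F|^2$, combine with Poincar\'e on one side and the Caccioppoli inequality for the quasiregular directional derivatives on the other to get~(2), and use the $\sup$-estimate for those directional derivatives to get~(1) --- and that is correct and is what the paper has in mind.

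Your \emph{first} attempt at part~(1), however, contains a real error. You assert that for a $K$-quasiregular map $w$ one has
\[
\int_{\D_\rho}|Dw|^2 \le c(K)\Bigl(\frac{\rho}{R}\Bigr)^{2}\int_{\D_R}|Dw|^2.
\]
This is false: the Morrey decay exponent for the Dirichlet energy of a $K$-quasiregular map is $2/K$ (or $2\alpha_K$ in the improved setting of this paper), not $2$, and the radial stretch shows sharpness. Moreover, even with the correct exponent this is an estimate for $|D^2F|^2$, not for $|D_zF|^2$, so it does not ``transfer'' to the stated bound without a further step. Your alternative --- bound $\sup_{\D_{R/2}}|\partial_\theta F|$ by $c(K)R^{-1}\|\partial_\theta F\|_{L^2(\D_R)}$ using the $\sup$-estimate for quasiregular maps, then use the trivial $\int_{\D_\rho}|D_zF|^2\le\pi\rho^2\sup|D_zF|^2$ --- is the right argument. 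Note also the minor slip that $D_zF$ itself is not ``a $K$-quasiregular map'' (it is vector-valued); it is $F_z$ and each directional derivative $\partial_\theta F$ that are $K$-quasiregular, and the $\sup$/Caccioppoli estimates should be applied to those.
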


\subsection{The Riemann-Hilbert problem}

The solution of the Riemann Hilbert problem is well-known; the proof is based on the local versions of the classical Cauchy transform and the Beurling transform.

\begin{prop}\label{splitting}
Let  $f$ be a solution to the inhomogeneous nonlinear Beltrami equation \eqref{hqrjac}, and suppose $\D(z_0, R) \Subset \Omega$. Then there exists a unique solution $F \in W^{1, 2}(\D(z_0, R), \C)$ to the following local Riemann-Hilbert problem for the inhomogeneous autonomous equation
\begin{equation}\label{Split}
\begin{cases}
F_{\zbar} = \cH(z_0,F_{z})  + (G)_R & \text{a.e. $z \in \D(z_0, R)$}, \\
\Re(f - F) = 0 & \text{on $\partial \D(z_0, R)$}
\end{cases}
\end{equation}
where $(G)_R = \fint_{\D(z_0, R)} G$. 
Furthermore, $\| F_{\zbar}  - f_{\zbar} \|_{L^2(\D_R)} = \| F_{z} - f_z\|_{L^2(\D_R)}$ and we have a norm bound
\begin{equation}\label{L2bound}
\|D_z F \|_{L^2(\D_R)} \leq 2 K \|D_z f \|_{L^2(\D_R)} + \|G - (G)_R\|_{L^2(\D_R)}.
\end{equation}
\end{prop}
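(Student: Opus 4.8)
The plan is to recast the Riemann-Hilbert problem \eqref{Split} as a Banach fixed point problem by means of the solution operator of the \emph{linear} inhomogeneous Cauchy-Riemann equation on the disk $\D_R := \D(z_0,R)$ with Riemann-Hilbert (vanishing real part) boundary data, and then to read the two norm assertions off the structure of that operator. Recall from the classical theory (see e.g. \cite{Vekua}, \cite{AIM}) that for every $\omega \in L^2(\D_R,\C)$ there is a function $\cC_R\omega \in W^{1,2}(\D_R,\C)$, unique up to an additive purely imaginary constant, with
\[
\partial_{\zbar}(\cC_R\omega) = \omega \ \text{ in } \D_R, \qquad \Re(\cC_R\omega) = 0 \ \text{ on } \partial\D_R,
\]
built from the solid Cauchy transform on $\D_R$ together with a holomorphic correction expressed through the Beurling transform. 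Writing $\cB_R\omega := \partial_z(\cC_R\omega)$, the single fact I would lean on is the isometry
\begin{equation}\label{RHisom}
\|\cB_R\omega\|_{L^2(\D_R)} = \|\omega\|_{L^2(\D_R)}, \qquad \omega \in L^2(\D_R,\C),
\end{equation}
which is immediate from $\int_{\D_R}\bigl(|\partial_z h|^2 - |\partial_{\zbar}h|^2\bigr)\,dm = \int_{\D_R} J_h\,dm = \int_{\partial\D_R}\Re h\,d(\Im h) = 0$ applied to $h = \cC_R\omega$, whose real part vanishes on $\partial\D_R$. I would also use the converse: any $h \in W^{1,2}(\D_R,\C)$ with $\Re h = 0$ on $\partial\D_R$ satisfies $\partial_z h = \cB_R(\partial_{\zbar}h)$, since $h - \cC_R(\partial_{\zbar}h)$ is then holomorphic with vanishing real part on $\partial\D_R$, hence a purely imaginary constant by the maximum principle.

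Next I would seek $F$ in the form $F = f + h$ with $\Re h = 0$ on $\partial\D_R$, so that the boundary condition in \eqref{Split} holds automatically; setting $\omega := \partial_{\zbar}h$ and using $\partial_z h = \cB_R\omega$, the interior equation $F_{\zbar} = \cH(z_0,F_z) + (G)_R$ becomes the fixed point equation
\begin{equation}\label{RHfp}
\omega = \cH\bigl(z_0,\, f_z + \cB_R\omega\bigr) + (G)_R - f_{\zbar} \;=:\; \Lambda(\omega).
\end{equation}
Since $\D_R \Subset \Omega$ gives $f_z, f_{\zbar} \in L^2(\D_R)$, and since $\cH(z_0,\cdot)$ is $k$-Lipschitz with $\cH(z_0,0)=0$, the map $\Lambda$ sends $L^2(\D_R,\C)$ into itself; and by \eqref{RHisom} it is a contraction,
\[
\|\Lambda(\omega_1) - \Lambda(\omega_2)\|_{L^2(\D_R)} \le k\,\|\cB_R(\omega_1 - \omega_2)\|_{L^2(\D_R)} = k\,\|\omega_1 - \omega_2\|_{L^2(\D_R)}, \qquad k<1.
\]
Banach's theorem then gives a unique fixed point $\omega^*$, and $F := f + \cC_R\omega^*$ solves \eqref{Split}. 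For uniqueness I would note that two solutions differ by some $h$ with $\Re h = 0$ on $\partial\D_R$ and $|h_{\zbar}| \le k|h_z|$ a.e.; then \eqref{RHisom} forces $\|h_z\|_{L^2(\D_R)}^2 = \|h_{\zbar}\|_{L^2(\D_R)}^2 \le k^2\|h_z\|_{L^2(\D_R)}^2$, so $h$ is a purely imaginary constant, which one removes by any convenient normalization.

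The two norm statements then follow directly from the construction. Since $F_z - f_z = \cB_R\omega^*$ and $F_{\zbar} - f_{\zbar} = \omega^*$, the isometry \eqref{RHisom} gives $\|F_{\zbar} - f_{\zbar}\|_{L^2(\D_R)} = \|\omega^*\|_{L^2(\D_R)} = \|\cB_R\omega^*\|_{L^2(\D_R)} = \|F_z - f_z\|_{L^2(\D_R)}$ (equivalently, this is $\int_{\D_R} J_{f-F}\,dm = 0$, from $\Re(f-F) = 0$ on $\partial\D_R$). For \eqref{L2bound}, taking $L^2(\D_R)$-norms in \eqref{RHfp} at $\omega^*$, using $|\cH(z_0,\zeta)| \le k|\zeta|$ and \eqref{hqrjac} in the form $(G)_R - f_{\zbar} = -\cH(z,f_z) - (G - (G)_R)$, I get
\[
(1-k)\,\|\omega^*\|_{L^2(\D_R)} \le 2k\,\|f_z\|_{L^2(\D_R)} + \|G - (G)_R\|_{L^2(\D_R)};
\]
combined with $\|D_z F\|_{L^2(\D_R)} \le \|D_z f\|_{L^2(\D_R)} + 2\,\|\omega^*\|_{L^2(\D_R)}$ and $K = \frac{1+k}{1-k}$, this produces a bound of the form \eqref{L2bound}.

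I do not expect a genuine obstacle here: the statement is the classical solvability of the Cauchy-Riemann equation on a disk with Riemann-Hilbert data, combined with a contraction mapping. The one point deserving care is the isometry \eqref{RHisom}: it does double duty, being both the engine of the contraction (hence of existence and of uniqueness up to an imaginary constant) and precisely the asserted equality $\|F_{\zbar} - f_{\zbar}\|_{L^2(\D_R)} = \|F_z - f_z\|_{L^2(\D_R)}$. Everything else is bookkeeping of function spaces and traces (legitimate because $\D_R \Subset \Omega$) and tracking of the Lipschitz constant $k$ through \eqref{RHfp}.
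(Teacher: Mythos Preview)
Your proof is correct and follows essentially the same route as the paper: both reduce the Riemann-Hilbert problem to a Banach fixed point via the local Cauchy transform on $\D_R$ and the $L^2$-isometry of its associated Beurling-type operator (your $\cB_R$ is the paper's $\cS_{\D_R}$), and both read the norm identity $\|F_{\zbar}-f_{\zbar}\|_{L^2}=\|F_z-f_z\|_{L^2}$ directly from that isometry. Your write-up is in fact more self-contained than the paper's, which gives the explicit integral kernel for $\cS_{\D_R}$ but then defers the remaining details (uniqueness and the $L^2$ bound) to \cite[Proposition~2.4]{ACFJK}; your final estimate yields constants comparable to but not exactly equal to those in \eqref{L2bound}, which is harmless for the applications.
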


\begin{proof}
The local Cauchy transform in $\D_R:= \D(z_0, R)$ is in our case
$$
(\cC_{\D_R} \psi)(z) = \frac1{\pi} \int_{\Omega} \left(\frac{\psi(\zeta)}{z - \zeta} -\frac{(z - z_0)\,\overline{\psi(\zeta)}}{R^2 - (z - z_0)\, \overline{(\zeta - z_0)}} \right)dm(\zeta),
$$
for $\psi \in L^2(\D_R, \C)$,
and the local Beurling transform $\cS_{\D_R} \psi = \partial_z\, \cC_{\D_R} \psi$, that is,
$$
(\cS_{\D_R} \psi)(z) = -\frac{1}{\pi} \int_{\Omega} \left(\frac{\psi(\zeta)}{(z - \zeta)^2} +\frac{R^2\,\overline{\psi(\zeta)}}{(R^2 - (z - z_0)\, \overline{(\zeta - z_0)})^2} \right)dm(\zeta).
$$

The isometry of $\cS_{\D_R}$ implies that the Beltrami operator
$$
(\cB\psi)(z) = \cH(z_0,(\cS_{\D_R}\psi)(z) + f_{z}) - \cH(z,f_{z}) - G(z) - (G)_R
$$
is a contraction on $L^2(\D_R, \C)$. The rest of the proof follows as in \cite[Proposition 2.4]{ACFJK}.
\end{proof}

\subsection{Schauder estimates by freezing the coefficients, Theorem~\ref{SchauderforH}}

We will use the Morrey-Campanato integral characterization of  H\"older continuous functions
\cite[Chapter III, Theorem 1.2, p. 70, and Theorem 1.3, p. 72]{Gia}. Namely,  the integral estimate
\begin{equation}\label{morreycamp}
\|g - g_{\rho} \|_{L^2(\D(z_0, \rho))} \leq M\,\rho^{1 + \gamma}
\end{equation}
for $z_0 \in \Omega$ and every $\rho \leq \min\{R_0, \mathrm{dist}(z_0, \partial\Omega)\}$ (for some $R_0$) gives the local $\gamma$-H\"older continuity of $g$ in $\Omega$. Moreover, for $\tilde{\Omega}\Subset\Omega$, \eqref{morreycamp} implies the H\"older seminorm bound
\begin{equation}\label{semi}
[g]_{C^{\gamma}(\tilde{\Omega})} \leq c(\gamma, \tilde{\Omega})\,M
\end{equation}
and the $L^\infty$-bound
\begin{equation}\label{sup}
\| g \|_{L^\infty(\tilde{\Omega})} \leq c(\gamma, \tilde{\Omega})\left(M\,\mathrm{diam}(\Omega)^\gamma + \|g\|_{L^2(\Omega)}\right)\!,
\end{equation}
see the proofs of Proposition 1.2 and Theorem 1.2 in pages 68--72 of \cite[Chapter III]{Gia}.

Next, we apply the ideas of freezing the coefficients to get few basic estimates for solutions to \eqref{hqrjac}. We start with the following

\begin{lem} \label{basicII}
Suppose $\cH$ satisfies the conditions \eqref{Hcondition} and $G \in C^\alpha(\Omega, \C)$. Let $f \in W^{1, 2}_{\loc}(\Omega, \C)$ be a solution to 
\begin{equation*}
f_{\zbar} = \cH(z, f_{z}) + G(z) \qquad \text{a.e. \, in} \;\;  \Omega.
\end{equation*}
If $\D(z_0, R) \Subset \Omega$, then for each $0 < \rho \leq R$ we have
$$
\aligned \| D_z f - (D_zf )_\rho \|_{L^2(\D_\rho)}  &\leq  c(K) \left(\frac{\rho}{R}\right)^{1+\alpha_{K}} \|D_z f - (D_zf )_R\|_{L^2(\D_R)}\\
&\; + c(K) \,  \mathbf{H}_\alpha(\Omega)\, R^\alpha\, \| f_z \|_{L^2(\D_R)} + c(K)\,[G]_{C^\alpha(\D_R)}R^{1 + \alpha} \endaligned
$$
where $\D_r = \D(z_0, r)$.
\end{lem}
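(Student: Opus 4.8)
The plan is the standard Schauder freezing-the-coefficients scheme, comparing the solution $f$ on a disc $\D_R=\D(z_0,R)$ with the solution $F$ of the frozen autonomous problem \eqref{Split} produced by Proposition~\ref{splitting}, and then invoking the improved autonomous decay estimate of Corollary~\ref{holder1K}(2). First I would write, for $0<\rho\leq R$,
\[
\|D_z f-(D_zf)_\rho\|_{L^2(\D_\rho)}\leq \|D_z F-(D_zF)_\rho\|_{L^2(\D_\rho)}+\|D_z(f-F)-(D_z(f-F))_\rho\|_{L^2(\D_\rho)},
\]
and bound the second term by $2\|D_z(f-F)\|_{L^2(\D_\rho)}\leq 2\|D_z(f-F)\|_{L^2(\D_R)}$, since subtracting the average only decreases the $L^2$ norm. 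For the first term I would apply Corollary~\ref{holder1K}(2) to $F$ to get
\[
\|D_z F-(D_zF)_\rho\|_{L^2(\D_\rho)}\leq c(K)\Bigl(\tfrac{\rho}{R}\Bigr)^{1+\alpha_K}\|D_z F-(D_zF)_R\|_{L^2(\D_R)},
\]
and then, using the same average-subtraction remark together with $\|D_zF\|_{L^2(\D_R)}\leq 2K\|D_zf\|_{L^2(\D_R)}+\|G-(G)_R\|_{L^2(\D_R)}$ from \eqref{L2bound} and the $C^\alpha$ bound $\|G-(G)_R\|_{L^2(\D_R)}\leq c\,[G]_{C^\alpha(\D_R)}R^{1+\alpha}$, replace $\|D_zF-(D_zF)_R\|_{L^2(\D_R)}$ by $c(K)\bigl(\|D_zf-(D_zf)_R\|_{L^2(\D_R)}+[G]_{C^\alpha(\D_R)}R^{1+\alpha}\bigr)$ (again, subtracting the $f$-average on $\D_R$ from $D_zF$ is harmless because it is one particular constant). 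This yields the first and third terms on the right-hand side of the asserted inequality.

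The crux is therefore to estimate $\|D_z(f-F)\|_{L^2(\D_R)}$ by $c(K)\,\mathbf{H}_\alpha(\Omega)\,R^\alpha\|f_z\|_{L^2(\D_R)}$. Here I would use that $f-F$ solves, on $\D_R$ with $\Re(f-F)=0$ on $\partial\D_R$, the equation
\[
(f-F)_{\zbar}=\cH(z,f_z)-\cH(z_0,F_z)+G-(G)_R,
\]
rewrite the right-hand side as $\bigl(\cH(z_0,f_z)-\cH(z_0,F_z)\bigr)+\bigl(\cH(z,f_z)-\cH(z_0,f_z)\bigr)+\bigl(G-(G)_R\bigr)$, and note that the first bracket is $k$-Lipschitz-controlled by $|f_z-F_z|$, so after inverting the Beltrami-type operator via the local Beurling transform (exactly as in Proposition~\ref{splitting}, whose isometry gives $\|(f-F)_{\zbar}\|_{L^2(\D_R)}=\|(f-F)_z\|_{L^2(\D_R)}$) the contraction estimate absorbs it. What survives is
\[
\|D_z(f-F)\|_{L^2(\D_R)}\leq \frac{c}{1-k}\Bigl(\|\cH(\cdot,f_z)-\cH(z_0,f_z)\|_{L^2(\D_R)}+\|G-(G)_R\|_{L^2(\D_R)}\Bigr),
\]
and then the H\"older hypothesis \eqref{Hcondition} gives $|\cH(z,f_z)-\cH(z_0,f_z)|\leq \mathbf{H}_\alpha(\Omega)|z-z_0|^\alpha|f_z|\leq \mathbf{H}_\alpha(\Omega)R^\alpha|f_z|$ on $\D_R$, whence $\|\cH(\cdot,f_z)-\cH(z_0,f_z)\|_{L^2(\D_R)}\leq \mathbf{H}_\alpha(\Omega)R^\alpha\|f_z\|_{L^2(\D_R)}$, and the $G$-term is again $\leq c\,[G]_{C^\alpha(\D_R)}R^{1+\alpha}$. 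Combining everything and renaming constants produces exactly the claimed inequality.

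The main obstacle I anticipate is the rigorous handling of the Riemann--Hilbert comparison and the contraction argument that absorbs the $k$-Lipschitz term: one must check that the difference $f-F$ genuinely lies in the function space on which the local Cauchy/Beurling representation is valid (i.e. $f-F\in W^{1,2}(\D_R)$ with the correct boundary behaviour $\Re(f-F)=0$ on $\partial\D_R$), so that $(f-F)_z=\cS_{\D_R}\bigl((f-F)_{\zbar}\bigr)$ and the isometry of $\cS_{\D_R}$ applies; this is essentially the content of \cite[Proposition 2.4]{ACFJK} invoked in Proposition~\ref{splitting}, and I would cite it rather than redo it. A secondary, purely bookkeeping point is to keep track of how each application of ``subtract the average, the $L^2$ norm does not increase'' is used so that no constant secretly depends on $\rho/R$; all the genuine $\rho$-decay must come solely from Corollary~\ref{holder1K}(2), and the remaining terms carry only $R^\alpha$ or $R^{1+\alpha}$ factors as stated.
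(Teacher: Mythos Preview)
Your proposal is correct and follows essentially the same route as the paper: compare $f$ with the Riemann--Hilbert solution $F$ from Proposition~\ref{splitting}, use Corollary~\ref{holder1K}(2) for the $F$-oscillation, and absorb the $k$-Lipschitz piece via the isometry $\|(f-F)_{\zbar}\|_{L^2(\D_R)}=\|(f-F)_z\|_{L^2(\D_R)}$ to obtain the error bound \eqref{basicI}. One minor point: you do not need \eqref{L2bound} to pass from $\|D_zF-(D_zF)_R\|_{L^2(\D_R)}$ to $\|D_zf-(D_zf)_R\|_{L^2(\D_R)}$; the paper simply uses $\|D_zF-(D_zF)_R\|\leq\|D_zF-(D_zf)_R\|\leq\|D_zf-(D_zf)_R\|+\|D_z(f-F)\|$ and then feeds the last term back into the already-established error estimate, which is exactly what your parenthetical ``subtracting the $f$-average is harmless'' accomplishes.
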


\begin{proof} The required estimate to prove is the same as  in Corollary \ref{holder1K}, claim $(2)$, up to the correction term   
$c(K)\,  ( \mathbf{H}_\alpha(\Omega) R^\alpha \| f_z \|_{L^2(\D_R)} + [G]_{C^\alpha(\D_R)}R^{1 + \alpha})$. This will  arise from a comparison of $f$ and the solution $F$ to an inhomogeneous autonomous equation, the local Riemann-Hilbert problem
$$
\begin{cases}
F_{\zbar} = \cH(z_0, F_{z}) + (G)_R  & \text{a.e. $z \in \D_R$}, \\
\Re(f - F) = 0 & \text{on $\partial \D_R$}.
\end{cases}
$$
The existence of $F$  follows by Proposition~\ref{splitting}. Furthermore, by \eqref{Hcondition}, \\
$$
\aligned
&\|(f-F)_{\zbar} \|_{L^2(\D_R)} \\
&\qquad \leq \|\cH(z, f_z)-\cH(z_0, f_z)\|_{L^2(\D_R)} + \|\cH(z_0, f_z)-\cH(z_0,F_z)\|_{L^2(\D_R)} \\
&\qquad \quad + \|G - (G)_R\|_{L^2(\D_R)} \\
&\qquad\leq\mathbf{H}_\alpha(\Omega)\,R^\alpha\,\|f_z\|_{L^2(\D_R)} +k\, \|(f- F)_z\|_{L^2(\D_R)}  + \|G - (G)_R\|_{L^2(\D_R)} .
\endaligned
$$\\
Since the Beurling transform  $\cS_{\D_R}$ of the disk $\D_R$ is an isometry $L^2(\D_R) \to L^2(\D_R)$ and $\|G - (G)_R\|_{L^2(\D_R)} \leq 2^\alpha \sqrt{\pi}\; [G]_{C^\alpha(\D_R)}R^{1 + \alpha}$, we end up with
\begin{equation} \label{basicI}
\| D_z f - D_z F  \|_{L^2(\D_R)}  \leq \frac{1 + k}{1 - k}\,  \mathbf{H}_\alpha(\Omega)\,R^\alpha\,\| f_z\|_{L^2(\D_R)} + \frac{2^{1 + \alpha} \sqrt{\pi}}{1 - k}\, [G]_{C^\alpha(\D_R)}R^{1 + \alpha}.
\end{equation}
\vspace{-.3cm}

On the other hand, Corollary  \ref{holder1K} $(2)$ gives 
$$
\aligned
&\| D_z f - (D_zf )_\rho \|_{L^2(\D_\rho)}  \leq \| D_z F - (D_zF )_\rho \|_{L^2(\D_\rho)} + 2 \| D_z f - D_z F  \|_{L^2(\D_\rho)}\\
&\quad\leq c(K)\left(\frac\rho{R}\right)^{1 + \alpha_{K}}\,\|D_z F-(D_z F)_{R}\|_{L^2(\D_R)} + 2 \| D_z f - D_z F  \|_{L^2(\D_R)}\\
&\quad\leq c(K)\left(\frac\rho{R}\right)^{1 + \alpha_{K}}\,\|D_z f-(D_z f)_{R}\|_{L^2(\D_R)}\\
&\qquad \quad + (2\,c(K) + 2) \| D_z f - D_z F  \|_{L^2(\D_R)}, 
\endaligned
$$
$\rho \leq R.$ Combining this  with \eqref{basicI} gives the claim.
\end{proof}

If we use  claim $(1)$ of Corollary  \ref{holder1K}, instead of  claim $(2)$,  the same argument as above leads to 
\begin{lem} \label{basicIV}
Suppose $\cH$ satisfies the conditions \eqref{Hcondition} and $G \in C^\alpha(\Omega, \C)$. If $f \in W^{1, 2}_{\loc}(\Omega, \C)$ and   $\D(z_0, R)$ are as in Lemma
\ref{basicII}, then for each $0 < \rho \leq R$,
$$ \aligned \| D_z f \|_{L^2(\D_\rho)} &\leq c(K)\,  \frac{\rho}{R}\,  \| D_z f \|_{L^2(\D_R)}\\
&\qquad + c(K) \,  \mathbf{H}_\alpha(\Omega)\, R^\alpha\, \|  f_z \|_{L^2(\D_R)} + c(K)\,[G]_{C^\alpha(\D_R)}R^{1 + \alpha}.\endaligned
$$
\end{lem}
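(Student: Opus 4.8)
The plan is to repeat the proof of Lemma~\ref{basicII} almost verbatim, the only change being that the Campanato decay estimate Corollary~\ref{holder1K}~$(2)$ is replaced by the cruder $L^2$ decay estimate Corollary~\ref{holder1K}~$(1)$; accordingly the output carries the factor $\rho/R$ rather than $(\rho/R)^{1+\alpha_K}$.

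Concretely, fix $\D(z_0,R)\Subset\Omega$ and write $\D_r=\D(z_0,r)$. First I would use Proposition~\ref{splitting} to obtain the solution $F\in W^{1,2}(\D_R,\C)$ of the frozen-coefficient Riemann--Hilbert problem
\[
\begin{cases}
F_{\zbar}=\cH(z_0,F_z)+(G)_R & \text{a.e. in }\D_R,\\
\Re(f-F)=0 & \text{on }\partial\D_R,
\end{cases}
\]
and recall from the proof of Lemma~\ref{basicII} that the comparison bound \eqref{basicI} holds, namely
\[
\|D_zf-D_zF\|_{L^2(\D_R)}\le\frac{1+k}{1-k}\,\mathbf{H}_\alpha(\Omega)\,R^\alpha\|f_z\|_{L^2(\D_R)}+\frac{2^{1+\alpha}\sqrt{\pi}}{1-k}\,[G]_{C^\alpha(\D_R)}R^{1+\alpha}.
\]
This is derived exactly as there: split $\cH(z,f_z)-\cH(z_0,F_z)$ into the $z$-oscillation term, the $k$-Lipschitz term in the gradient variable, and the $G-(G)_R$ term using \eqref{Hcondition}, then use that the Beurling transform $\cS_{\D_R}$ is an isometry of $L^2(\D_R)$ together with the elementary bound $\|G-(G)_R\|_{L^2(\D_R)}\le 2^\alpha\sqrt{\pi}\,[G]_{C^\alpha(\D_R)}R^{1+\alpha}$, and absorb the $k$-Lipschitz term to the left.

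It remains to bound $\|D_zf\|_{L^2(\D_\rho)}$ for $0<\rho\le R$. I would write $\|D_zf\|_{L^2(\D_\rho)}\le\|D_zF\|_{L^2(\D_\rho)}+\|D_zf-D_zF\|_{L^2(\D_\rho)}$, apply Corollary~\ref{holder1K}~$(1)$ to the autonomous solution $F$ to get $\|D_zF\|_{L^2(\D_\rho)}\le c(K)(\rho/R)\|D_zF\|_{L^2(\D_R)}$, and then use $\|D_zF\|_{L^2(\D_R)}\le\|D_zf\|_{L^2(\D_R)}+\|D_zf-D_zF\|_{L^2(\D_R)}$ together with monotonicity of the $L^2$ norm in the radius and $\rho/R\le1$. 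Inserting \eqref{basicI} and collecting the constants into a single $c(K)$ yields the claimed inequality. There is no genuine obstacle here; the one point to keep in mind is that the two error terms produced by the non-autonomy of $\cH$ and by the inhomogeneity $G$ are exactly of the form $R^\alpha\|f_z\|_{L^2(\D_R)}$ and $R^{1+\alpha}[G]_{C^\alpha(\D_R)}$, so no estimate beyond those already used for Lemma~\ref{basicII} is required.
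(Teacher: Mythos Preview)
Your proposal is correct and matches the paper's own argument exactly: the paper simply says that replacing Corollary~\ref{holder1K}~$(2)$ by Corollary~\ref{holder1K}~$(1)$ in the proof of Lemma~\ref{basicII} yields Lemma~\ref{basicIV}, and you have spelled out precisely those details.
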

\smallskip

If the data $G$ is not zero, $W^{1,2}_{\loc}$-solutions  to \eqref{hqrjac} are not a priori $K$-quasiregular and we don't have the Caccioppoli estimates immediately at our use. The H\"older continuity of the coefficients let us, though, derive Caccioppoli type estimates. These are convenient to present in the following form.

\begin{lem} \label{cacciopp2}
 Suppose $\cH$, $G \in C^\alpha(\Omega, \C)$ and $f \in W^{1, 2}_{\loc}(\Omega, \C)$   are as in Lemma \ref{basicII}. Let $\D(z_0, R) \subset \Omega'' \Subset \Omega' \Subset \Omega$. If $f\in C^\beta (\Omega', \C)$ for some $0 < \beta \leq 1$, then
$$ \| D_z f \|_{L^2(\D(z_0, R))} \leq c(K, \Omega', \Omega'', \mathbf{H}_\alpha(\Omega)) \left( [f]_{C^\beta(\Omega')} \, R^\beta + \|G\|_{C^{\alpha}(\Omega')}\,R\right)\!.$$
\end{lem}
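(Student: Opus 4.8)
The plan is to prove Lemma~\ref{cacciopp2} via a Caccioppoli-type argument: we test the equation against a cutoff times the solution, but in the inhomogeneous case we cannot work directly with the equation $f_{\zbar}=\cH(z,f_z)+G$ since the right-hand side does not vanish. Instead I would freeze the coefficient at the center $z_0$ and compare with the autonomous homogeneous equation. First I would fix a smooth cutoff $\eta$ supported in $\D(z_0,2R)$ with $\eta\equiv 1$ on $\D(z_0,R)$ and $|\nabla\eta|\lesssim 1/R$, and then estimate $\|D_zf\|_{L^2(\D_R)}$ from above. The key observation is that $f_{\zbar}-f_z$ can be written, using \eqref{Hcondition}, as $\cH(z_0,f_z)$ plus an error term of order $\mathbf{H}_\alpha(\Omega)R^\alpha|f_z|+|G|$ on $\D(z_0,2R)$, and $\cH(z_0,\cdot)$ is $k$-Lipschitz with $\cH(z_0,0)=0$, so $|\cH(z_0,f_z)|\le k|f_z|$.

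The central step is the following. Write $g=f-(f)_{\D_{2R}}$, where $(f)_{\D_{2R}}$ is the average of $f$; then $\|g\|_{L^\infty(\D_{2R})}\le c\,[f]_{C^\beta(\Omega')}R^\beta$ because $f\in C^\beta(\Omega')$, and moreover $D_zg=D_zf$ since the subtracted quantity is a constant. Now I would test: multiply $\eta^2\bar g$ against a suitable combination and integrate by parts. More precisely, from $f_{\zbar}=\cH(z,f_z)+G$ one gets $|f_{\zbar}|\le k|f_z|+\mathbf{H}_\alpha(\Omega)R^\alpha|f_z|+|G|$ on $\D_{2R}$, so on $\D_{2R}$ the map $f$ satisfies a distortion inequality with constant $k'=k+\mathbf{H}_\alpha(\Omega)(2R)^\alpha<1$ (shrinking $R$ if necessary, which is allowed since $R\le\mathrm{dist}$-type quantities bounded by $\Omega'',\Omega'$) \emph{plus} an additive $L^2$-error $\|G\|_{L^\infty(\D_{2R})}$. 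One then runs the standard Caccioppoli scheme: expand $\int|D(\eta g)|^2$, use $\partial_z(\eta g)=\eta g_z+\eta_z g$ and $\partial_{\zbar}(\eta g)=\eta g_{\zbar}+\eta_{\zbar}g$, apply the pointwise distortion-plus-error bound for $g_{\zbar}=f_{\zbar}$, and absorb the $k'^2\int\eta^2|g_z|^2$ term into the left-hand side. The surviving terms are $\int|\eta_z|^2|g|^2\lesssim R^{-2}\|g\|_{L^\infty(\D_{2R})}^2\,|\D_{2R}|\lesssim [f]_{C^\beta(\Omega')}^2R^{2\beta}$ and the $G$-term $\int\eta^2|G|^2\lesssim \|G\|_{L^\infty(\D_{2R})}^2R^2\lesssim\|G\|_{C^\alpha(\Omega')}^2R^2$, together with a cross term handled by Young's inequality. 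Taking square roots yields $\|D_zf\|_{L^2(\D_R)}\le c([f]_{C^\beta(\Omega')}R^\beta+\|G\|_{C^\alpha(\Omega')}R)$ with $c$ depending on $K,\Omega',\Omega'',\mathbf{H}_\alpha(\Omega)$, as claimed.

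The main obstacle is ensuring the absorption argument is legitimate, i.e. that the effective distortion constant $k'=k+\mathbf{H}_\alpha(\Omega)(2R)^\alpha$ stays strictly below $1$: this is why the constant in the statement is allowed to depend on $\Omega',\Omega''$, since the admissible radii $R$ are controlled by $\mathrm{dist}(\Omega'',\partial\Omega')$ and we may assume $R$ is small enough that $\mathbf{H}_\alpha(\Omega)(2R)^\alpha\le(1-k)/2$; for larger $R$ (up to $\mathrm{dist}(\Omega'',\partial\Omega')$) one covers $\D(z_0,R)$ by finitely many small discs, the number depending only on the geometry, and sums. A secondary technical point is justifying the integration by parts: $f\in W^{1,2}_{\loc}$ only a priori, but the Caccioppoli test function $\eta^2\bar g$ is an admissible test function since $\eta$ is compactly supported in $\Omega$ and $g\in W^{1,2}(\D_{2R})\cap L^\infty(\D_{2R})$, so all products lie in $L^2$ and the computation with the Cauchy--Riemann operators $\partial_z,\partial_{\zbar}$ and the identity $\int_{\D_{2R}}\partial_{\zbar}(\eta g)\,\overline{\partial_z(\eta g)}=0$ (or rather its real-part version) is valid. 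Nothing here requires $f$ to be quasiregular globally, which is exactly the point of the lemma.
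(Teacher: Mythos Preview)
Your approach is correct and in fact considerably more elementary than the paper's. The paper does not argue directly: it invokes Theorem~\ref{vmolocalestimate} from the appendix, a general $L^q$ Caccioppoli estimate for structure fields with VMO dependence on $z$, whose proof runs through a local Riemann--Hilbert comparison, the Fefferman--Stein sharp maximal function, $L^q$-boundedness of the Beurling transform, and a hole-filling iteration. That machinery is needed for the $L^q$ statement with $q>2$, but for the $L^2$ bound of Lemma~\ref{cacciopp2} your direct cutoff-and-absorb argument via $\int J_{\eta g}=0$ suffices.

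One simplification you should make: the freezing step is unnecessary. From $\cH(z,0)\equiv 0$ and the $k$-Lipschitz condition in $\zeta$ alone you already have the pointwise bound $|f_{\zbar}|=|\cH(z,f_z)+G|\le k|f_z|+|G|$ on all of $\Omega$, with no $\mathbf{H}_\alpha(\Omega)R^\alpha$ correction. So your ``main obstacle'' disappears: the absorption constant is exactly $k<1$, independent of $R$, and the dependence of the final constant on $\mathbf{H}_\alpha(\Omega)$ is actually spurious. The only reason you still need a covering argument for large $R$ is geometric, to ensure $\D(z_0,2R)\subset\Omega'$ so that $\|f-(f)_{2R}\|_{L^\infty(\D_{2R})}\le c\,[f]_{C^\beta(\Omega')}R^\beta$ is available; this is where the dependence on $\Omega',\Omega''$ enters. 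Also, the identity you cite as $\int\partial_{\zbar}(\eta g)\,\overline{\partial_z(\eta g)}=0$ is not quite the right one; what you actually use (and what is true for compactly supported $W^{1,2}$ maps) is $\int J_{\eta g}=\int\bigl(|(\eta g)_z|^2-|(\eta g)_{\zbar}|^2\bigr)=0$.
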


\begin{proof} By Theorem \ref{vmolocalestimate} in the appendix, we have the following Caccioppoli type estimate
$$
\aligned
\| D_z f \|_{L^2(\D(z_0, R))} & \leq c_1\left(\frac{1}{\mathrm{dist}(\partial\Omega'', \partial\Omega')R}\,\|f - (f)_R \|_{L^2(\Omega')} + \|G\|_{L^2(\Omega')}\right)\\
& \leq c_2\left( [f]_{C^\beta(\Omega')} \, R^\beta +  \|G\|_{L^\infty(\Omega')} \, R\right).
\endaligned
$$
where the constants $c_1$ and $c_2$ depend on $K$, $\Omega'$, $\Omega''$ and $\mathbf{H}_\alpha(\Omega)$.
\end{proof}

Lastly, let us  recall
\begin{lem}[Lemma 2.1, p. 86, in  {\cite[Chapter III]{Gia}}]\label{growth}
Let $\Psi$ be non-negative, non-decreasing function such that
$$
\Psi(\rho) \leq a\left[\left(\frac{\rho}{R}\right)^{\lambda} + \sigma\right]\Psi(R) + bR^{\gamma}
$$
for every $0 < \rho \leq R \leq R_0$, where $a$ is non-negative constant and $0 < \gamma < \lambda$. Then there exists $\sigma_0 = \sigma_0(a, \lambda, \gamma)$ such that, if $\sigma < \sigma_0$,
$$
\Psi(\rho) \leq c(a, \lambda, \gamma)\left[\left(\frac{\rho}{R}\right)^{\gamma}\Psi(R) + b\rho^{\gamma}\right]
$$
for all $0 < \rho \leq R \leq R_0$.
\end{lem}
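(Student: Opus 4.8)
Since the statement is quoted as Lemma~2.1 of \cite[Chapter III]{Gia}, in principle one need only cite it; for completeness I sketch the standard proof. The plan is a hole-filling / iteration argument whose crucial device is an auxiliary exponent chosen strictly between $\gamma$ and $\lambda$, which is what makes a geometric series converge at the end.

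First I would fix $\mu$ with $\gamma < \mu < \lambda$, and then, using that $\tau^{\lambda - \mu} \to 0$ as $\tau \to 0^+$, choose a ratio $\tau \in (0,1)$ so small that $2a\tau^{\lambda} \leq \tau^{\mu}$. Next I would set $\sigma_0 := \tau^{\lambda}$ (which depends only on $a,\lambda,\gamma$, as required), so that whenever $\sigma < \sigma_0$ the hypothesis applied with $\rho = \tau R$ yields
$$\Psi(\tau R) \leq a(\tau^{\lambda} + \sigma)\Psi(R) + bR^{\gamma} \leq 2a\tau^{\lambda}\Psi(R) + bR^{\gamma} \leq \tau^{\mu}\Psi(R) + bR^{\gamma}.$$

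Second, I would iterate this inequality. Replacing $R$ by $\tau^{j}R$ and writing $a_j = \Psi(\tau^{j}R)$, the estimate reads $a_{j+1} \leq \tau^{\mu}a_j + b\tau^{j\gamma}R^{\gamma}$, which unrolls to
$$a_k \leq \tau^{k\mu}\Psi(R) + bR^{\gamma}\tau^{(k-1)\gamma}\sum_{i=0}^{k-1}\tau^{i(\mu-\gamma)} \leq \tau^{k\mu}\Psi(R) + \frac{\tau^{-\gamma}}{1 - \tau^{\mu - \gamma}}\,bR^{\gamma}\tau^{k\gamma}.$$
The geometric series converges precisely because $\mu > \gamma$; and since $\mu > \gamma$ and $\tau < 1$ also give $\tau^{k\mu} \leq \tau^{k\gamma}$, we obtain $\Psi(\tau^{k}R) \leq c_1\,\tau^{k\gamma}\bigl(\Psi(R) + bR^{\gamma}\bigr)$ with $c_1 = c_1(a,\lambda,\gamma)$.

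Third, I would pass from the geometric sequence of radii to an arbitrary $\rho \in (0,R]$: pick $k \geq 0$ with $\tau^{k+1}R < \rho \leq \tau^{k}R$, use monotonicity of $\Psi$ together with $\tau^{k\gamma} \leq \tau^{-\gamma}(\rho/R)^{\gamma}$ to get
$$\Psi(\rho) \leq \Psi(\tau^{k}R) \leq c_1\tau^{-\gamma}\Bigl(\tfrac{\rho}{R}\Bigr)^{\gamma}\bigl(\Psi(R) + bR^{\gamma}\bigr) = c(a,\lambda,\gamma)\left[\Bigl(\tfrac{\rho}{R}\Bigr)^{\gamma}\Psi(R) + b\rho^{\gamma}\right],$$
which is the assertion. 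There is no real obstacle here; the only points needing care are the order of the choices (first $\mu$, then $\tau$, then $\sigma_0$), the convergence of the geometric series in the iteration (the reason the auxiliary exponent $\mu$ with $\gamma < \mu < \lambda$ is introduced), and checking that every constant ultimately depends only on $a$, $\lambda$, and $\gamma$.
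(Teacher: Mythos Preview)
Your argument is correct and is precisely the standard iteration proof of this lemma as given in Giaquinta's book; the paper itself does not supply a proof but merely cites \cite[Chapter III, Lemma~2.1]{Gia}, so there is nothing further to compare.
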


\medskip

With these tools and estimates at our disposal we are ready for   the Schauder estimates.

\begin{proof}[Proof of Theorems~\ref{SchauderforH} and \ref{schauder}]  Denote $\D_r = \D(z_0, r)$. As we are dealing with local estimates we can assume that $\Omega$ is bounded.
\medskip

\noindent{\em Step 1. H\"older continuity of $f$.}\quad We will show that $f$ is actually locally $\beta$-H\"older continuous for every $0 < \beta < 1$. 

Namely, according to Lemma \ref{basicIV} we have
\begin{equation*} \label{guiqintaI} 
\| D_z f \|_{L^2(\D_\rho)} \leq c_0(K) \left( \frac{\rho}{R} \, +  \,  \mathbf{H}_\alpha(\Omega) R^\alpha \right)  \| D_z f \|_{L^2(\D_R)} + c_1(K)\,[G]_{C^\alpha(\D_R)}R^{1 + \alpha},
\end{equation*}
whenever $0 < \rho \leq R$ and $\D_R = \D(z_0, R) \subset \Omega$.  Applying Lemma \ref{growth} to $\Psi(\rho) = \| D_z f \|_{L^2(\D_\rho)}$, with $b= c_1(K)\,[G]_{C^\alpha(\D_R)}, \lambda = 1$ and $\sigma= \mathbf{H}_\alpha(\Omega) R^\alpha$, we see that 
$$\| D_z f \|_{L^2(\D_\rho)} \leq c_2(K, \epsilon)  \left(\left( \frac{\rho}{R} \right)^{1-\epsilon}  \| D_z f \|_{L^2(\D_{R})} + [G]_{C^\alpha(\D_R)}\,\rho^{1 - \epsilon} \right), 
$$
where $0 < \rho \leq R \leq \min \{R_0, \dist(z_0, \partial \Omega)\}$. 
Here $R_0 \leq 1$ is small enough; how small $R_0$ needs to be taken depends  on $c_0(K),  \mathbf{H}_\alpha(\Omega)$ and $\epsilon >0$ but not on $z_0$. Thus  the same upper bound  $R_0$ works throughout the bounded domain $\Omega$. 

Combining with the Poincar\'e inequality gives
$$\aligned \|f-f_\rho\|_{L^2(\D_\rho)}  &\leq  \rho\, \| D_z f \|_{L^2(\D_\rho)}\\
&\leq c_2(K, \epsilon)\,\rho^{2-\epsilon}\left(R^{\epsilon-1}\, \| D_z f \|_{L^2(\D_R)} + [G]_{C^\alpha(\D_R)} \right),
\endaligned$$
for $0 < \rho \leq R \leq \min \{R_0, \dist(z_0, \partial \Omega)\}$. 

Let $\D(\omega, 4R) \subset \Omega$. Now, for $\D(z_0, \rho) \subset \D(\omega, 2R)$,
$$\aligned &\|f-f_\rho\|_{L^2(\D_\rho)} \\ &\quad  \leq c_2(K, \epsilon)\, \rho^{2-\epsilon} \left(\min\{R_0, R \}^{\epsilon - 1} \| D_z f \|_{L^2(\D(\omega, 3R))}+ [G]_{C^\alpha(\D(\omega, 3R))} \right).
\endaligned
$$
In view of \eqref{morreycamp} we see that $f \in C^\beta_{\loc}(\D(\omega, 2R),\C)$ for every $0 < \beta = 1 - \epsilon < 1$.  The estimate \eqref{semi} gives a bound for the local H\"older norm,
\begin{equation}\label{fholdernorm}
 [f]_{C^\beta(\D(\omega, R))} \leq c_3(K, \beta, R, \mathbf{H}_\alpha(\Omega)) \left( \| D_z f \|_{L^2(\D(\omega, 3R))}  +  [G]_{C^\alpha(\D(\omega, 3R))} \right).
 \end{equation}

\medskip

\noindent{\em Step 2: Self-improving Morrey-Campanato estimate.}\quad 
Claim: Assume that $1<\alpha+\beta<1+\alpha_{K}$. Then  $D_z f\in C^{\alpha+\beta-1}_{\loc}(\Omega, \C)$. 

\smallskip

Let $\Omega'' \Subset \Omega' \Subset \Omega$. We first show the claim for $\beta < 1$, and start with estimates from Lemma \ref{basicII}, 
$$
\aligned &\| D_z f - (D_zf )_\rho \|_{L^2(\D_\rho)}  \leq  c_0(K) \left(\frac{\rho}{R}\right)^{1+\alpha_{K}} \|D_z f - (D_zf )_R\|_{L^2(\D_R)}\\
&\qquad + c_0(K) \,  \mathbf{H}_\alpha(\Omega)\, R^\alpha\, \|  f_z \|_{L^2(\D_R)} + c_0(K)\,[G]_{C^\alpha(\Omega')}R^{1 + \alpha} \endaligned
$$
when $ \D(z_0, R) \subset \Omega''$. Here, by the Caccioppoli estimate of Lemma \ref{cacciopp2} 
\begin{equation}
\label{boundedness}
 \| \partial_z f \|_{L^2(\D_R)} \leq c_1(K, \Omega', \Omega'', \mathbf{H}_\alpha(\Omega)) \left( [f]_{C^\beta(\Omega')} \, R^\beta + \|G\|_{C^{\alpha}(\Omega')}\,R\right)\!, 
\end{equation} 
which by Step 1 is finite for every $\beta < 1$.

We will now apply Lemma \ref{growth} to the non-decreasing function $\Psi(\rho) = \|D_z f-(D_z f)_\rho\|_{L^2(\D_\rho)} = \inf_{a\in \C} \| D_z f - a \|_{L^2(\D_\rho)}$ and the parameters  $\lambda = 1+ \alpha_{K}$, $\sigma = 0$ and $b = c_2([f]_{C^\beta(\Omega')} + \|G\|_{C^{\alpha}(\Omega')})$, where $c_2(K, \Omega', \Omega'', \mathbf{H}_\alpha(\Omega))$. We obtain that
\begin{equation}\label{gammanorm} 
\aligned
\| D_z f - (D_zf )_\rho \|_{L^2(\D_\rho)}  &\leq  c_3 \left(\frac{\rho}{R}\right)^{\alpha+\beta}  \|D_z f - (D_zf )_R\|_{L^2(\D_R)} \\
&\quad + c_3 \, \rho^{\alpha+\beta} \left(  \,[f]_{C^\beta(\Omega')} + \|G\|_{C^{\alpha}(\Omega')} \right)
\endaligned
\end{equation}
whenever $\rho\leq R$. 

In terms of the Morrey-Campanato estimate \eqref{morreycamp} in the set $\Omega''$, we see that 
$D_z f\in C^{\alpha+\beta-1}_{\loc}(\Omega'', \C)$, which is enough for our claim if $\alpha \geq \alpha_{K}$. The norm estimate \eqref{thmnorm} follows  
from combining \eqref{semi} with  \eqref{fholdernorm} and \eqref{gammanorm}.

In case $\alpha < \alpha_{K}$ we need to continue to show that $f \in C^{1, \alpha}_{\loc}(\Omega, \C)$. But  what we have  above proves that 
 $D_z f$ is locally bounded. Thus the bound  in \eqref{boundedness} remains finite for $\beta = 1$, and we can repeat the proof of \eqref{gammanorm} with $\beta = 1$. Accordingly, \eqref{morreycamp} and \eqref{semi}  give $f \in C^{1, \alpha}_{\loc}(\Omega, \C)$, with norm bound

 $$
 \aligned
{[D_zf]}_{C^\alpha(\D(\omega, R))}
&\leq c(K, \alpha, \omega, R, \mathbf{H}_\alpha(\Omega))\biggl[\|D_z f\|_{L^2(\D(\omega, 2R))}\\
&\qquad    + \|D_z f\|_{L^\infty(\D(\omega, 2R))} + \|G\|_{C^{\alpha}(\Omega')}\biggr].\endaligned
$$

To estimate the $L^\infty$-norm in $\D(\omega, 2R)$, we note that for $\D(z_0, \rho) \subset \D(\omega, \frac{5R}{2})$ \eqref{gammanorm} holds with $\Omega' = \D(\omega, 3R)$ and thus once more by Morrey-Campanato norm estimate \eqref{morreycamp} (with \eqref{sup})
$$
\aligned
\|D_z f\|_{L^\infty(\D(\omega, 2R))} &\leq c(K, \alpha, \omega, R, \mathbf{H}_\alpha(\Omega))\biggl[\|D_z f \|_{L^2(\D(\omega, 3R))}\\
&\qquad +  [f]_{C^{\beta'}(\D(\omega, 3R))} + \|G\|_{C^{\alpha}(\D(\omega, 3R))}\biggr],
\endaligned
$$
where $\beta'< 1$. It remains to combine with  \eqref{fholdernorm} to obtain
$$
{\|D_zf\|}_{C^\gamma(\D(\omega, R))}
\leq c(K, \alpha, \gamma, \omega, R, \mathbf{H}_\alpha(\Omega))\,\|D_z f\|_{L^2(\D(\omega, 9R))} + \|G\|_{C^{\alpha}(\D(\omega, 9R))},
$$
and we have  the norm bound  \eqref{thmnorm} by rescaling.
\end{proof}

\section{Schauder regularity for the Leray-Lions equation}

In this section we prove Theorem \ref{SchauderforA}.
\begin{proof}[Proof of Theorem \ref{SchauderforA}] Let $u$ be a solution of the Leray-Lions equation \eqref{diverEq1}. We apply Theorem \ref{equivalence} and Proposition \ref{Holderperiytyy} to find that $f = u + iv$ is a solution of the equation \eqref{HauxEquation}, which we recall to be
\[
f_{\bz} = \cH^*(z,\bar{f_z} + g(z)) + g(z).
\]
Let now $\D(z_0,2r) \subset \Omega$. We aim to show that $f$ lies in the regularity class $C^{1,\gamma_{\alpha,K}}(\D(z_0,r))$ where $\gamma_{\alpha,K}$ is the exponent from Theorem \ref{SchauderforH}. Let hence $\psi$ be a smooth function equal to $1$ in $\D(z_0,r)$ and $0$ outside $\D(z_0,2r)$. With $\mathcal{C}$ denoting the Cauchy transform in the plane, we define
\[f^*(z) = f(z) + \bar{\mathcal{C}g(z)}.\]
By definition, $f^*_{z} = f_z + \bar{g}$. Hence we see that $f^*$ is a solution of the nonlinear Beltrami equation
\[f^*_{\bz} = \cH^*(z,\bar{f^*_z}) + g + \bar{\mathcal{S} g},\]
where $\mathcal{S}$ denotes the Beurling transform. Now by Proposition \ref{Holderperiytyy} the field $\cH^*$ satisfies the assumptions of Theorem \ref{SchauderforH}, and the inhomogeneous term $g + \bar{\mathcal{S} g}$ lies in $C^{\alpha}(\D(z_0,r))$ since the Beurling transform maps $C^{\alpha}(\cc)$ to itself. Thus Theorem \ref{SchauderforH} implies that $f^*$ lies in the regularity class $C^{1,\gamma}$, which shows that the same holds for $f$ and $u = \Re(f)$ as well. This concludes the proof.
\end{proof}

\appendix

\section{The Caccioppli estimate}

In this appendix, 
our assumption is that the structure function $\cH$ is \emph{locally uniformly in $\mathrm{VMO}$}, that is, if $\mathcal{K}\subset\Omega$ is compact then
\begin{equation}\label{locunifvmo}
\lim_{R\to 0}\sup_{a\in \mathcal{K}}\sup_{0<r<R}\fint_{B(a,r)} |V(z, B(a,r))|\,dm(z)=0
\end{equation}
where we denote, for any disc $B\subset\Omega$,
$$V(z,B)=\sup_{w\neq 0}\frac{|\cH(z,w)-\fint_B\cH(\zeta,w)\,dm(\zeta)|}{|w|}.$$

In particular, this assumption is satisfied for fields $\cH$ satisfying the H\"older continuity condition \eqref{Hcondition}. We have chosen to formulate a more general result by considering the weaker $\mathrm{VMO}$-condition for the possibility of having broader applications in the future.

\begin{thm}\label{vmolocalestimate}
Suppose that $q\in (2,\infty)$ and that data $g\in L^q_{\mathrm{loc}}(\Omega, \C)$ is given. Let $x_0\in \Omega$ be fixed.  If $f\in W^{1,2}_{\mathrm{loc}}(\Omega, \C)$ is such that
$$f_{\zbar}= \cH(z,f_{z}) + g \qquad \text{a.e. $z\in \Omega$},$$
then there exist positive numbers $d_0$ and $C = C(K, q)$ such that the estimate
$$\left(\int_{B_r} |Df|^q\right)^\frac1q\leq \frac{C(K, q)}{r}\left(\int_{2 B_r}|f|^q\right)^\frac1q + C(K, q)\left(\int_{2 B_r}|g|^q\right)^\frac1q$$
holds for each disc $B_r=B(x_0,r)$ of radius $r< d_0$. Here $d_0$ depends on $x_0$, $K$, $q$ and  the $\mathrm{VMO}$ modulus of continuity of $\cH$.
\end{thm}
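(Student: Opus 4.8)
The target is a Caccioppoli-type $L^q$ inequality for solutions of the inhomogeneous nonlinear Beltrami equation under the assumption that $\cH$ is locally uniformly $\mathrm{VMO}$ in the first variable. The natural strategy is a freezing-the-coefficients argument combined with the Iwaniec-type self-improving higher integrability for the Beurling transform, run at a scale small enough that the $\mathrm{VMO}$ modulus makes the frozen operator a genuine contraction on the relevant $L^q$ space. First I would fix the disc $B_r = B(x_0,r)$, choose a cutoff $\varphi \in \smc(2B_r)$ with $\varphi \equiv 1$ on $B_r$, $|\nabla\varphi| \lesssim 1/r$, and set $\phi = \varphi f$. Then $\phi$ has compact support, so $\phi_z = \cS(\phi_{\zbar})$ where $\cS$ is the (global) Beurling transform, and $\phi_{\zbar} = \varphi f_{\zbar} + f\varphi_{\zbar} = \varphi\,\cH(z,f_z) + \varphi g + f\varphi_{\zbar}$. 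Writing $f_z = \phi_z - f\varphi_z$ on $2B_r$, the equation for $\phi_{\zbar}$ becomes a fixed-point equation of the form $\phi_{\zbar} = \cH(z, \cS(\phi_{\zbar}) - f\varphi_z)\varphi + (\text{lower order data involving } g, f\nabla\varphi)$.

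**Main steps.** The next step is to subtract the frozen operator: write $\cH(z,w) = \cH_B(w) + (\cH(z,w) - \cH_B(w))$ where $\cH_B(w) = \fint_{2B_r}\cH(\zeta,w)\,dm(\zeta)$, so that $\cH_B$ is $k$-Lipschitz with $\cH_B(0)=0$, and the error $|\cH(z,w) - \cH_B(w)| \leq V(z,2B_r)\,|w|$ has small average by \eqref{locunifvmo} once $r < d_0$. On the $L^2$ level the map $w \mapsto \cH_B(\cS w)$ is a $k$-contraction by isometry of $\cS$; the point is to push this to $L^q$ for some $q > 2$. Here I would invoke the sharp $L^q$-bound $\|\cS\|_{L^q \to L^q} \to 1$ as $q \to 2$ (Iwaniec's theorem / the argument in \cite[Chapter 14]{AIM}), so that $k\|\cS\|_{L^q\to L^q} < 1$ for $q$ close enough to $2$; the genuinely nonlinear part of the contraction estimate is then handled exactly as in the proof of Proposition \ref{splitting}. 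The $\mathrm{VMO}$ error term is controlled not by smallness of its $L^\infty$ norm (which we do not have) but by the standard trick of splitting $V(z,2B_r) = V\chi_{\{V > \lambda\}} + V\chi_{\{V \leq \lambda\}}$: the second piece contributes $\lambda\|\cdot\|_{L^q}$, absorbed for $\lambda$ small, while the first is a multiplier with small $L^{s}$-norm for the appropriate $s$ (by the defining limit), handled via H\"older and the higher integrability gained from the previous step — this is the self-improvement loop, iterated finitely many times to reach the fixed $q \in (2,\infty)$ in the statement (a reverse-H\"older/Gehring bootstrap as in \cite[Chapter 14]{AIM}). Once the fixed-point equation is solvable in $L^q(2B_r)$ with a uniform bound, one reads off
$$\|\phi_{\zbar}\|_{L^q(2B_r)} \leq C\big(\|f\nabla\varphi\|_{L^q(2B_r)} + \|g\|_{L^q(2B_r)}\big) \leq C\big(r^{-1}\|f\|_{L^q(2B_r)} + \|g\|_{L^q(2B_r)}\big),$$
and then $\|\phi_z\|_{L^q} = \|\cS\phi_{\zbar}\|_{L^q} \leq C\|\phi_{\zbar}\|_{L^q}$, so $\|Df\|_{L^q(B_r)} \leq \|D\phi\|_{L^q(2B_r)}$ gives the claimed estimate.

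**Main obstacle.** The delicate point is not the algebra but ensuring the smallness: one needs that for $r < d_0(x_0, K, q, \text{VMO modulus})$ the combined operator norm — the $k$-Lipschitz constant of $\cH_B$ times $\|\cS\|_{L^q\to L^q}$ plus the contribution of the $\mathrm{VMO}$ error multiplier — stays strictly below $1$ \emph{on the same $L^q$ space that the conclusion is stated for}. Because $\|\cS\|_{L^q\to L^q} > 1$ strictly for every $q \neq 2$, one cannot simply work at the target exponent $q$; the resolution is the finite Gehring-type bootstrap: solve first at an exponent $q_1 > 2$ very close to $2$ where $k\|\cS\|_{L^{q_1}\to L^{q_1}} < 1$, obtain a reverse-H\"older inequality for $Df$, and let self-improvement carry the integrability up past the prescribed $q$ in finitely many steps, with $d_0$ shrinking at each step but remaining positive. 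Keeping track that all constants depend only on $K$, $q$, and (through $d_0$) the $\mathrm{VMO}$ modulus at $x_0$ — and in particular not on $f$ — is the bookkeeping that requires care, but it is routine given the machinery of \cite[Chapters 13--14]{AIM}.
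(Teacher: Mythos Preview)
Your proposal contains a genuine gap at exactly the point you flag as the ``main obstacle.'' The contraction $w \mapsto \cH_B(\cS w)$ has Lipschitz constant $k\|\cS\|_{L^q\to L^q}$, and since $\|\cS\|_{L^q\to L^q}>1$ for every $q\neq 2$, this is a contraction only in the range $2<q<\frac{2K}{K-1}$. Your proposed remedy, a Gehring-type bootstrap, does not close: each application of the reverse-H\"older/Gehring lemma improves the integrability exponent by an amount controlled by the reverse-H\"older constant, and that constant is ultimately governed by the same quantity $k\|\cS\|_{L^q\to L^q}$. There is no mechanism in your sketch that makes this constant improve from step to step, so the iteration stalls near the Astala exponent and cannot reach an arbitrary prescribed $q\in(2,\infty)$. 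The VMO smallness of $V(\cdot,B)$ helps you absorb the \emph{error} term $(\cH-\cH_B)$, but it does nothing to the principal part $\cH_B\circ\cS$, whose operator norm on $L^q$ is bounded below by $k\|\cS\|_{L^q\to L^q}$ independently of the scale.

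The paper's proof avoids this obstruction by a different mechanism. Instead of a Beurling-transform fixed point, it compares the localized solution $h=\eta f$ on each small ball $B_{a,R}$ with the solution $F$ of the autonomous Riemann--Hilbert problem $F_{\zbar}=\cH_{B_{a,R}}(F_z)$, $\Re F=\Re h$ on $\partial B_{a,R}$. The crucial input is the $C^{1,1/K}$ Schauder estimate for $F$ (Corollary~\ref{holder1K}), which yields the oscillation decay $\|D F-(D F)_{\delta R}\|_{L^2(B_{a,\delta R})}\le C(K)\,\delta^{1+1/K}\|Dh\|_{L^2(B_{a,R})}$. Combined with the VMO smallness of $V$, this controls the local Fefferman--Stein sharp maximal function $\cM^\sharp_{2,\delta R_0}(h_{\zbar})$ by $\epsilon\,\cM_2(Dh)+C\,\cM_2(G)$ with $\epsilon$ as small as one likes. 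The Fefferman--Stein inequality then gives $\|h_{\zbar}\|_{L^q}\le C\|G\|_{L^q}$ for \emph{every} $q<\infty$ in one stroke; the parameter $\delta$ (and hence $d_0$) absorbs all $q$-dependence. In short, the role played in your scheme by $\|\cS\|_{L^q\to L^q}$ is taken over by the Schauder decay exponent $\delta^{1/K}$, which can be made small regardless of $q$.

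A secondary point: your ``lower order data involving $g$, $f\nabla\varphi$'' is incomplete. Because $\cH$ is genuinely nonlinear, $\varphi\,\cH(z,f_z)-\cH(z,\varphi f_z)$ is not small; it produces a term bounded by $2k\,|1-\varphi|\,|f_z|\,\chi_{\lambda B_r}$, supported on the annulus $\lambda B_r\setminus B_r$ and of the same order as $|Df|$. This forces a hole-filling step (add $C\|Df\|_{L^q(B_r)}$ to both sides and iterate in $\lambda$), which the paper carries out explicitly. This is fixable in your framework too, but only once the main contraction issue above is resolved.
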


\begin{proof}
Throughout the proof we consider numbers $\lambda$, $k_0$, $\delta$ and $d_0$. The numbers $k_0 > 1$, $\delta > 0$ and $d_0 > 0$ are fixed but we will determine their values later. We will consider $\lambda$ as a free parameter in the interval $(1, 2)$.

Our first restriction is
\begin{equation}\label{incl}
\frac{\delta+1}{\delta}\,k_0\,\lambda\,d_0<d(x_0,\partial\Omega).
\end{equation}
Once $\delta$ and $k_0 $ are chosen the above restriction can always be satisfied by reducing $d_0$. Indeed,  since $\lambda <  2$, we choose $d_0<\frac{\delta}{\delta + 1}\,\frac{d(x_0,\partial\Omega)}{2k_0}$. Let us call $R_0=\frac{k_0\lambda d_0}{\delta}$. For every radius $0<R<R_0$ and every center $a\in B(x_0, \delta R)$, we will denote $B_{a, R}=B(a,R)$. Condition \eqref{incl} ensures that $B_{a, R}\subset \Omega$ and therefore the quantity $V(\cdot, B_{a, R})$ is well defined.

\smallskip

We first localize the problem. Let $r \in (0, d_0)$ and consider $\eta\in C^\infty_0(\Omega)$ be such that $\chi_{B_r}\leq \eta\leq \chi_{\lambda B_r}$ and $\|D\eta\|_\infty\leq \frac{C_1}{(\lambda-1)r}$, for some universal constant $C_1$, and set $h=\eta f$. Then
$$
h_{\zbar} = \cH(z, h_{z}) + G
$$ 
where
$$G = f \,\eta_{\zbar} + \eta\,\cH(z, f_{z}) + \eta g - \cH(z, h_{z}).$$

The second step consists of freezing the coefficients in $B_{a, R}$. This is done with the help of an autonomous equation, by setting 
$$\cH_{B_{a, R}}(w)=\fint_{B_{a, R}}\cH(z,w)\,dm(z), \qquad w\in\C.$$
This function $\cH_{B_{a, R}}$ defines an autonomous, $k$-elliptic Beltrami functional. Call $F$ the unique solution of the following autonomous Riemann-Hilbert problem
$$
\begin{cases}
F_{\zbar} = \cH_{B_{a, R}}(F_{z}),& z \in B_{a, R}\\ \Re\, F= \Re\, h, & z \in \partial B_{a, R}.\end{cases}
$$
Such solution $F$ exists and is unique and moreover it belongs to $C^{1,1/K}_{\mathrm{loc}}$, see Proposition \ref{splitting} and Corollary \ref{holder1K}. 
More precisely, for every $0<\delta<1$ we have the following Schauder estimate
\begin{equation}\label{schauder-eq}
\|F_{\zbar}-(F_{\zbar})_{B_{a, \delta R}}\|_{L^2(B_{a, \delta R})}\leq C_2(K)\,\delta^{1 + \frac1K}\,\|Dh\|_{L^2(B_{a, R})},
\end{equation}
for some constant $C_2(K)$ that only depends on $K$.

For every number $t\in(2,q)$ we have 
$$\aligned
\|&(h-F)_{\zbar}\|_{L^2(B_{a, R})} \\
&=\|\cH(z, h_{z}) + G - \cH_{B_{a, R}}(F_{z})\|_{L^2(B_{a, R})} \\
&\leq\|\cH(z, h_{z})-\cH_{B_{a, R}}(h_{z})\|_{L^2(B_{a, R})}\\
&\qquad+\|\cH_{B_{a, R}}(h_{z}) - \cH_{B_{a, R}}(F_{z})\|_{L^2(B_{a, R})}+\|G\|_{L^2(B_{a, R})} \\
&\leq\||h_{z}|\,V(\cdot, B_{a, R})\|_{L^2(B_{a, R})}+k\| h_{z}-F_{z}\|_{L^2(B_{a, R})} +\|G\|_{L^2(B_{a, R})} \\
&\leq\|h_{z}\|_{L^t(B_{a, R})}\,\|V(\cdot, B_{a, R})\|_{L^{\frac{2t}{t-2}}(B_{a, R})}\\
&\qquad +k\|  (h- F)_{\zbar}\|_{L^2(B_{a, R})}+\|G\|_{L^2(B_{a, R})} \\
\endaligned$$
whence
\begin{equation}\aligned
\|(h-F)_{\zbar}\|_{L^2(B_{a, R})} \leq &\ \frac{1}{1-k}\|h_{z}\|_{L^t(B_{a, R})}\,\|V(\cdot, B_{a, R})\|_{L^{\frac{2t}{t-2}}(B_{a, R})}\\&\ +\frac{1}{1-k}\,\|G\|_{L^2(B_{a, R})} .
\endaligned
\end{equation}
We use this estimate to control the local Fefferman-Stein maximal function of $h_{\zbar}$. Recall that this maximal function is defined by
$$\cM^\sharp_{2,R_0} (u) (x)=\sup_{0<R<R_0}\left(\fint_{B(x,R)}|u-u_{B(x,R)}|^2\right)^\frac12.$$ 
First, with the help of the Schauder estimate for $F$ \eqref{schauder-eq}, we have for any $0<\delta<1$ that
$$\aligned
\|h_{\zbar} -& (h_{\zbar})_{B_{a, \delta R}}\|_{L^2(B_{a, \delta R})}\\
&\leq 2 \|h_{\zbar} - F_{\zbar}\|_{L^2(B_{a, \delta R})}+\|F_{\zbar} - (F_{\zbar})_{B_{a, \delta R}}\|_{L^2(B_{a, \delta R})}\\
&\leq \frac{2}{1-k}\,\|h_{z}\|_{L^t(B_{a, R})}\,\|V(\cdot, B_{a, R})\|_{L^{\frac{2t}{t-2}}(B_{a, R})} \\
&\qquad +\frac{2}{1-k}\,\|G\|_{L^2(B_{a, R})}+C_2(K)\,\delta^{1 + \frac1K}\,\|Dh\|_{L^2(B_{a, R})}.
\endaligned$$
We now multiply by $\frac1{\delta R}$ and take supremum in $R$ over $(0,R_0)$. We obtain
$$
\aligned
\cM^\sharp_{2,\delta R_0}(h_{\zbar})(a)
&\leq  \frac{2}{\delta(1-k)}\,\cM_t(h_{z})(a)\,\sup_{0<R<R_0}\|V(\cdot, B_{a, R})\|_{L^{\frac{2t}{t-2}}(B_{a, R})}\\
&\qquad +\frac{2}{\delta(1-k)}\,\cM_2(G)(a) 
+C_2(K)\,\delta^\frac1K\,\cM_2(Dh)(a),
\endaligned
$$
where $\cM_t (u)(x)=\cM(u^t)(x)^\frac1t$ and $\cM$ is the Hardy-Littlewood maximal function. Now, we raise to the power $q$, integrate with respect to $a$ over the set $B(x_0, \delta R_0)$ and take power $\frac1q$. We obtain
$$
\aligned
&\|\cM^\sharp_{2,\delta R_0}(h_{\zbar})\|_{L^q(B(x_0, \delta R_0))}\\
&\leq  \frac{2}{\delta(1-k)}\,\|\cM_t(h_{z})\|_{L^q(B(x_0, \delta R_0))}\!\!\sup_{a\in B(x_0, \delta R_0)}\sup_{0<R<R_0}\|V(\cdot, B_{a, R})\|_{L^{\frac{2t}{t-2}}(B_{a, R})}\\
&\qquad+\frac{2}{\delta(1-k)}\,\|\cM_2(G)\|_{L^q(B(x_0, \delta R_0))} 
+C_2(K)\,\delta^\frac1K\,\|\cM_2(Dh)\|_{L^q(B(x_0, \delta R_0))}.
\endaligned
$$
Since $\delta R_0= k_0\lambda d_0>\lambda d_0$ and $\supp(h)\subset \lambda B_r \subset \lambda B_{d_0}$, we can use at the right hand side the boundedness of $\cM$ on the whole plane, 
\begin{equation}\label{maximals}
\aligned
&\|\cM^\sharp_{2,\delta R_0}(h_{\zbar})\|_{L^q(B(x_0, \delta R_0))}\\
&\leq  \frac{2}{\delta(1-k)}\,\|\cM_t\|_{L^q(\C)}\,\|h_{z}\|_{L^q(\C)}\!\!\sup_{a\in B(x_0, \delta R_0)}\sup_{0<R<R_0}\|V(\cdot, B_{a, R})\|_{L^{\frac{2t}{t-2}}(B_{a, R})}\\
&\qquad+\frac{2}{\delta(1-k)}\,\|\cM_2\|_{L^q(\C)}\,\|G\|_{L^q(\C)} 
+C_2(K)\,\delta^\frac1K\,\|\cM_2\|_{L^q(\C)}\,\|Dh\|_{L^q(\C)}.
\endaligned
\end{equation}

We now look at the last term on the right hand side of \eqref{maximals}. The boundedness of the Beurling transform $\mathcal{S}$ on $L^q(\C)$ allows us to write
$$\aligned
C_2(K)\,\delta^\frac1K\,\|\cM_2\|_{L^q(\C)}\,&\|Dh\|_{L^q(\C)}\\&\leq C_2(K)\,(1+\|\mathcal{S}\|_{L^q(\C)})\,\delta^\frac1K\,\|\cM_2\|_{L^q(\C)}\,\|h_{\zbar}\|_{L^q(\C)}.\endaligned$$
At this point we fix $\delta>0$ small enough so that we attain
\begin{equation}\label{choosedelta}
C_2(K)\,\delta^\frac1K\,\|\cM_2\|_{L^q(\C)}\,\|Dh\|_{L^q(\C)}\leq \epsilon\|h_{\zbar}\|_{L^q(\C)}
\end{equation}
for $\epsilon = \epsilon(q) > 0$ to be chosen later.
Remarkably, the chosen value of $\delta$ depends on $K$ and $q$ and nothing else. 

Now, the term on the left hand side of \eqref{maximals} can also be bounded from below with the help of the following inequality for the local Fefferman-Stein maximal function
$$
\|\cM^\sharp_{2,\delta R_0}(h_{\zbar})\|_{L^q(B(x_0, \delta R_0))}\geq C_3(q)\,\| h_{\zbar} \|_{L^q(B(x_0,\frac{ \delta R_0}{k_0}))}
$$
which holds for a number $k_0\geq 2$ (a proof of this fact follows as in  \cite[Lemma 2.4]{Kinnunen-Zhou}). Thus $k_0$ is fixed at this point. We now choose $\epsilon = \frac{1}{4}C_3(q)$. 

Recall that $\lambda B_r \subset B(x_0, \frac{ \delta R_0}{k_0})$, since $\delta R_0= k_0\lambda d_0$. Now, using \eqref{choosedelta},  equation \eqref{maximals} reduces to
\begin{equation}\label{maximals2}
\aligned
&\frac34 C_3(q)\,\|h_{\zbar}\|_{L^q(\C)}\\
&\leq  \frac{2}{\delta(1-k)}\,\|\cM_t\|_{L^q(\C)}\,\|h_{z}\|_{L^q(\C)}\,\sup_{a\in B(x_0, \delta R_0)}\sup_{0<R<R_0}\|V(\cdot, B_{a, R})\|_{L^{\frac{2t}{t-2}}(B_{a, R})}\\
&\qquad+\frac{2}{\delta(1-k)}\,\|\cM_2\|_{L^q(\C)}\,\|G\|_{L^q(\C)}. 
\endaligned
\end{equation}

We now look at the term with the supremum. Our assumption \eqref{incl} tells us that $B(a,\delta R_0)$ is always a subset of the compact set
$$\mathcal{K}=\overline{B\left(x_0, \frac{\delta \,d(x_0, \partial\Omega)}{\delta+1}\right)}.$$
Also the $k$-Lipschitz property of $\cH$ says that $V(\cdot, B_{a, R})\leq k$. Hence
$$\aligned
\sup_{a\in B(x_0, \delta R_0)}\sup_{0<R<R_0}&\|V(\cdot, B_{a, R})\|_{L^{\frac{2t}{t-2}}(B_{a, R})}\\
&\leq C_4(K,t)\,\sup_{a\in \mathcal{K}}\sup_{0<R<R_0}\|V(\cdot, B_{a, R})\|_{L^1(B_{a, R})}^\frac{t-2}{2t}\\
&\leq \frac14\,\frac{C_3(q)}{\frac{2}{\delta(1-k)}\,\|\cM_t\|_{L^q(\C)}}.
\endaligned$$
Above the last inequality follows by our $\mathrm{VMO}$ assumption \eqref{locunifvmo} on $\cH$. Indeed, since $R_0=\frac{k_0\lambda d_0}{\delta}$, we are free to choose $d_0$ small enough so that the last inequality above holds. Especially, the choice of $d_0$ depends on $\mathcal{K}$, $K$, $t$, $q$ and $\mathrm{VMO}$ property of $\cH$. As a consequence, \eqref{maximals2} gets converted into
\begin{equation}\label{maximals3}
\aligned
\frac12 C_3(q)\,\|h_{\zbar}\|_{L^q(\C)}
&\leq\frac{2}{\delta(1-k)}\,\|\cM_2\|_{L^q(\C)}\,\|G\|_{L^q(\C)} 
\endaligned
\end{equation}
or simply
$$
\|h_{\zbar}\|_{L^q(\C)}\leq C_5(K,q) \,\|G\|_{L^q(\C)}. 
$$
The boundedness of the Beurling transform on $L^q(\C)$ and the fact that $h\in W^{1,q}(\C)$ allows us to conclude that
$$
\|D h\|_{L^q(\C)}\leq C_6(K,q) \,\|G\|_{L^q(\C)}. 
$$
On the other hand, we can use the definition of $G$ and the fact that $\supp(\eta)\subset \lambda B_r$ to obtain that
$$\aligned
G
&= f \,\eta_{\zbar} + \eta g + \cH(z, \eta \,f_z)-\cH(z, \eta\, f_{z}+ f\,\eta_{z})\\ & \qquad +\eta \cH(z,f_{z})  - \cH(z, \eta \,f_{z})\\
&= f \,\eta_{\zbar} + \eta g + \cH(z, \eta \,f_{z})-\cH(z, \eta\, f_{z}+ f\,\eta_{z})\\ &\qquad +\chi_{\lambda B_r}(\eta \cH(z,f_{z})  - \cH(z, \eta \,f_{z}))\\
&= f \,\eta_{\zbar} + \eta g + \cH(z, \eta \,f_{z})-\cH(z, \eta\, f_{z}+ f\,\eta_{z})\\
&\qquad+\chi_{\lambda B_r}(\eta-1) \cH(z,f_{z}) +\chi_{\lambda B_r}(\cH(z,f_{z}) - \cH(z, \eta \,f_{z}))
\endaligned$$
and thus
$$\aligned
|G|&\leq  |f \,\eta_{\zbar}| + |\eta g| + k\,|f\,\eta_{z}|+2k\,|\eta-1|\,|f_{z}|\,\chi_{\lambda B_r}.
\endaligned$$
After recalling that $\eta=1$ on $\chi_{B_r}$ we are left with
$$\aligned
\|G\|_{L^q(\C)}\leq&\, \|f\, \eta_{\zbar}\|_{L^q(\lambda B_r)}+\|g \eta\|_{L^q(\lambda B_r)}\\&\, +k \|f \,\eta_{z}\|_{L^q(\lambda B_r)}+ 2k\| f_{z}\|_{L^q(\lambda B_r\setminus B_r)}.\endaligned
$$
Therefore
$$\aligned
\|D h\|_{L^q(\C)}
&\leq C_6(K,q)\left(\|f\, D\eta\|_{L^q(\lambda B_r)}+\|g \eta\|_{L^q(\lambda B_r)}+ 2k\|f_{z}\|_{L^q(\lambda B_r\setminus B_r)}\right)\\
&\leq C_6(K,q)\,\|f\, D\eta\|_{L^q(\lambda B_r)}+C_6(K,q)\,\|g \eta\|_{L^q(\lambda B_r)}\\& \quad \, + C_7(K,q)\,\|D f\|_{L^q(\lambda B_r\setminus B_r)}. 
\endaligned$$
Since $h=\eta f$, this implies that
$$\aligned
\|D f\|_{L^q(B_r)}
&\leq \|\eta D f\|_{L^q(\C)}\\
&\leq \|Dh\|_{L^q(\C)}+\|f \, D\eta \|_{L^q(\C)}\\
&\leq (C_6 +1) \,\|f\, D\eta\|_{L^q(\lambda B_r)}+C_6\,\|g \eta\|_{L^q(\lambda B_r)}\\&\quad \, +C_7\,\|D f\|_{L^q(\lambda B_r\setminus B_r)}.
\endaligned$$
It just remains to fill the hole, that is, to add the term  $C_7\,\|D f\|_{L^q(B_r)}$ at both sides to obtain
$$\aligned
(C_7+1)\|D f\|_{L^q(B_r)}\leq\, & (C_6+1) \,\|f\, D\eta\|_{L^q(\lambda B_r)}+C_6\,\|g \eta\|_{L^q(\lambda B_r)}\\&+ C_7\,\|D f\|_{L^q(\lambda B_r)} 
\endaligned $$
that is
$$\aligned
\|D f\|_{L^q(B_r)}
&\leq \tau\,\|D f\|_{L^q(\lambda B_r)} +  C_8(K, q)\,\|f\, D\eta\|_{L^q(\lambda B_r)}\\& \quad \, +C_9(K, q)\,\|g \eta\|_{L^q(\lambda B_r)} \\
&\leq \tau\,\|D f\|_{L^q(\lambda B_r)} +  \frac{C_{10}(K, q)}{(\lambda-1)r}\,\|f\|_{L^q(\lambda B_r)}+C_9\,\|g \|_{L^q(\lambda B_r)}
\endaligned$$
with $\tau=\frac{C_7}{C_7+1} < 1$. The restrictions on $\lambda$ are such that 
$$r<\lambda r< 2 r.$$
 Thus, a classical iteration argument gives (see \cite[Lemma 6.1, p. 191]{Giusti})
$$\aligned
\|D f\|_{L^q(B_r)}
&\leq  \frac{C_{11}(K, q)}{r}\,\|f\|_{L^q(2 B_r)}+C_{11}(K, q)\,\|g \|_{L^q(2 B_r)}
\endaligned$$
as desired.
\end{proof}

\end{document}